\documentclass[12pt,reqno]{amsart}
\usepackage{amssymb}
\usepackage{mathrsfs,mathtools}
\usepackage{palatino}
\usepackage{bbm}
\usepackage{pictex}
\usepackage{amscd}
\usepackage{latexsym}
\usepackage{amssymb}
\usepackage{enumitem}
\usepackage{eucal}
\usepackage{MnSymbol}
\usepackage{eufrak}
\usepackage{verbatim}
\usepackage{bbm}
\usepackage{hyperref}
\hypersetup{colorlinks,linkcolor=blue,urlcolor=cyan,citecolor=red}
\usepackage{pictex}
\usepackage{amscd}
\usepackage{latexsym}
\usepackage{amssymb}
\usepackage{eucal}
\usepackage{eufrak}
\usepackage{verbatim}

\newcommand{\FF}{\mathbb{F}}

\newcommand{\NN}{\mathbb{N}}

\newcommand{\ZZ}{\mathbb{Z}}

\newcommand{\fgl}{\mathfrak{gl}}

\newcommand{\kk}{\mathbbm{k}}

\newcommand{\gl}{\mathfrak{gl}}
\newcommand{\fs}{\mathfrak{s}}

\newcommand{\yp}{{\rm Y}^{[p]}}
\newcommand{\zpl}{Z^{[p]}(\lambda(u))}

\DeclareMathOperator{\Char}{char}

\DeclareMathOperator{\ev}{ev}
\DeclareMathOperator{\Span}{Span}

\DeclareMathOperator{\id}{id}

\DeclareMathOperator{\Tab}{Tab}

\DeclareMathOperator{\Y}{Y}

\numberwithin{equation}{section}
\newtheorem{Theorem}{Theorem}[section]
\newtheorem{Lemma}[Theorem]{Lemma}

\newtheorem{Proposition}[Theorem]{Proposition}
\newtheorem{Remark}[Theorem]{Remark}

\theoremstyle{Theorem}

\newtheorem*{thm*}{Theorem}
\newtheorem*{thm**}{Corollary}
\newtheorem*{thm***}{Theorem B}
\theoremstyle{remark}

\newtheorem*{Definition}{Definition}

\numberwithin{equation}{section}

\newtheorem{theorem}{\textbf{Theorem}}[section]
\newtheorem{lemma}[theorem]{\textbf{Lemma}}

\newtheorem{proposition}[theorem]{\textbf{Proposition}}
\newtheorem{corollary}[theorem]{\textbf{Corollary}}

\numberwithin{equation}{section}

\begin{document}
	\title[Modular shifted super Yangian]{Representations of the modular shifted super Yangian $Y_{1|1}(\sigma)$}
	\author{Hao Chang, Ruiying Hou \lowercase{and} Hui Wu}
	\address[H. Chang]{School of Mathematics and Statistics,
		and Hubei Key Laboratory of Mathematical Sciences,
		Central China Normal University, Wuhan 430079, China}
	\email{chang@ccnu.edu.cn}
	\address[R. Hou]{School of Mathematics and Statistics,
		Central China Normal University, Wuhan 430079, China}
	\email{houry1998@163.com}
	\address[H. Wu]{School of Mathematics and Statistics, 
		Central China Normal University, Wuhan 430079, China}
	\email{1293108736@qq.com}
	
	\subjclass[2020]{Primary 17B37, 17B50}
	
	\date{\today}
	
	\makeatletter
	
	\makeatother
	\maketitle
	\begin{abstract}
		Let $Y_{1|1}$ be the Yangian associated to the general linear Lie superalgebra $\mathfrak{gl}_{1|1}$,
		defined over an algebraically closed field $\mathbbm{k}$ of characteristic $p>2$.
		In this paper, 
		we classify the finite dimensional irreducible representations of the restricted 
		super Yangian $Y_{1|1}^{[p]}$ and the restricted truncated shifted super Yangian $Y_{1|1,\ell}^{[p]}(\sigma)$.
	\end{abstract}
	\section{Introduction}
	The Yangian $Y_{m|n}$ associated with the Lie superalgebra $\mathfrak{gl}_{m|n}$ over the complex
	field is a deformation of the universal enveloping algebra $U(\mathfrak{gl}_{m|n}[t])$ in the class of Hopf algebras.
	The original definition was given by Nazarov \cite{Na91} in terms of the {\it RTT presentation}. 
	A {\it Drinfeld-type presentation} corresponding to a standard
	Borel subalgebra was obtained by Gow \cite{Gow07},
	extending the results of Drinfeld \cite{Drin88} and
	Brundan and Kleshchev \cite{BK05} on the Yangian $Y_n$.
	The finite dimensional irreducible representations of $Y_{m|n}$ were classified
	by Zhang \cite{Zhang95, Zhang96}
	in a way similar to the representations of the Yangians associated with the simple Lie algebras, as given by Tarasov \cite{Ta85} and Drinfeld \cite{Drin88} (see also \cite{Mol07}).
	Recently,
	the finite dimensional irreducible 
	representations of the orthosymplectic super Yangians were further investigated
	in a series of papers \cite{Mol23-1, Mol23-2, Mol25, MR24}.
	
	In \cite{BBG13},
	Brown, Brundan and Goodwin introduced the {\it shifted super Yangians} $Y_{1|1}(\sigma)$ for $\fgl_{1|1}$. 
	This parallels the definition of {\it shifted Yangians} from \cite{BK06}.
	In particular,
	they established the connection between the {\it principal $W$-superalgebra} $W_{m|n}$ and the {\it truncated shifted super Yangian} $Y_{1|1,\ell}(\sigma)$.
	This allowed them to
	make an extensive study of the representation theory of $W_{m|n}$,
	see also \cite[Chapter 7]{BK08} for the classification of the finite dimensional irreducible representations of the shifted Yangians.
	Meanwhile,
	Peng \cite{Peng21} obtained a super analogue of the main result of \cite{BK06} for type $A$ Lie superalgebras in full generality.
	
	Let $\kk$ be an algebraically closed field of positive characteristic $p>0$.
	Brundan and Topley \cite{BT18} developed the theory of the shifted Yangian $Y_n(\sigma)$ over $\kk$.
	They defined the Yangian $Y_n$ over $\kk$ by the usual RTT presentation,
	and extended the {\it Drinfeld-type presentation} (\cite{Drin88, BK05}) from characteristic zero to characteristic $p$.
	In \cite{CH23},
	we use Nazarov's approach in the definition of the modular super Yangian and 
	gave the modular analogue of the Drinfeld-type presentation of the super Yangian $Y_{m|n}$.
	
	In the current article,
	we initiate a study of representations of the shifted super Yangian $Y_{1|1}(\sigma)$ over $\kk$.
	As a first step towards developing the modular representation theory of super Yangians,
	we first investigate the finite-dimensional irreducible representations of the restricted Yangian  $Y_{1|1}^{[p]}$ in detail.
	Over complex field,
	Zhang \cite{Zhang95} developed the representation theory of the super Yangian $Y_{1|1}$.
	However, when working over $\kk$, his approach cannot
	work in characteristic $p$.
	We classify the finite dimensional irreducible $Y_{1|1}^{[p]}$-modules by adapting the methods from \cite{CHT25} to the super setting.
	Motivated by the results of \cite{BBG13},
	we study the representations of the {\it restricted truncated shifted super Yangian} $Y_{1|1,\ell}^{[p]}(\sigma)$ further (see subsection \ref{section name:Shifted super Yangains}).
	As mentioned in Remark \ref{remark last},
	we expect that this result will be of importance in the classification of  certain irreducible represetations of the modular Lie superalgebra $\fgl_{m|n}$.
	
	The article is organized as follows.
	In Section \ref{secname : restricted super Yangians},
	we recall some preliminaries about the modular super Yangian $Y_{1|1}$ and its $p$-center.
	In particular, Theorem \ref{thm: yp hopf} shows that the Hopf structure descends to $Y_{1|1}^{[p]}$.
	There is an appendix at the end of the paper, 
	which provides a detailed proof of Proposition \ref{prop: delta send p to p}.
	In Section \ref{section name: rep of yp}, we define the baby Verma modules for $Y_{1|1}^{[p]}$ and prove that every finite dimensional irreducible representation is isomorphic to
	the simple head of some baby Verma module (Theorem \ref{thm: fd irr iso lplambda}).
	We also give the necessary and sufficient condition for an irreducible representation to be finite dimensional in terms of modular Drinfeld polynomials (Theorem \ref{theorem: lplambda fd condition}).
	Section \ref{section name: restricted shifted super Yangian} is devoted to further study about the irreducible representations of $Y_{1|1,\ell}^{[p]}(\sigma)$ (Theorem \ref{thm:classification of simple restricted ypi-module}).

	\bigskip
	\emph{Throughout this paper, $\kk$ denotes an algebraically closed field of characteristic $\Char(\kk)=:p>2$.}
	
	\section{Restricted Yangians}\label{secname : restricted super Yangians}
	\subsection{Modular Yangian $Y_{1|1}$}
	The super Yangian $Y_{m|n}$~associated to the general linear Lie superalgebra $\gl_{m|n}$ over the complex
	field was defined by Nazarov \cite{Na91} in terms of the {\it RTT presentation}.
	The {\it Drinfeld-type presentation} was found by Gow in \cite{Gow07}.
	In this paper, we only need the special case of $m=n=1$.
	For its definition, we fix a {\it standard parity} $|1|=0$ and $|2|=1$.
	Following \cite{Na91} (see also \cite{CH23}),
	the super Yangian associated to the general linear Lie superalgebra $\mathfrak{gl}_{1|1}$, denoted by $\Y:=Y_{1|1}$,
	is the associated superalgebra over $\kk$ with the RTT generators $\{ t_{i,j}^{(r)};1 \leq i,j \leq 2, r>0 \}$ subject the following relations:
	\begin{align}\label{def relation-coeff}
		[t_{i,j}^{(r)},t_{k,l}^{(s)}]=(-1)^{ \left|i\right| \left|j\right|+\left|i\right| \left|k\right|+\left|j\right| \left|k\right|} \sum_{t=0}^{{\rm min}\{r,s\}-1}(t_{k,j}^{(t)}t_{i,l}^{(r+s-1-t)}-t_{k,j}^{(r+s-1-t)}t_{i,l}^{(t)}),
	\end{align}
	where the parity $t_{i,j}^{(r)}$ is defined by $|i|+|j|~({\rm mod}~2)$, 
	and the bracket is understood as the supercommutator. 
	By convention, we set $t_{i,j}^{(0)}:=\delta_{i,j}$.
	
	The element $t_{i,j}^{(r)}$ is called an {\it even} ({\it odd}, respectively) element
	if its parity is $0$ ($1$, respectively). We define the formal power series
	\begin{align*}
		t_{i,j}(u):=\sum_{r \geq 0}t_{i,j}^{(r)}u^{-r} \in \Y[[u^{-1}]].
	\end{align*}
	Then these power series for all $1\leq i,j\leq 2$ can be collected together into a single matrix
	\begin{align*}
		T(u):=\begin{pmatrix}
			t_{1,1}(u) & t_{1,2}(u)\\
			t_{2,1}(u) & t_{2,2}(u)
		\end{pmatrix}.
	\end{align*}
	It is easily seen that, in terms of the generating series, the initial defining relation \eqref{def relation-coeff} may be rewritten as follows:
	\begin{align}\label{def relation-series}
		(u-v)[t_{i,j}(u),t_{k,l}(v)]=(-1)^{ \left|i\right| \left|j\right|+\left|i\right| \left|k\right|+\left|j\right| \left|k\right|} (t_{k,j}(u)t_{i,l}(v)-t_{k,j}(v)t_{i,l}(u)).
	\end{align}
	
	We need another set of generators for $\Y$ called {{\it Drinfeld generators}. 
		To define these, we consider the Gauss factorization 
		$$T(u) = F(u)D(u)E(u).$$
		This defines power series $d_i(u), e(u), f(u) \in \Y[[u^{-1}]]$ such that
		\begin{align*}
			D(u)=\begin{pmatrix}
				d_1(u) & 0\\
				0 & d_2(u)
			\end{pmatrix},
			E(u)=\begin{pmatrix}
				1 & e(u)\\
				0 & 1
			\end{pmatrix},
			F(u)=\begin{pmatrix}
				1 & 0\\
				f(u) & 1
			\end{pmatrix}
		\end{align*}
		Then we have that
		\begin{align}\label{t11t22 drinfeld}
			t_{1,1}(u)= d_1(u),\quad t_{2,2}(u)=f(u)d_1(u)e(u)+d_2(u),
		\end{align}
		\begin{align}\label{t12 t21 drinfeld}
			t_{1,2}(u)= d_1(u)e(u),\quad t_{2,1}(u)=f(u)d_1(u).
		\end{align}
		The Drinfeld generators are the elements $d_i^{(r)}, e^{(r)}$ and
		$f^{(r)}$ of $\Y$ defined from the expansions $d_i(u)=\sum_{r\geq 0}d_i^{(r)}u^{-r}, e(u)=\sum_{r> 0}e^{(r)}u^{-r}$ and $f(u)=\sum_{r>0}f^{(r)}u^{-r}$.
		Also define $d_i^{\prime (r)}$ from the identity $d_i^{\prime}(u)=\sum_{r\geq 0}d_i^{\prime (r)}u^{-r}:=d_i(u)^{-1}$.
		
		Under the assumption the characteristic $p>2$,
		the Drinfeld presentation of the modular Yangian $\Y$ is exactly the same as the one obtained by Gow over the complex field (see \cite[Theorem 3.8]{CH23} and \cite[Theorem 3]{Gow07}).
		The interested reader is referred to \cite{CH26} for the definition of the super Yangians in characteristic $2$.
		
		\begin{Theorem}\label{thm: Drinfeld presentation}
			The super Yangian $\Y$ is generated by the elements $\{d_i^{(r)}, d_i^{\prime (r)}; 1 \leq i \leq 2, r>0\}$ and $\{e^{(r)},f^{(r)}; r>0\}$ subject only to the following relations:
			\begin{align}	
				d_i^{(0)}=1,\quad \sum\limits_{t=0}^{r}d_i^{(t)}d_i^{\prime (r-t)}=\delta_{r,0};
			\end{align}
			\begin{align}\label{bracket dirdjs}
				[d_i^{(r)},d_j^{(s)}]=0;
			\end{align}
			\begin{align}\label{bracket dir esfs}
				[d_i^{(r)},e^{(s)}]=\sum\limits_{t=0}^{r-1}d_i^{(t)}e^{(r+s-1-t)},\quad [d_i^{(r)},f^{(s)}]=-\sum\limits_{t=0}^{r-1}f^{(r+s-1-t)}d_i^{(t)};
			\end{align}
			\begin{align}\label{erfs}
				[e^{(r)},f^{(s)}]=\sum\limits_{t=0}^{r+s-1}d_1^{\prime (t)}d_2^{(r+s-1-t)};
			\end{align}
			\begin{align}\label{bracket eres frfs}
				[e^{(r)},e^{(s)}]=[f^{(r)},f^{(s)}]=0.
			\end{align}	
		\end{Theorem}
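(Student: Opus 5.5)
The plan has two halves. First I check that the stated relations hold in $\Y$. Then I show they are sufficient, by building an abstract algebra $\widehat{\Y}$ from them and proving that the natural map $\widehat{\Y}\to\Y$ is an isomorphism.

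\emph{The relations hold.} I would work with generating series and use \eqref{t11t22 drinfeld}, \eqref{t12 t21 drinfeld} to express $d_1,d_2,e,f$ through the $t_{i,j}(u)$:
\[ d_1(u)=t_{1,1}(u),\quad e(u)=t_{1,1}(u)^{-1}t_{1,2}(u),\quad f(u)=t_{2,1}(u)t_{1,1}(u)^{-1}, \]
\[ d_2(u)=t_{2,2}(u)-t_{2,1}(u)t_{1,1}(u)^{-1}t_{1,2}(u). \]
Each relation is then checked directly from \eqref{def relation-series}, as in Gow's argument. For example, $(u-v)[t_{1,1}(u),t_{1,2}(v)]=t_{1,1}(u)t_{1,2}(v)-t_{1,1}(v)t_{1,2}(u)$ gives $(u-v)[d_1(u),e(v)]=d_1(u)(e(v)-e(u))$. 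Taking coefficients yields \eqref{bracket dir esfs} for $i=1$.

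The remaining cases are handled the same way. The relations involving $d_2$, the relation $[e(u),f(v)]$ in \eqref{erfs}, and the vanishing of $[e(u),e(v)]$ in \eqref{bracket eres frfs} each follow from the RTT relation for the appropriate pair of entries. One also uses the quasideterminant identity $d_2(u)=t_{1,1}(u)^{-1}\cdot\bigl(\text{Berezinian-type}\bigr)$, or a direct manipulation. These identities are purely formal: the only denominators that appear are $u-v$. Hence the characteristic zero computations hold verbatim over $\kk$.

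The one place where $p>2$ enters is \eqref{bracket eres frfs}. Here the RTT relation only gives $2e(u)e(v)$-type identities. More precisely, it gives $[e^{(r)},e^{(s)}]$ as something that must be halved, or $e^{(r)2}$-type terms with a factor $2$. To conclude the bracket vanishes one needs $2$ invertible.

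\emph{The relations suffice.} Let $\widehat{\Y}$ be the algebra defined by the relations of the theorem, with the obvious surjection $\widehat{\Y}\to\Y$; it is surjective because the $t_{i,j}^{(r)}$ are polynomial in the Drinfeld generators via the Gauss factorization. Injectivity is the main obstacle, and I would prove it with a PBW-basis argument.

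\begin{enumerate}[label=\textup{(\arabic*)}]
\item Put the loop filtration on both algebras, with $\deg t_{i,j}^{(r)}=\deg d_i^{(r)}=\deg e^{(r)}=\deg f^{(r)}=r-1$.
\item Using only the defining relations, show that ordered supermonomials in $d_1^{(r)},d_2^{(r)},e^{(r)},f^{(r)}$ span $\widehat{\Y}$. Odd generators appear with exponent at most one, since $e^{(r)2}=\tfrac12[e^{(r)},e^{(r)}]=0$, which again uses $p>2$. Spanning follows by the usual straightening: every commutator lowers filtered degree.
\item Prove these monomials are linearly independent in $\Y$. By \cite{CH23} (the modular RTT PBW theorem), $\operatorname{gr}\Y\cong U(\gl_{1|1}[t])$ over $\kk$. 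The images of $d_i^{(r)},e^{(r)},f^{(r)}$ in $\operatorname{gr}\Y$ are $e_{ii}t^{r-1}$, $e_{12}t^{r-1}$ and $e_{21}t^{r-1}$. So the ordered monomials map to a PBW basis of $U(\gl_{1|1}[t])$ and are independent.
\end{enumerate}

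Hence a spanning set of $\widehat{\Y}$ maps to a basis of $\Y$, and the surjection is an isomorphism.

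I expect the main care to be needed in two places. The first is confirming that the PBW theorem for $\Y$ holds over $\kk$, which I take from the earlier cited work \cite[Theorem 3.8]{CH23}. The second is tracking where $2^{-1}$ is used.
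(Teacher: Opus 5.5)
Your outline is the standard argument: verify the relations from the RTT relation, then show that ordered supermonomials span the algebra $\widehat{\Y}$ defined by the relations and map to linearly independent elements of $\Y$ via the RTT PBW theorem. The paper gives no proof of its own; it defers to exactly this argument in \cite[Theorem 3.8]{CH23} and \cite[Theorem 3]{Gow07}, so your route is essentially the same.

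One justification needs correcting. In the loop filtration $\deg x^{(r)}=r-1$, supercommutators of generators do \emph{not} drop degree. For example, $[d_1^{(1)},e^{(s)}]=e^{(s)}$, and $[e^{(r)},f^{(s)}]$ contains $d_2^{(r+s-1)}+d_1^{\prime(r+s-1)}$, which has loop degree $(r-1)+(s-1)$. This is consistent with your own claim that $\mathrm{gr}\,\Y\cong U(\gl_{1|1}[t])$ is noncommutative. There are two easy fixes:
\begin{itemize}
\item Do the straightening in $\widehat{\Y}$ with the canonical filtration $\deg x^{(r)}=r$. There every bracket in the relations has strictly lower degree, and the filtered pieces are finite dimensional. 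Keep the loop filtration only for the independence step.
\item Keep the loop filtration but induct on word length as well as degree.
\end{itemize}

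Also, the PBW input you need is the RTT PBW theorem, i.e.\ the modular version of \cite[Theorem 1]{Gow07} stated in \cite[Theorem 3.1]{CH23}. It is not \cite[Theorem 3.8]{CH23}: that result is the statement being proved, so citing it here would be circular.
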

		
		Here is the {\it PBW theorem} for $\Y$, see \cite[Theorem 3.11]{CH23}.
		\begin{Theorem}\label{theorem: PBW-drinfeld}
			Ordered supermonomials in the elements
			\begin{align}\label{drinfeld generators}
				\{d_i^{(r)};~1\leq i\leq 2,r>0\}\cup \{e^{(r)},f^{(r)};~r>0 \}
			\end{align}
			taken in any fixed ordering form a basis for $\Y$.
		\end{Theorem}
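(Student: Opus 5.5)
The plan has two halves: the ordered supermonomials span $\Y$, and they are linearly independent. This is the standard strategy of Brundan--Kleshchev and Gow, adapted to characteristic $p>2$. Throughout I use the Drinfeld presentation of Theorem \ref{thm: Drinfeld presentation}.

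\emph{Spanning.} Filter $\Y$ by the loop filtration, putting $t_{i,j}^{(r)}$ (equivalently $d_i^{(r)},e^{(r)},f^{(r)}$) in degree $r-1$. Relations \eqref{bracket dirdjs}--\eqref{bracket eres frfs} show that the supercommutator of two Drinfeld generators of degrees $r-1$ and $s-1$ lies in filtered degree $\le r+s-2$. Hence $\operatorname{gr}\Y$ is supercommutative on the images of the generators. In addition, $(e^{(r)})^2=\tfrac12[e^{(r)},e^{(r)}]=0$, and likewise for $f^{(r)}$; here $2$ is invertible since $p>2$. So any monomial can be reordered into the chosen order modulo lower filtered terms, with odd generators appearing at most once. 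Induction on filtered degree and on length then shows that the ordered supermonomials span $\Y$. By \eqref{t11t22 drinfeld} and \eqref{t12 t21 drinfeld}, the Drinfeld generators generate $\Y$, because the $t_{i,j}^{(r)}$ are polynomial in them.

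\emph{Linear independence.} I would show that the images of the ordered supermonomials in $\operatorname{gr}\Y$ are linearly independent. To do so I compare $\operatorname{gr}\Y$ with the free supercommutative algebra $S$ on generators $x_{i,j}^{(r)}$, which is the (super)symmetric algebra of $\fgl_{1|1}[t]$. There is a surjection $S\to\operatorname{gr}\Y$, and it suffices to prove that this map is injective. In characteristic $0$ one uses evaluation modules $\Y\to U(\fgl_{1|1})$ together with the coproduct to get a family of representations that separates monomials.

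In characteristic $p$ I would instead follow \cite{BT18} and \cite{CH23}. First define $\Y_\ZZ$ over $\ZZ$ via the RTT relations. Next show, using the comultiplication $\Delta(t_{i,j}(u))=\sum_k t_{i,k}(u)\otimes t_{k,j}(u)$ and the homomorphisms $\Y\to U(\fgl_{1|1})^{\otimes m}$, that the PBW monomials in the $t_{i,j}^{(r)}$ are independent over $\ZZ$. Finally base change to $\kk$. The key computation is that the image of the top-degree part of a monomial under $\Y\to U(\fgl_{1|1})^{\otimes m}$ is, to leading order, a symmetrized product of elements $\sum_a e_{i,j}^{[a]}c_a^{r-1}$ with suitable scalars $c_a$. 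These can be separated using the PBW basis of $U(\fgl_{1|1})^{\otimes m}$, which is valid over any ring, by choosing $m$ large.

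Independence for RTT monomials then transfers to the Drinfeld monomials. The Gauss factorization gives $d_1^{(r)}=t_{1,1}^{(r)}$, $e^{(r)}\equiv t_{1,2}^{(r)}$, $f^{(r)}\equiv t_{2,1}^{(r)}$ and $d_2^{(r)}\equiv t_{2,2}^{(r)}$ modulo products of lower terms. So, passing to the associated graded, the two sets of generators are related by a unitriangular change of variables.

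The main obstacle is the independence step in characteristic $p$. The evaluation-module argument has to be carried out over $\ZZ$ with care about divided powers and factors of $2$, and the odd squares must be handled correctly; this is exactly where $p>2$ is used. I would first try to cite the RTT-PBW result over $\kk$ from \cite{CH23}. If that is not available, I would run the $\ZZ$-form argument and reduce mod $p$.
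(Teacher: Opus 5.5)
The overall route is right, and it is what the paper relies on: spanning by reordering, independence of the RTT monomials, then transfer to Drinfeld monomials by triangularity of the Gauss decomposition. The paper itself only cites \cite[Theorem 3.11]{CH23}, which rests on the RTT PBW theorem over $\kk$ (\cite[Theorem 3.1]{CH23}). However, two steps in your sketch are wrong as written.

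\textbf{The filtration.} With $\deg t_{i,j}^{(r)}=r-1$, the bound $r+s-2$ you get for a supercommutator is exactly the sum of the degrees, so nothing drops. For example, $[e^{(1)},f^{(1)}]=d_2^{(1)}-d_1^{(1)}\neq 0$ lies in degree $0$. The loop-graded algebra is $U(\fgl_{1|1}[t])$, which is not supercommutative. So neither ``$\operatorname{gr}\Y$ is supercommutative'' nor the surjection $S(\fgl_{1|1}[t])\twoheadrightarrow\operatorname{gr}\Y$ holds for that filtration.

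You need the canonical filtration $\deg t_{i,j}^{(r)}=r$, so that $\deg d_1^{\prime (t)}\le t$. Then every commutator in the Drinfeld relations lies in degree $\le r+s-1$, the reordering argument works, and $S\twoheadrightarrow\operatorname{gr}\Y$ exists. In that picture the top symbol of $t_{i,j}^{(r)}$ under $\Y\to U(\fgl_{1|1})^{\otimes m}$ is the degree-$r$ element $\sum_{a_1<\cdots<a_r}\sum_{h}\pm x_{i,h_1}^{[a_1]}\cdots x_{h_{r-1},j}^{[a_r]}$. It is not the linear expression $\sum_a e_{i,j}^{[a]}c_a^{r-1}$; that belongs to the loop picture with shifted evaluation maps, which must be compared with $U(\fgl_{1|1}[t])$, not with $S$. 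Choose one picture and stay in it.

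Your triangularity transfer is fine in the canonical picture. Here $\bar e^{(r)}=\bar t_{1,2}^{(r)}+\sum_{a\ge 1}\bar d_1^{\prime (a)}\bar t_{1,2}^{(r-a)}$, and similarly for $\bar f^{(r)}$ and $\bar d_2^{(r)}$. This is a triangular change of free generators of $S$.

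\textbf{The base change.} Linear independence over $\ZZ$ is not preserved by $-\otimes_{\ZZ}\kk$ (think of $px$). What descends is a basis of a free module. Over $\ZZ$ the relations only express $2(t_{1,2}^{(r)})^2$ in terms of other monomials, so the spanning argument needs $\tfrac12$; work over $R=\ZZ[1/2]$.

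The argument is then easier than you expect:
\begin{itemize}
\item Your reordering argument (integral structure constants, one division by $2$) shows that the ordered RTT monomials span $\Y_R$.
\item Their $R$-linear independence follows at once from the characteristic-zero PBW theorem applied in $\Y_R\otimes_R\mathbb{Q}$.
\item Hence $\Y_R$ is free on these monomials.
\item The RTT relations have integer coefficients, so $\Y\cong\Y_R\otimes_R\kk$, and the images of the monomials form a basis.
\end{itemize}
No evaluation modules or divided powers are needed, and the ``main obstacle'' you flag goes away. With these two repairs, plus your triangularity transfer, the proposal is a complete proof.
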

		\subsection{Restricted super Yangian $Y_{1|1}^{[p]}$}\label{subsection name:restricted yangian}
		We proceed to recall the description of the $p$-central elements of $\Y$ given in \cite{CH23}. 
		For $i = 1, 2$, we define
		\begin{equation}\label{def of biu}
			b_i(u):=\sum\limits_{r\geq 0}b_i^{(r)}u^{-r}=
			\begin{cases}
				d_1(u)d_1(u-1) \cdots d_1(u-p+1) & \quad \text{if}~ i=1,\\
				d_2(u)^{-1}d_2(u-1)^{-1} \cdots d_2(u-p+1)^{-1} & \quad \text{if}~ i=2.
			\end{cases}
		\end{equation}
		By \cite[Theorem 4.8]{CH23}
		the elements in
		\begin{align}\label{generators of p center}
			\{b_i^{(rp)};~1 \leq i \leq 2, r>0 \}
		\end{align}
		are algebraically independent, and lie in the center $Z(\Y)$ of $\Y$. 
		The subalgebra they generated is called {\it $p$-center} of $\Y$ and is denoted by $Z_p(\Y)$.
		According to \cite[Corollary 4.10]{CH23}, the super Yangian $\Y$ is free as a module over $Z_p(\Y)$ with basis
		given by the ordered supermonomials in the generators in \eqref{drinfeld generators} in which no exponent is $p$ or more for $d_i^{(r)}$, 
		we refer to such monomials as $p$-restricted supermonomials.
		
		We let $Z_p(\Y)_+$ be the maximal ideal of $Z_p(\Y)$ generated by the elements given in \eqref{generators of p center}. 
		The {\it restricted super Yangian} is defined by 
		\[
		\Y^{[p]}:=\Y/\Y Z_p(\Y)_+.    
		\]
		Clearly, the images in $\Y^{[p]}$ of the $p$-restricted supermonomials
		in the Drinfeld generators of $\Y$ form a basis of $\Y^{[p]}$. 
		When working with $\Y^{[p]}$ we often
		abuse notation by using the same symbols $d_i^{(r)}, e^{(r)}
		, f^{(r)}, t_{i,j}^{(r)}$ to refer to the elements of $\Y$
		and their images in $\Y^{[p]}$.
		\begin{Remark}\label{Remark: generators of p-center}
			In fact, the elements $\{b_i^{(r)};~i = 1, 2, r>0\}$ also belong to the $p$-center $Z_p(\Y)$ (see \cite[Lemma 4.5, Theorem 4.6]{CH23}). 
			Hence $b_i(u)$ is equal to $1$ in $\Y^{[p]}[[u^{-1}]]$.
		\end{Remark} 
		\subsection{Hopf algebra structure}\label{subsection:hopf}
		It is well known \cite[p. 125]{Na91} that $\Y$ is a Hopf superalgebra.
		Its comultiplication $\Delta$ and antipode $S$ are given by 
		\begin{align}\label{comul and antipode}
			\Delta(t_{i,j}(u))=\sum\limits_{k=1}^2t_{i,k}(u)\otimes t_{k,j}(u),\quad S(t_{i,j}(u))=t_{i,j}^{\prime}(u),
		\end{align}
		where $t_{i,j}^{\prime}(u)$ is the $(i,j)$-entry of the matrix $T(u)^{-1}$. The counit send $t_{i,j}(u)\mapsto \delta_{ij}$. 
		Now, we denote by $I_p :=\Y Z_p(\Y)_+$ the ideal defined in Subsection \ref{subsection name:restricted yangian}.
		In this section, we will prove that $I_p$ is a Hopf ideal of $\Y$. 
		Then the restricted super Yangian $\Y^{[p]}$ inherits from $\Y$ the Hopf algebra
		structure.
		
		The following Proposition may be viewed as the super version of \cite[Corollary 2.6]{CHT25}. We defer the proof to the Appendix \ref{section name:appendix}.
		
		\begin{Proposition}\label{prop: delta send p to p}
			For $i=1,2$,
			we have
			\begin{align*}
				\Delta(b_i(u))=b_i(u)\otimes b_i(u).
			\end{align*}
			In particular, $\Delta(I_p)\subseteq I_p \otimes \Y+\Y\otimes I_p$.
		\end{Proposition}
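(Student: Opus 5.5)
Since $b_1(u)=d_1(u)\cdots d_1(u-p+1)$ and $d_1(u)=t_{1,1}(u)$, and $b_2(u)$ is the analogous product of $d_2(u)^{-1}$, I will reduce the statement to a computation with the quantum Berezinian. For $Y_{1|1}$ the Berezinian is
\[
\mathrm{ber}(u)=d_1(u)\,d_2(u)^{-1}
\]
(up to a shift in the second argument, depending on conventions). It is central and group-like: $\Delta(\mathrm{ber}(u))=\mathrm{ber}(u)\otimes\mathrm{ber}(u)$. This is Nazarov's result, and the proof is characteristic-free since it is a formal RTT argument. Moreover $b_1(u)b_2(u)=\prod_{k=0}^{p-1}\mathrm{ber}(u-k)$ by commutativity of the $d$'s, so $b_1b_2$ is group-like. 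It therefore suffices to prove $\Delta(b_1(u))=b_1(u)\otimes b_1(u)$; the statement for $b_2$ then follows by multiplying by the inverse of $\Delta(b_1(u))$.

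For $b_1$, I plan to compute $\Delta(t_{1,1}(u))=t_{1,1}(u)\otimes t_{1,1}(u)+t_{1,2}(u)\otimes t_{2,1}(u)$ and then take the ordered product
\[
\Delta(b_1(u))=\prod_{k=0}^{p-1}\bigl(d_1(u-k)\otimes d_1(u-k)+d_1(u-k)e(u-k)\otimes f(u-k)d_1(u-k)\bigr).
\]
Following \cite[Corollary 2.6]{CHT25}, I want to show that all cross terms vanish. The tool is the relation \eqref{def relation-series} specialised to $v=u-1$, which gives a ``fusion'' identity: it rewrites products like $t_{1,2}(u)t_{1,1}(u-1)$ in terms of $t_{1,1}(u)t_{1,2}(u-1)$. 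The odd elements satisfy $e(u)e(v)=-e(v)e(u)$ with $(u-v)$-type corrections, so repeated $e$'s at consecutive shifts combine, and $e(u)e(u-1)$ collapses to a multiple of a single product. Concretely, I will move all $d_1$ factors to the left in the first tensor factor and to the right in the second, using \eqref{bracket dir esfs} in the series form $d_1(u)e(v)d_1(u)^{-1}=\frac{(u-v-1)e(v)+e(u)}{u-v}$ (suitably normalised). The aim is to express each term containing $m\ge1$ factors of $e\otimes f$ as a total coefficient that is a polynomial in the shifts and equals $\binom{p}{m}$-like sums, which vanish mod $p$. An alternative route I would try in parallel is to prove the identity for generic $p$-tuples, namely that $\prod_k T(u-k)$ restricted to the $(1,1)$ entry behaves like a quantum minor up to the nilpotent odd part. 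Because $e^{(r)}$ squares to zero, at most one $e\otimes f$ term can survive at each shift index, and the combinatorial sum over which positions carry it produces a factor $\sum_{k=0}^{p-1}(\ldots)$. I would evaluate this sum using $\prod(u-k)=u^p-u$ and telescoping.

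Once both group-like identities hold, the ``in particular'' follows. The coefficients satisfy $\Delta(b_i^{(r)})=\sum_{s}b_i^{(s)}\otimes b_i^{(r-s)}$. Each term with $r>0$ has either $s>0$ or $r-s>0$, and by Remark \ref{Remark: generators of p-center} all $b_i^{(s)}$ with $s>0$ lie in $Z_p(\Y)_+$. Hence $\Delta(Z_p(\Y)_+)\subseteq I_p\otimes\Y+\Y\otimes I_p$. Since $\Delta$ is an algebra homomorphism and $I_p=\Y Z_p(\Y)_+$, the inclusion for $I_p$ follows.

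The main obstacle is showing that the cross terms in $\Delta(b_1(u))$ cancel in characteristic $p$. I would first try to handle it by proving that $d_1(u)\cdots d_1(u-p+1)$ equals the $(1,1)$ entry of the fused product $T(u)T(u-1)\cdots T(u-p+1)$ modulo the ideal of terms vanishing in characteristic $p$.
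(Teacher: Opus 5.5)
\textbf{There is a genuine gap.} Your reductions are sound. The passage from the group-like identities to $\Delta(I_p)\subseteq I_p\otimes Y+Y\otimes I_p$ is the standard one. Deducing $i=2$ from $i=1$ via the central group-like series $d_1(u)d_2(u)^{-1}$ is a legitimate alternative to the paper's argument, which uses the isomorphism $\zeta\colon t_{i,j}(u)\mapsto t'_{3-i,3-j}(u)$ and $\Delta=(\zeta\otimes\zeta)\circ P\circ\Delta\circ\zeta$. For $\mathfrak{gl}_{1|1}$ you need not rely on Nazarov's proof being ``characteristic-free''. With $t'_{2,2}=d_2^{-1}$, $t'_{1,2}=-e\,d_2^{-1}$ and $t'_{2,1}=-d_2^{-1}f$, the cross terms of $\Delta(t_{1,1}(u)t'_{2,2}(u))$ reduce to $d_1ed_2^{-1}\otimes[f,d_1d_2^{-1}]$ and a term with first factor $-d_1e(u)^2d_2^{-1}$. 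Both vanish, since $d_1(u)d_2(u)^{-1}$ is central (the relations of $d_1$ and $d_2$ with $e,f$ coincide) and $e(u)^2=0$. But this only reduces the proposition to $\Delta(b_1(u))=b_1(u)\otimes b_1(u)$, which is its entire content. For that step you give no argument; you call it the main obstacle yourself.

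The heuristics you list do not produce the cancellation, and some point the wrong way.
\begin{itemize}
\item The series $e(u)$ and $e(v)$ anticommute exactly, since $[e^{(r)},e^{(s)}]=0$. The relation with $(u-v)$-corrections is the one for $t_{1,2}(u)=d_1(u)e(u)$, namely $(u-v-1)t_{1,2}(u)t_{1,2}(v)+(u-v+1)t_{1,2}(v)t_{1,2}(u)=0$. Setting $v=u-1$ gives the exact identity $t_{1,2}(u-1)t_{1,2}(u)=0$ (using $2\neq0$).
\item That identity is what kills every term with two or more factors $t_{1,2}\otimes t_{2,1}$, once the product is ordered suitably. These terms vanish identically. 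It is neither $(e^{(r)})^2=0$ nor a binomial coefficient mod $p$.
\item The single-cross terms need no telescoping with $u^p-u$. The fusion identities $t_{1,1}(u-1)t_{1,2}(u)=t_{1,2}(u-1)t_{1,1}(u)$, $t_{1,1}(u)t_{2,1}(u-1)=t_{2,1}(u)t_{1,1}(u-1)$ and $(k+1)t_{2,1}(u)t_{1,1}(u-k)=k\,t_{1,1}(u-k)t_{2,1}(u)+t_{2,1}(u-k)t_{1,1}(u)$ are all specialisations of the RTT relation. By induction on $n$ they give
\begin{align*}
\Delta\bigl(t_{1,1}(u-n+1)\cdots t_{1,1}(u)\bigr)={}&t_{1,1}(u-n+1)\cdots t_{1,1}(u)\otimes t_{1,1}(u)\cdots t_{1,1}(u-n+1)\\
&+n\,t_{1,2}(u-n+1)t_{1,1}(u-n+2)\cdots t_{1,1}(u)\otimes t_{1,1}(u)\cdots t_{1,1}(u-n+2)t_{2,1}(u-n+1).
\end{align*}
The cross term simply has coefficient $n$, which vanishes at $n=p$.
\end{itemize}
This exact bookkeeping, or some equivalent of it, is the missing idea. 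Your fallback, comparing $b_1(u)$ with the $(1,1)$-entry of $T(u)T(u-1)\cdots T(u-p+1)$, does not help as stated. That entry is a single element of $Y$ involving all the $t_{i,j}$, and it has no evident relation to the coproduct $\Delta(b_1(u))$.
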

		
		Now we need to determine $S(b_i(u))$.
		For that, we introduce one more family of elements, we let
		\begin{equation*}
			b_{i}'(u)=\sum\limits_{r \geq 0}b_{i}'^{(r)}u^{(-r)}:=
			\begin{cases}
				t'_{1,1}(u)t'_{1,1}(u-1) \cdots t'_{1,1}(u-p+1) & \quad \text{if}~i=1,\\
				t_{2,2}(u)t_{2,2}(u-1) \cdots t_{2,2}(u-p+1) & \quad \text{if}~ i=2.
			\end{cases}
		\end{equation*}
		
		\begin{Proposition}\label{prop:S send p to p}
			We have $S(I_p)\subseteq I_p$.    
		\end{Proposition}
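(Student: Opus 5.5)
Since $S$ is an anti-automorphism (super sense) and $I_p=\Y Z_p(\Y)_+$ is a two-sided ideal generated by the central elements $b_i^{(rp)}$, it suffices to show that $S(b_i^{(r)})\in I_p$ for all $r>0$; then $S(\Y b\,)=S(b)S(\Y)\subseteq I_p$ up to signs. Since all $b_i^{(r)}$ with $r>0$ lie in $Z_p(\Y)_+$ by Remark \ref{Remark: generators of p-center}, the goal is to show that $S(b_i(u))$ equals $1$ modulo $I_p[[u^{-1}]]$. The natural route is through the series $b_i'(u)$ just introduced.

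\emph{Step 1: compute $S$ on the diagonal Drinfeld series.} From the Gauss factorization $T(u)=F(u)D(u)E(u)$ we get $T(u)^{-1}=E(u)^{-1}D(u)^{-1}F(u)^{-1}$. Hence $t'_{2,2}(u)=d_2(u)^{-1}$ and $t'_{1,1}(u)=d_1(u)^{-1}+e(u)d_2(u)^{-1}f(u)$. Applying $S$ to $T(u)$ and using that $S$ is an anti-homomorphism, we obtain $S(t_{1,1}(u))=t'_{1,1}(u)$ and $S(d_2(u)^{-1})=S(t'_{2,2}(u))$. To express $S(t'_{2,2}(u))$, apply $S$ to $T(u)T(u)^{-1}=1$ and obtain $S(T^{-1}(u))$ in terms of $T(u)$ (super-transposition twisted). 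I expect $S(d_2(u)^{-1})$ to be $t_{2,2}(u)$, possibly up to a shift of $u$ or a sign convention. Consequently
\[
S(b_1(u))=t'_{1,1}(u-p+1)\cdots t'_{1,1}(u)=b_1'(u),\qquad S(b_2(u))=b_2'(u).
\]
Here I use that the factors commute: the $t'_{1,1}(u-a)$ commute among themselves, as do the $t_{2,2}(u-a)$, by \eqref{def relation-series} applied to $T$ and $T^{-1}$. The reversal of order forced by the anti-homomorphism is therefore harmless.

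\emph{Step 2 (main obstacle): show $b_i'(u)\equiv 1 \pmod{I_p}$.} For $i=2$, I write $t_{2,2}(u)=d_2(u)+f(u)d_1(u)e(u)$. I would relate $b_2'(u)$ to $b_1(u)^{-1}b_2(u)^{-1}$-type products, imitating the non-super argument of \cite{CHT25} and the Berezinian. The quantum Berezinian $\mathrm{Ber}(u)=d_1(u)d_2(u)^{-1}$ is central. The product $\prod_a t_{2,2}(u-a)$ should then be shown to equal $\prod_a d_2(u-a)$ times a central correction that is a product of $b$'s. Concretely, I would work in the $Z_p$-free basis of restricted monomials and show that the $p$-fold product of $d_2(u-a)+f(u-a)d_1(u-a)e(u-a)$ collapses. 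The odd terms $e,f$ square to zero by \eqref{bracket eres frfs}, and the commutation relations \eqref{bracket dir esfs} telescope over the shifts $u,\dots,u-p+1$ because $p=0$ in $\kk$. Analogously, $t'_{1,1}(u)=d_1(u)^{-1}+e(u)d_2(u)^{-1}f(u)$ is handled the same way. An alternative cleaner route for Step 2, which I would try first, is to use Proposition \ref{prop: delta send p to p} together with the antipode axiom $m(S\otimes\id)\Delta(b_i(u))=\varepsilon(b_i(u))=1$. This gives $S(b_i(u))\,b_i(u)=1$ in $\Y[[u^{-1}]]$, since $\Delta(b_i(u))=b_i(u)\otimes b_i(u)$. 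It follows that $S(b_i(u))=b_i(u)^{-1}$, all of whose positive-degree coefficients lie in $Z_p(\Y)_+$, and this bypasses Step 2 entirely; Step 1 and $b_i'$ then serve only as an explicit identification.

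\emph{Step 3: conclude.} Since $S(b_i^{(r)})\in Z_p(\Y)_+\subseteq I_p$ for every $r>0$, and $S$ is an anti-automorphism, we get $S(I_p)=S(\Y Z_p(\Y)_+)\subseteq S(Z_p(\Y)_+)S(\Y)\subseteq I_p$. Here we use that $I_p$ is two-sided because $Z_p(\Y)$ is central.
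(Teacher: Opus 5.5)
Your argument is correct, and it takes a different route from the paper. The valid proof is your ``alternative cleaner route'' followed by Step 3. Drop Steps 1 and 2: the guess for $S(d_2(u)^{-1})$ and the ``collapse/telescope'' sketch are not justified, and nothing in the valid argument depends on them.

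\textbf{The paper's route.} It computes $S(b_1(u))=b_1'(u)$ and $S(b_2'(u))=b_2(u)$ directly, using that $S$ is an anti-automorphism and that $d_2(u)^{-1}=t'_{2,2}(u)$. It then appeals to a rerun of the proof of Lemma 5.6 and Theorem 5.7 in \cite{CH23}: $Z_p(\Y)$ is generated both by the coefficients of $b_1(u),b_2'(u)$ and by those of $b_1'(u),b_2(u)$. So $S$ carries one generating set of the $p$-center onto another.

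\textbf{Your route.} You use that $b_i(u)$ is grouplike (Proposition \ref{prop: delta send p to p}) with $\varepsilon(b_i(u))=1$. You should state why the latter holds: $\varepsilon(d_1(u))=\varepsilon(d_2(u))=1$ by the Gauss decomposition, since $\varepsilon(e(u))=\varepsilon(f(u))=0$. Reading the antipode axiom coefficientwise then gives, for $r>0$,
\[
S(b_i^{(r)})=-\sum_{s=0}^{r-1}S(b_i^{(s)})\,b_i^{(r-s)} .
\]
This already lies in $I_p$, with no induction needed, because every $b_i^{(t)}$ with $t>0$ lies in $I_p$ by Remark \ref{Remark: generators of p-center}.

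\textbf{Comparison.}
\begin{itemize}
\item Your approach needs no alternative generating set of $Z_p(\Y)$, which the paper only sketches.
\item It also explains that generation statement. Combined with the paper's formulas and the injectivity of $S$, it yields $b_1'(u)=b_1(u)^{-1}$ and $b_2'(u)=b_2(u)^{-1}$, from which the two generation claims follow at once.
\item The paper's argument, in turn, is independent of the coproduct computation in the appendix.
\item The paper's argument also produces explicit formulas for $S$ on the $p$-central series.
\end{itemize}
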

		\begin{proof}
			Since $S$ is an anti-antomorphism of $\Y$ \cite[p. 126]{Na91}, \eqref{t11t22 drinfeld} and \eqref{bracket dirdjs} imply that
			\[
			S(b_1(u))=S(t_{1,1}(u)t_{1,1}(u-1)\cdots t_{1,1}(u-p+1))=b_1'(u).
			\]
			Morwover, the Gauss decomposition yields $d_2(u)^{-1}=t_{2,2}'(u)$.
			By the same token, we have
			\[
			S(b_2'(u))=S(t_{2,2}(u)t_{2,2}(u-1)\cdots t_{2,2}(u-p+1))=b_2(u).
			\]
			Using the same proof of \cite[Lemma 5.6 and Theorem 5.7]{CH23}, one can show that 
			the $p$-center $Z_p(\Y)$ of $\Y$ can be generated by $\{b_1^{(r)},b_2'^{(r)};~r>0\}$
			or $\{b_1'^{(r)},b_2^{(r)};~r>0\}$. The assertion follows.
		\end{proof}
		The main result of this subsection reads:
		\begin{Theorem}\label{thm: yp hopf}
			The ideal $I_p$ is a hopf ideal of $\Y$.  
			In particular, $\Y^{[p]}$ inherits from $\Y$ the Hopf algebra
			structure.
		\end{Theorem}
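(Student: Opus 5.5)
To show that $I_p=\Y Z_p(\Y)_+$ is a Hopf ideal, I would check three properties: $\Delta(I_p)\subseteq I_p\otimes\Y+\Y\otimes I_p$, $S(I_p)\subseteq I_p$, and $\varepsilon(I_p)=0$. The first two are Proposition \ref{prop: delta send p to p} and Proposition \ref{prop:S send p to p}, so only the counit remains to be dealt with. Before that, I would note that $I_p$ is a two-sided ideal: $Z_p(\Y)$ is central by \cite[Theorem 4.8]{CH23}, so $\Y Z_p(\Y)_+=Z_p(\Y)_+\Y$, and the quotient $\Y^{[p]}$ is a superalgebra.

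For the counit, since $\varepsilon$ is an algebra homomorphism and $I_p$ is generated as an ideal by the $b_i^{(rp)}$, it suffices to show $\varepsilon(b_i^{(r)})=0$ for $r>0$, i.e.\ $\varepsilon(b_i(u))=1$. The counit sends $T(u)\mapsto I$, so the Gauss factorization gives $\varepsilon(d_1(u))=\varepsilon(t_{1,1}(u))=1$. Also $\varepsilon(e(u))=\varepsilon(d_1(u)^{-1}t_{1,2}(u))=0$ and similarly $\varepsilon(f(u))=0$. Hence $\varepsilon(d_2(u))=\varepsilon(t_{2,2}(u))-\varepsilon(f(u)d_1(u)e(u))=1$. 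Each factor in the definition \eqref{def of biu} therefore maps to $1$ (shifting $u\mapsto u-k$ preserves this), so $\varepsilon(b_i(u))=1$, as needed.

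With these three facts, the standard argument gives that $\Delta$, $S$ and $\varepsilon$ descend to $\Y^{[p]}$:
\begin{itemize}
\item $\Delta$ induces $\Y^{[p]}\to\Y^{[p]}\otimes\Y^{[p]}$, because $\Y\otimes\Y\to\Y^{[p]}\otimes\Y^{[p]}$ has kernel $I_p\otimes\Y+\Y\otimes I_p$;
\item $S$ induces a (super) anti-automorphism;
\item $\varepsilon$ factors through $\Y^{[p]}$.
\end{itemize}
The Hopf axioms then hold on $\Y^{[p]}$ because they hold on $\Y$ and the projection is surjective.

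There is no real obstacle here: the substantive inputs, namely the group-likeness of $b_i(u)$ and the alternative generating sets of $Z_p(\Y)$ used for $S$, are already established in the two propositions. The only point needing care is the counit computation through the Gauss decomposition, in particular confirming that the $d_2$-factor also has counit $1$.
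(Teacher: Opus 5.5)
Your proof is correct and follows the paper, which deduces the theorem directly from Propositions \ref{prop: delta send p to p} and \ref{prop:S send p to p}. Your computation $\varepsilon(b_i(u))=1$ via the Gauss factorization adds the counit condition $\varepsilon(I_p)=0$, which the paper leaves implicit. (A minor point: $S(I_p)\subseteq I_p$ by itself only gives an induced anti-endomorphism of $\Y^{[p]}$, not an anti-automorphism, but that is all the Hopf structure requires.)
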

		\begin{proof}
			This is a direct consequence of Proposition \ref{prop: delta send p to p} and Proposition \ref{prop:S send p to p}.   
		\end{proof}
		
		\subsection{Evaluation homomorphism}
		Let $U(\fgl_{1|1})$ be the universal enveloping algebra of the Lie superalgebra $\fgl_{1|1}$.
		There exists a surjective homomorphism
		\[
		\ev:\Y\rightarrow U(\fgl_{1|1})
		\]
		called the {\it evaluation homomorphism} (see for example \cite[(7)]{Gow07}),
		defined by
		\begin{align}\label{evaluation map}
			\ev(t_{i,j}(u)):=\delta_{i,j}+(-1)^{|i|}e_{i,j}u^{-1},   
		\end{align}
		where $e_{i,j}\in\fgl_{1|1}$ is the elementary matrix.
		Since we are in characteristic $p$,
		the Lie superalgebra $\fgl_{1|1}$ admits a natural structure of {\it restricted Lie superalgebra}.
		That is, the even subalgebra $(\fgl_{1|1})_{\bar{0}}$ is a restricted Lie algebra with $p$-map $(\fgl_{1|1})_{\bar{0}}\rightarrow (\fgl_{1|1})_{\bar{0}}$ sending $x\mapsto x^{[p]}$, which is defined on a basis of $(\fgl_{1|1})_{\bar{0}}$ by the rule $e_{i,i}^{[p]}=e_{i,i}$.
		General theory (cf. \cite[Section 2]{WZ09}) implies that 
		the element $e_{i,i}^p-e_{i,i}\in U(\fgl_{1|1})$ is central.
		Let $J_p$ be the ideal of $U(\fgl_{1|1})$ generated by the even central elements $\{e_{i,i}^p-e_{i,i};~i=1,2\}$.
		The quotient algebra $U^{[p]}(\fgl_{1|1}):=U(\fgl_{1|1})/J_p$ is called the {\it restricted enveloping algebra} of $\fgl_{1|1}$.
		
		\begin{Lemma}\label{Lemma: restricted ev hom}
			The evaluation homomorphism $\ev$ factors to a surjective algebra homomorphism 
			\[
			\ev^{[p]}: \yp\rightarrow U^{[p]}(\fgl_{1|1}).
			\]
		\end{Lemma}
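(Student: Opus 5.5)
We need to show that the composite $\Y\xrightarrow{\ev}U(\fgl_{1|1})\to U^{[p]}(\fgl_{1|1})$ kills $I_p=\Y Z_p(\Y)_+$. Since $I_p$ is the two-sided ideal generated by the central elements $b_i^{(rp)}$, it suffices to check that the images of the series $b_1(u)$ and $b_2(u)$ equal $1$ in $U^{[p]}(\fgl_{1|1})[[u^{-1}]]$. (By Remark \ref{Remark: generators of p-center} all coefficients $b_i^{(r)}$ lie in $Z_p(\Y)_+$, so the conditions $b_i^{(rp)}\mapsto 0$ and $b_i(u)\mapsto 1$ are equivalent.) Surjectivity of $\ev^{[p]}$ then follows at once from the surjectivity of $\ev$.

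\textbf{Images of $d_1(u)$ and $d_2(u)$.} From \eqref{t11t22 drinfeld}, $d_1(u)=t_{1,1}(u)$, so $\ev(d_1(u))=1+e_{1,1}u^{-1}$. For $d_2$ we use the Gauss decomposition: $d_2(u)^{-1}=t'_{2,2}(u)$, or equivalently $d_2(u)=t_{2,2}(u)-t_{2,1}(u)t_{1,1}(u)^{-1}t_{1,2}(u)$. Applying $\ev$ gives
\[
\ev(d_2(u))=1-e_{2,2}u^{-1}-e_{2,1}(1+e_{1,1}u^{-1})^{-1}e_{1,2}u^{-2}.
\]
We simplify this using $[e_{1,1},e_{1,2}]=e_{1,2}$, which gives $(1+e_{1,1}u^{-1})^{-1}e_{1,2}=e_{1,2}(1+(e_{1,1}+1)u^{-1})^{-1}$. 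We expect the resulting quotient to be $\ev(d_2(u))=(1+(e_{1,1}-e_{2,2}+\ldots)u^{-1})/(1+e_{1,1}u^{-1})$ up to a shift. Concretely, the aim is a formula of the shape $\ev(d_1(u)^{-1}d_2(u))$ or $\ev(d_2(u))=\frac{(1+(e_{1,1}-1)u^{-1})(1-e_{2,2}u^{-1})+\cdots}{\cdots}$, with commuting factors linear in $u^{-1}$ whose coefficients are polynomials in $e_{1,1},e_{2,2},e_{2,1}e_{1,2}$.

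An alternative to this direct simplification is to use the central element $c=e_{1,1}+e_{2,2}$ and express $e_{2,1}e_{1,2}$ on the appropriate weight space. Another alternative is to check the identity in every finite-dimensional restricted module via weight vectors.

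\textbf{Computing the products.} With the factorized form in hand, each $b_i(u)$ becomes a product over $a\in\FF_p$ of factors $(1+xu^{-1}-au^{-1})$-type expressions, shifted by $u\mapsto u-a$. The key identity is
\[
\prod_{a=0}^{p-1}(u-a+x)=(u+x)^p-(u+x)=u^p-u+x^p-x,
\]
valid for any element $x$ in a commutative subalgebra. For $b_1$ this gives
\[
\ev(b_1(u))=u^{-p}\prod_{a}(u-a+e_{1,1})=1+(e_{1,1}^p-e_{1,1})u^{-p}-u^{1-p}+\ldots,
\]
and we must track the normalization $u^{-p}\prod(u-a)=1-u^{1-p}$ carefully. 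In fact $\ev(b_1(u))=\frac{u^p-u+e_{1,1}^p-e_{1,1}}{u^p-u}$, which is $1$ modulo $J_p$. For $b_2$ we apply the same identity to numerator and denominator, reducing modulo $e_{i,i}^p-e_{i,i}$ and $(e_{1,1}+\text{shift})^p-(\ldots)$.

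\textbf{Main obstacle.} The hardest step is obtaining a clean factorization of $\ev(d_2(u))^{\pm1}$ into commuting linear factors. This requires commuting $e_{2,1}$ and $e_{1,2}$ past $e_{1,1}$ and using the odd relation $e_{1,2}e_{2,1}+e_{2,1}e_{1,2}=e_{1,1}+e_{2,2}$ together with $e_{1,2}^2=0$. If the factorization proves awkward, the fallback is to argue on the products directly. Since $d_1(u)d_2(u)^{-1}$-type series relate to the quantum Berezinian, we can compute $\ev$ of the quantum Berezinian, which is $(1+e_{1,1}u^{-1})(1-e_{2,2}(u-1)^{-1}\ldots)^{-1}$-like in known form from the characteristic zero computation, valid over $\ZZ$ and hence over $\kk$. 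Then $b_2(u)$ is expressed via $b_1$ and the $p$-fold product of the Berezinian, and each factor reduces to $1$ modulo $J_p$ by the identity above.
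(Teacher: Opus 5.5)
Your reduction to showing $\ev(b_i(u))\equiv 1$ modulo $J_p$ is correct, and so is your treatment of $b_1(u)$ via $\prod_{a\in\FF_p}(y-a)=y^p-y$. The gap is $b_2(u)$, the only nontrivial part. You never put $\ev(d_2(u))$ into usable form, and the mechanism you anticipate, that each factor becomes $1$ modulo $e_{i,i}^p-e_{i,i}$, is false.

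First, since $|2|=1$ we have $\ev(t_{2,1}(u))=-e_{2,1}u^{-1}$, so the correction term in $\ev(d_2(u))$ enters with a plus sign, not a minus. Put $z:=e_{2,1}e_{1,2}$ and $h:=e_{1,1}+e_{2,2}$. Then $z$ commutes with $e_{1,1},e_{2,2}$, satisfies $z^2=hz$, and
\[
\ev(d_2(u))=\frac{(u-e_{2,2}+z)(u+e_{1,1}+1-z)}{u(u+e_{1,1}+1)}.
\]
The element $z$ cannot be avoided. On $L(\lambda_1,\lambda_2)$ this series acts by $(u-\lambda_2)/u$ on $\xi$, but by $(u-\lambda_2)(u+\lambda_1-1)/(u(u+\lambda_1))$ on $e_{2,1}.\xi$, and no expression in $e_{1,1},e_{2,2}$ alone does both.

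Now apply your identity with $x=z-e_{2,2}$ and $y=e_{1,1}+1-z$. Write $A=e_{1,1}^p-e_{1,1}$, $D=e_{2,2}^p-e_{2,2}$ and $w=(h^{p-1}-1)z$; then $x^p-x=w-D$ and $y^p-y=A-w$. But $w=e_{2,1}(h^{p-1}-1)e_{1,2}$ is nonzero in $U^{[p]}(\fgl_{1|1})$ by PBW, so neither factor reduces to $1$. What rescues the argument is a cancellation between the two factors:
\begin{itemize}
\item $w^2=(h^p-h)w=(A+D)w$, hence $(x^p-x)(y^p-y)=-AD$;
\item the denominator of $\ev(b_2(u))$ is therefore $(u^p-u+A)(u^p-u-D)$;
\item so $\ev(b_2(u))=(u^p-u)/(u^p-u-D)\equiv 1$.
\end{itemize}
This step is missing, and your fallbacks do not avoid it. The image of the Berezinian $d_1(u)d_2(u)^{-1}$ has the same $z$-dependent denominator. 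Checking on simple modules, or weight space by weight space using only the weights, is not enough either. $U^{[p]}(\fgl_{1|1})$ is not semisimple, and $z$ acts as a nonzero nilpotent on the $h=0$ weight spaces of its regular module, whereas on simple modules each factor individually looks trivial.

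The paper avoids $d_2(u)$ altogether. As noted in the proof of Proposition \ref{prop:S send p to p}, $Z_p(\Y)$ is also generated by the coefficients of $b_1(u)$ and $b_2'(u)=t_{2,2}(u)t_{2,2}(u-1)\cdots t_{2,2}(u-p+1)$. Since $\ev(t_{2,2}(u))=1-e_{2,2}u^{-1}$, the series $b_2'(u)$ is handled exactly like $b_1(u)$. So either switch to $b_2'(u)$, or supply the factorization and cancellation above. The latter shows $\ev(b_2(u))=\ev(b_2'(u))^{-1}$, and has the merit of working directly with the defining generators $b_2^{(rp)}$ without invoking that generation statement.
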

		\begin{proof}
			It suffices to show that the images of the coefficients of $b_1(u)$ and $b_2'(u)$ under $\ev$ are contained in $J_p$. 
			Since $d_1(u)=t_{1,1}(u)$ \eqref{t11t22 drinfeld},
			we have
			\begin{align*}
				\ev(b_1(u))&=\ev((t_{1,1}(u)t_{1,1}(u-1)\cdots t_{1,1}(u-p+1))\\
				&=(1+e_{1,1}u^{-1})(1+e_{1,1}(u-1)^{-1})\cdots (1+e_{1,1}(u-p+1)^{-1})\\
				&=\frac{u+e_{1,1}}{u}\frac{u-1+e_{1,1}}{u-1}\cdots \frac{u-p+1+e_{1,1}}{u-p+1}.
			\end{align*}
			Write 
			\[
			\ev(b_1(u))=1+g^{(1)}u^{-1}+g^{(2)}u^{-2}+\cdots.
			\]
			Here the coefficient $g^{(r)}$ is a polynomial of $e_{1,1}$.
			For each $r>0$, the above expression implies that $g^{(r)}(i)=0$ for all $i\in\FF_p$.
			This yields $(e_{1,1}^p-e_{1,1})\mid g^{(r)}$. As a result, $g^{(r)}\in J_p$.
			One argues similarly for $b_2'(u)$.
		\end{proof}
		Using the evaluation homomorphism $\ev$, 
		we see that any representation of $\fgl_{1|1}$ can be regarded as a representation of $\Y$ by pullback, and any irreducible representation of $\fgl_{1|1}$ remains irreducible over $\Y$.
		By the same token, Lemma \ref{Lemma: restricted ev hom} 
		allows us to equip any $U^{[p]}(\fgl_{1|1})$-module with a structure of $\Y^{[p]}$-module.
		\section{Representations of $\yp$}\label{section name: rep of yp}
		In this section, we study the representations of the restricted super Yangian $\yp$.
		To describe the finite dimensional irreducible modules of $\yp$,
		we need to construct analogues of highest weight
		representations in characteristic zero (see \cite[Section III]{Zhang95}).
		\subsection{Baby Verma modules}
		We define
		\[
		{\bf I}_{\NN}:=\{(i_1,i_2,\dots);~i_k\in\mathbb{Z}_{\geq 0}~\text{and only finitely many are non-zero}\}.
		\]
		Given $I=(i_1,i_2,\cdots)\in {\bf I}_{\NN}$, we let
		\[
		f^I:=\prod\limits_{r>0}(f^{(r)})^{i_r}   
		\]
		and similarly we define elements $d_1^I,d_2^I$ and $e^I$.
		Consider two subsets of ${\bf I}_{\NN}$
		\[
		{\bf I}_0:=\{(i_1,i_2,\dots)\in {\bf I}_{\NN};~0\leq i_k<p\}\quad {\bf I}_1:=\{(i_1,i_2,\dots)\in {\bf I}_{\NN};~0\leq i_k<2\},
		\]
		so that 
		\begin{align}\label{PBW for Yp}
			\{f^{I_1}d_1^{I_2}d_2^{I_3}e^{I_4};~I_2,I_3\in {\bf I}_0, I_1,I_4\in {\bf I}_1\}
		\end{align}
		is the PBW basis of $\yp$ (see Section \ref{subsection name:restricted yangian}).
		
		\begin{Definition}
			A formal power series $f(u)\in\kk[[u^{-1}]]$ is called {\it restricted}, provided
			\[
			f(u)f(u-1)\cdots f(u-p+1)=1,
			\] 
			which implies that $f(u)\in 1+u^{-1}\kk[[u^{-1}]]$.
			Given two formal series
			\[
			\lambda_i(u)=1+\lambda_i^{(1)}u^{-1}+\lambda_i^{(2)}u^{-1}+\cdots,\quad i=1,2,   
			\]
			we say that the tuple $\lambda(u):=(\lambda_1(u),\lambda_2(u))$ is restricted if both $\lambda_1(u)$ and $\lambda_2(u)$ are restricted.
		\end{Definition}
		
		Let $\lambda(u)=(\lambda_1(u),\lambda_2(u))$ be a restricted tuple. 
		We define the {\it baby Verma module} $\zpl$ corresponding to $\lambda(u)$ as the quotient of $\yp$ by the left ideal generated by the elements $e^{(r)}$ with $r>0$, and by $d_i^{(r)}-\lambda_i^{(r)}$ with $i=1,2$ and $r>0$.
		Denote by $1_{\lambda(u)}$ the image of the element $1\in\yp$ in the quotient.
		Clearly, $\zpl$ is a cyclic module and $\zpl=\yp.1_{\lambda(u)}$.
		We put $\Y^{[p],-}:=\Span_{\kk}\{f^I;~I\in {\bf I}_1\}$ and $(\Y^{[p],-})_+:=\Span_{\kk}\{f^I;~I\in {\bf I}_1, I\neq (0,0,\dots)\}$.
		Owing to \eqref{PBW for Yp}, there is an isomorphism
		\[
		\zpl\cong\Y^{[p],-}\otimes_{\kk}\kk 1_{\lambda(u)}   
		\]
		of vector spaces.
		
		Alternatively, the baby Verma modules can be described in terms of the RTT presentation of $\Y$ (cf. \cite[Proposition 3.2.2]{Mol07}). 
		
		\begin{Proposition}\label{prop: zpl defind by rtt}
			Let $\lambda(u)=(\lambda_1(u),\lambda_2(u))$ be restricted.
			The baby Verma module $Z^{[p]}(\lambda(u))$ equals to the quotient of $\yp$ by the left ideal $J$ which is generated by the elements $t_{1,2}^{(r)}$ with $r>0$ and by
			by $t_{i,i}^{(r)}-\lambda_i^{(r)}$ with $i=1, 2$ and $r>0$.
		\end{Proposition}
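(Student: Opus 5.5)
Let $J'$ be the left ideal of $\yp$ generated by the $e^{(r)}$ and the $d_i^{(r)}-\lambda_i^{(r)}$ ($i=1,2$, $r>0$), so that $\zpl=\yp/J'$ by definition. I will show $J=J'$ by proving the two inclusions, working throughout with the generating series and the Gauss decomposition \eqref{t11t22 drinfeld}, \eqref{t12 t21 drinfeld}. It is convenient to phrase the arguments coefficientwise through the statement ``$X(u)\in J[[u^{-1}]]$'' for a series $X(u)$, meaning that all its coefficients lie in $J$. Since $J$ is only a left ideal, multiplying such a series on the left by any series with coefficients in $\yp$ keeps it in $J[[u^{-1}]]$, but multiplying on the right does not.

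\emph{Step 1: $J'\subseteq J$.} First, $d_1(u)-\lambda_1(u)=t_{1,1}(u)-\lambda_1(u)$ lies in $J[[u^{-1}]]$ directly. Next, $e(u)=d_1(u)^{-1}t_{1,2}(u)$ is a left multiple of $t_{1,2}(u)$, so all $e^{(r)}\in J$. Finally,
\[
d_2(u)=t_{2,2}(u)-f(u)d_1(u)e(u),
\]
and the second term is a left multiple of $e(u)$, hence lies in $J[[u^{-1}]]$. Therefore $d_2(u)-\lambda_2(u)\equiv t_{2,2}(u)-\lambda_2(u)\equiv 0$ modulo $J[[u^{-1}]]$.

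\emph{Step 2: $J\subseteq J'$.} We have $t_{1,2}(u)=d_1(u)e(u)$, which is a left multiple of $e(u)$ and so lies in $J'[[u^{-1}]]$. Also $t_{1,1}(u)-\lambda_1(u)=d_1(u)-\lambda_1(u)$. The remaining generator is
\[
t_{2,2}(u)-\lambda_2(u)=f(u)d_1(u)e(u)+\bigl(d_2(u)-\lambda_2(u)\bigr),
\]
and both summands are visibly in $J'[[u^{-1}]]$, the first because it ends in $e(u)$.

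I expect no serious obstacle. The only point needing care is keeping every factor to the left of the generator, and the Gauss decomposition already has this ordering in each case. It is also worth remarking that $\lambda_i^{(r)}$ are scalars, so the generators $t_{i,i}^{(r)}-\lambda_i^{(r)}$ and $d_i^{(r)}-\lambda_i^{(r)}$ are honest elements of $\yp$ and the coefficientwise manipulations are legitimate. With both inclusions in hand, $J=J'$ and the two quotients coincide.
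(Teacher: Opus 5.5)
Your proposal is correct and follows the same route as the paper's omitted proof, which compares the two left ideals directly through the Gauss decomposition identities \eqref{t11t22 drinfeld} and \eqref{t12 t21 drinfeld}. Your care in keeping every factor to the left of $e(u)$ or $t_{1,2}(u)$ (in particular $e(u)=d_1(u)^{-1}t_{1,2}(u)$ and $t_{2,2}(u)-d_2(u)=f(u)d_1(u)e(u)$) is exactly what makes both inclusions work.
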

		\begin{proof}
			The proof, which uses \eqref{t11t22 drinfeld} and \eqref{t12 t21 drinfeld}, is similar to the proof of \cite[Proposition 3.3]{CHT25}, and will be skipped here.
		\end{proof}
		
		\begin{Proposition}\label{prop: zpl unique max submodule}
			$\zpl$ has a unique maximal submodule.    
		\end{Proposition}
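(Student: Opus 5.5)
The plan is the standard weight-space argument, using the commutative subalgebra generated by the $d_i^{(r)}$ in place of a Cartan subalgebra. Let $\mathcal{H}$ be the subalgebra of $\yp$ generated by $\{d_1^{(r)},d_2^{(r)};~r>0\}$; it is commutative by \eqref{bracket dirdjs}. For a $\yp$-module $M$ and a tuple $\mu(u)=(\mu_1(u),\mu_2(u))$, call a nonzero $v\in M$ a \emph{weight vector of weight} $\mu(u)$ if $d_i(u)v=\mu_i(u)v$ for $i=1,2$. By construction, $1_{\lambda(u)}$ is a weight vector of weight $\lambda(u)$ and spans the ``top'' part $\kk 1_{\lambda(u)}$ in the decomposition $\zpl\cong\Y^{[p],-}\otimes_{\kk}\kk 1_{\lambda(u)}$.

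The key claim is that every proper submodule $N\subsetneq\zpl$ lies in $(\Y^{[p],-})_+ 1_{\lambda(u)}$. Granting this, the sum of all proper submodules is again contained in $(\Y^{[p],-})_+1_{\lambda(u)}$. That space is a proper subspace, since by the PBW basis \eqref{PBW for Yp} it does not contain $1_{\lambda(u)}$. So the sum of all proper submodules is itself proper, and it is then the unique maximal submodule.

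To prove the claim, I need a grading on $\zpl$ that separates $1_{\lambda(u)}$ from the rest and is respected by $\mathcal{H}$. Introduce the $\ZZ$-grading of $\yp$ in which $e^{(r)}$ has degree $1$, $f^{(r)}$ has degree $-1$, and $d_i^{(r)}$ has degree $0$. This grading is compatible with all relations in Theorem \ref{thm: Drinfeld presentation}. It also passes to $\yp$, because the generators $b_i^{(rp)}$ of $Z_p(\Y)$ are polynomials in the $d$'s, hence homogeneous of degree $0$. The left ideal defining $\zpl$ is generated by homogeneous elements, so $\zpl=\bigoplus_{k\le 0}\zpl_k$ with $\zpl_0=\kk1_{\lambda(u)}$, since $f^I1_{\lambda(u)}$ has degree $-|I|$. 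This grading replaces a weight decomposition. I can avoid showing directly that $N$ is a sum of $\mathcal H$-weight spaces, which might fail, by proving instead that $N$ is graded. For that I will use a grading element: in characteristic $p$ there is no element acting as a degree operator, so I would try $d_1^{(1)}-d_2^{(1)}$ (or the appropriate signed combination). From \eqref{bracket dir esfs} with $r=1$ one gets $[d_i^{(1)},f^{(s)}]=-f^{(s)}$ for both $i$, which is bad, since then $d_1^{(1)}-d_2^{(1)}$ commutes with $f$. The fix is to use $d_1^{(1)}+d_2^{(1)}$, which acts on $\zpl_{-k}$ by the scalar $\lambda_1^{(1)}+\lambda_2^{(1)}-2k$. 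This only separates degrees modulo $p$.

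Here lies the main obstacle: degrees that agree modulo $p$ are not separated. Because $\Y^{[p],-}$ is an exterior algebra, the degrees can be as negative as one likes, so this is a real issue. My first attempt is to avoid grading entirely with a top-down argument. Let $N$ be a proper submodule and suppose $v=c\,1_{\lambda(u)}+w\in N$ with $c\ne0$ and $w\in(\Y^{[p],-})_+1_{\lambda(u)}$. Apply suitable products $e^{(r_1)}\cdots e^{(r_k)}$ to kill the lower terms of $w$ one degree at a time, as follows. Write $w=\sum_I c_If^I1_{\lambda(u)}$ and pick $I$ of maximal length $k$ with $c_I\neq0$. Then $e^{J}$ for a suitable $J$ of length $k$ maps $v$ to $c'\,1_{\lambda(u)}+\ldots$, where the leading coefficient is computed via \eqref{erfs} and the relation $e1_{\lambda(u)}=0$. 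This requires a nondegeneracy of the pairing between degree $-k$ and degree $0$, which may fail; that failure is exactly the existence of singular vectors. So the cleaner route is this. $N$ is $\mathcal H$-stable and finite dimensional, so decompose $N$ into generalized $\mathcal H$-eigenspaces, and use that $\zpl_0$ is the $\mathcal H$-generalized eigenspace for $\lambda(u)$ intersected with the $e$-invariants. I would verify this using the commutation $d_i(u)f(v)$ from \eqref{def relation-series}: on $f^I1_{\lambda(u)}$ with $I\ne0$, the action of $d_1(u)$ is upper triangular with respect to the degree filtration, with diagonal entries $\lambda_1(u)$ multiplied by rational factors $\prod(u-a)/(u-a-1)$ that differ from $1$. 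If these characters differ from $\lambda(u)$, then the projection of $v$ to the $\lambda(u)$-generalized eigenspace is a nonzero multiple of $1_{\lambda(u)}$ modulo nothing, so $1_{\lambda(u)}\in N$ and $N=\zpl$, a contradiction. Checking that distinctness of characters is the step I expect to require the most care.
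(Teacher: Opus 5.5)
Your reduction matches the paper's: it suffices to show that every proper submodule $N$ lies in $(\Y^{[p],-})_+.1_{\lambda(u)}$, which is a proper subspace by the PBW basis. The gap is in your proof of this claim.

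\emph{Why the final step fails.} You treat $N$ as finite dimensional and project onto a generalized $\mathcal H$-eigenspace. But $\zpl\cong\Y^{[p],-}$ is infinite dimensional, being an exterior algebra on the infinitely many $f^{(r)}$, $r>0$. Proper submodules can be infinite dimensional too. For example, for $\lambda(u)=(1,1)$ the counit gives a one-dimensional quotient whose kernel $(\Y^{[p],-})_+.1_{\lambda(u)}$ is infinite dimensional.

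More seriously, $\mathcal H$ does not act locally finitely. So there is no generalized eigenspace decomposition, and no projection realized by an element of $\mathcal H$ that would keep you inside $N$. Indeed, \eqref{bracket dir esfs} gives
\[
d_1^{(2)}f^{(s)}.1_{\lambda(u)}=(\lambda_1^{(2)}-\lambda_1^{(1)})\,f^{(s)}.1_{\lambda(u)}-f^{(s+1)}.1_{\lambda(u)}.
\]
Thus on the degree $-1$ part, $d_1^{(2)}$ is a scalar minus the shift $f^{(s)}\mapsto f^{(s+1)}$. This operator has no eigenvectors and no nonzero finite-dimensional invariant subspace. Consequently, already for $v=c1_{\lambda(u)}+w$ with $0\neq w$ of degree $-1$, the space $\mathcal H.v$ is infinite dimensional. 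The ``distinct characters'' you planned to check do not exist in this setting.

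Your $e$-based attempt fails for a separate reason: every $e^{(r)}$ annihilates the component $c1_{\lambda(u)}$ you are trying to isolate.

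\emph{The missing idea.} It needs neither the grading nor $\mathcal H$. By \eqref{bracket eres frfs} and $p>2$, the $f^{(r)}$ anticommute and square to zero. So $\Y^{[p],-}$ is an exterior algebra and every $x\in(\Y^{[p],-})_+$ is nilpotent. Suppose $v\in N$ is not in $(\Y^{[p],-})_+.1_{\lambda(u)}$. Write $v=c(1-x).1_{\lambda(u)}$ with $c\neq 0$, $x\in(\Y^{[p],-})_+$ and $x^n=0$. Then $c^{-1}(1+x+\cdots+x^{n-1}).v=1_{\lambda(u)}\in N$, so $N=\zpl$.

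This is exactly the paper's argument. The top vector is recovered by acting with a unit of the local algebra $\Y^{[p],-}$, not by any weight-space projection.
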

		\begin{proof}
			We shall prove that any proper submodule $M$ of $\zpl$ is contained in the vector space $(\Y^{[p],-})_+.1_{\lambda(u)}$.
			Suppose that $M\nsubseteq (\Y^{[p],-})_+.1_{\lambda(u)}$, then there is a non-zero element $y=(1-x).1_{\lambda(u)}\in M$ and $x\in(\Y^{[p],-})_+$.
			Now \eqref{bracket eres frfs} implies that $x$ is nilpotent. 
			Assume that $x^n=0$ for some positive integer $n$. 
			We have for $x':=1+x+\cdots+x^{n-1}$, $x'y=1_{\lambda(u)}\in M$,
			so that $M=\zpl$, a contradiction. On the other hand, it is obvious that $(\Y^{[p],-})_+.1_{\lambda(u)}$ is a proper subspace $\zpl$.
			Hence, the sum of all proper submodules of $\zpl$ is the unique
			maximal submodule.
		\end{proof}
		
		Let $L$ be a finite dimensional representation of $\yp$.
		We define the subspace $L^0$ of $L$ via
		\[
		L^0:=\{v\in L;~e(u).v=0\}.
		\]

		\begin{Lemma}\label{Lemma: L0 nonzero}
			If $L$ is a finite dimensional representation of $\yp$,
			then the space $L^0$ is non-zero.
		\end{Lemma}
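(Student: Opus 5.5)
The plan is to use the exterior-algebra structure of the odd generators $e^{(r)}$ and produce a vector annihilated by all of them. By \eqref{bracket eres frfs} the odd elements $e^{(r)}$ supercommute, so $e^{(r)}e^{(s)}=-e^{(s)}e^{(r)}$ and in particular $(e^{(r)})^2=\tfrac12[e^{(r)},e^{(r)}]=0$ (here we use $p>2$). Hence the subalgebra $E$ of $\yp$ generated by all $e^{(r)}$, $r>0$, is a quotient of an exterior algebra on countably many generators. The condition $e(u).v=0$ means precisely that $e^{(r)}.v=0$ for every $r>0$, so we must show that the common kernel of all $e^{(r)}$ on $L$ is nonzero.

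\textbf{Step 1: reduce to finitely many generators.} The image of $E$ in $\operatorname{End}_{\kk}(L)$ is finite dimensional because $L$ is. Consequently the linear span $V$ of the operators $e^{(r)}|_L$, $r>0$, is a finite-dimensional subspace of $\operatorname{End}_{\kk}(L)$. Choose $r_1,\dots,r_N$ such that $e^{(r_1)}|_L,\dots,e^{(r_N)}|_L$ span $V$. Then any vector killed by $e^{(r_1)},\dots,e^{(r_N)}$ is killed by every $e^{(r)}$, so it suffices to find a nonzero common kernel vector for these $N$ operators.

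\textbf{Step 2: construct the vector.} Take any nonzero $v\in L$ and consider the vectors $e^{(r_{j_1})}\cdots e^{(r_{j_k})}.v$ for subsets $j_1<\dots<j_k$ of $\{1,\dots,N\}$. Because the operators anticommute and square to zero, there is a maximal subset $S$ with $w:=\prod_{j\in S}e^{(r_j)}.v\neq 0$; such an $S$ exists since the empty subset gives $v\ne0$. For $j\in S$ we have $e^{(r_j)}.w=\pm (e^{(r_j)})^2\cdots=0$. For $j\notin S$, $e^{(r_j)}.w$ is, up to sign, the product over $S\cup\{j\}$ applied to $v$, which vanishes by maximality of $S$. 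Thus $w\in L^0$ and $w\neq 0$.

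\textbf{Main obstacle.} The only delicate point is Step 1, namely passing from infinitely many $e^{(r)}$ to finitely many while preserving the nilpotency and anticommutation relations. This is handled by working with operators on $L$, since anticommutation and square-zero hold for the operators as well. An alternative is a direct argument: $E$ acts on $L$ through a finite-dimensional quotient in which the augmentation ideal is nilpotent, because it is generated by anticommuting square-zero elements. A nilpotent ideal acting on a nonzero finite-dimensional module has nonzero common kernel, and this yields the result immediately.
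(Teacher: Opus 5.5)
Your proof is correct. It differs from the paper's mainly in where finite dimensionality is used.

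\textbf{The paper's route.} The paper argues by contradiction. Assuming $L^0=0$, it starts from a vector $v_1$ killed by $e^{(1)}$, lets $n_i+1$ be the smallest index with $e^{(n_i+1)}.v_i\neq 0$, and sets $v_{i+1}:=e^{(n_i+1)}.v_i$. Anticommutativity and $(e^{(r)})^2=0$ show that $v_{i+1}$ is killed by $e^{(1)},\dots,e^{(n_i+1)}$, so $n_1<n_2<\cdots$. The resulting staircase pattern ($e^{(n_i+1)}$ kills $v_j$ for $j>i$ but not $v_i$) makes $v_1,v_2,\dots$ linearly independent, which contradicts $\dim L<\infty$.

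\textbf{Your route.} You use finite dimensionality at the outset. You replace the infinite family $\{e^{(r)}\}$ by a finite subfamily spanning the same subspace of $\operatorname{End}_{\kk}(L)$. After that, the maximal-subset argument in what is effectively a finite exterior algebra produces the vector directly.

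\textbf{Comparison.} Both arguments rest on the same mechanism: applying a generator that does not kill the current vector yields a nonzero vector killed by one more generator. The paper's chain is a sequential version of your maximal subset, ordered by the natural indices. Your version is direct rather than by contradiction, and it avoids the index bookkeeping and the linear-independence step. It also makes clear that, once finitely many operators suffice, the lemma is just the fact that finitely many pairwise anticommuting square-zero operators (or, as in your alternative, a nilpotent ideal) have a nonzero common kernel on any nonzero module. The paper's version works with the generators $e^{(r)}$ themselves and never passes to their span in $\operatorname{End}_{\kk}(L)$.
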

		\begin{proof}
			Suppose that $L^0=(0)$.
			Since $(e^{(1)})^2=0$ \eqref{bracket eres frfs}, there exists a non-zero vector $v_1\in L$ such that $e^{(1)}.v_1=0$. 
			By assumption, we can find a positive integer $n_1$ such that $e^{(r)}.v_1=0$ for all $1\leq r\leq n_1$ and $e^{(n_1+1)}.v_1\neq 0$.
			Setting $v_2:=e^{(n_1+1)}.v_1$, we have $e^{(n_1+1)}.v_2=(e^{(n_1+1)})^2.v_1=0$.
			For $1\leq i\leq n_1$, again the relation \eqref{bracket eres frfs} yields
			\[
			e^{(i)}.v_2=e^{(i)}e^{(n_1+1)}.v_1=-e^{(n_1+1)}e^{(i)}.v_1=0.
			\]
			This shows that $e^{(r)}.v_2=0$ for all $1\leq r\leq n_1+1$, 
			so that we can find a positive integer $n_2>n_1$ such that $e^{(r)}.v_2=0$ for all $1\leq r\leq n_2$ and $e^{(n_2+1)}.v_2\neq 0$.
			
			Repeat the above argument, we obtain two sequences $v_1,v_2,\dots$ and $1\neq n_1<n_2<\cdots$ which satisfy (i) $v_i\neq 0$; (ii) $e^{(r)}.v_i=0$ for all $1\leq r\leq n_i$; (iii) $e^{(n_i+1)}v_i\neq 0$ for every positive integer $i$. Let $m$ be an arbitrary positive integer. 
			Assume that $\sum_{i=1}^m c_iv_i=0$. It follows from (ii) that $c_1e^{(n_1+1)}.v_1=0$, and property (iii) gives $c_1=0$.
			By the same token, we obtain all $c_i=0$.
			Therefore $v_1,v_2,\dots,v_m$ are linearly independent.
			As $L$ is finite dimensional, we arrive at a contradiction.
		\end{proof}
		
		By Proposition \ref{prop: zpl unique max submodule}, 
		we know that $\zpl$ has a unique simple quotient, denoted by $L^{[p]}(\lambda(u))$. 
		
		Now we state the main theorem of this section, which is the modular analogue of \cite[Theorem 2 and Theorem 3]{Zhang95}.
		\begin{Theorem}\label{thm: fd irr iso lplambda}
			Every finite-dimensional simple $\yp$-module $L$ is isomorphic to some $L^{[p]}(\lambda(u))$.
		\end{Theorem}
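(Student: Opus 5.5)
Starting from Lemma \ref{Lemma: L0 nonzero}, the space $L^0=\{v\in L;~e(u).v=0\}$ is a non-zero subspace of $L$. The plan is to find a common eigenvector $v\in L^0$ for all $d_i^{(r)}$. We then show that the resulting eigenvalue series form a restricted tuple $\lambda(u)$, and that $v$ generates $L$. This produces a surjection $\zpl\twoheadrightarrow L$, and Proposition \ref{prop: zpl unique max submodule} then identifies $L$ with $L^{[p]}(\lambda(u))$.

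\textbf{Step 1: $L^0$ is stable under every $d_i^{(r)}$.} Take $v\in L^0$ and apply \eqref{bracket dir esfs}:
\[
e^{(s)}d_i^{(r)}.v=d_i^{(r)}e^{(s)}.v-\sum_{t=0}^{r-1}d_i^{(t)}e^{(r+s-1-t)}.v=0.
\]
Hence $d_i^{(r)}.v\in L^0$. By \eqref{bracket dirdjs} the operators $d_i^{(r)}$ commute pairwise. Since $L^0$ is finite dimensional and $\kk$ is algebraically closed, the family $\{d_i^{(r)}|_{L^0}\}$ has a common eigenvector $v\neq 0$. We may also take $v$ homogeneous, because the $d_i^{(r)}$ are even and so preserve the parity decomposition of $L^0$. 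Write $d_i(u).v=\lambda_i(u)v$ with $\lambda_i(u)\in 1+u^{-1}\kk[[u^{-1}]]$.

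\textbf{Step 2: $\lambda(u)$ is restricted.} By Remark \ref{Remark: generators of p-center}, $b_1(u)=1$ and $b_2(u)=1$ in $\yp[[u^{-1}]]$. Applying $b_1(u)$ to $v$, and using that the $d_1$ factors commute with eigenvalue $\lambda_1(u-k)$ on $v$, gives $\lambda_1(u)\lambda_1(u-1)\cdots\lambda_1(u-p+1)=1$. For $i=2$, $d_2(u)^{-1}$ acts on $v$ by $\lambda_2(u)^{-1}$. Applying $b_2(u)$ therefore gives $\prod_k\lambda_2(u-k)^{-1}=1$, which is equivalent to the restricted condition for $\lambda_2$. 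So $\lambda(u)$ is a restricted tuple.

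\textbf{Step 3: the surjection $\zpl\to L$.} The elements $e^{(r)}$ and $d_i^{(r)}-\lambda_i^{(r)}$ annihilate $v$. Hence the map $y\mapsto y.v$ factors through the left ideal defining $\zpl$, giving a homomorphism $\zpl\to L$ with $1_{\lambda(u)}\mapsto v$. If $v$ is odd, we compose with the parity shift; the classification is presumably up to parity shift, or $\zpl$ can be regarded with either parity. Since $L$ is simple and $v\ne0$, this map is surjective. Its kernel is then a maximal submodule, which by Proposition \ref{prop: zpl unique max submodule} is the unique one. Therefore $L\cong L^{[p]}(\lambda(u))$.

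The only delicate points are the stability check in Step 1 and the handling of the inverse in Step 2, namely that $d_2'(u)$ acts by $\lambda_2(u)^{-1}$ on $v$. The latter follows from $\sum_t d_2^{(t)}d_2'^{(r-t)}=\delta_{r,0}$ together with an induction on $r$, which shows that $v$ is also an eigenvector of each $d_2'^{(r)}$.
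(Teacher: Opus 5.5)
Your proposal is correct and follows essentially the same route as the paper. Both arguments obtain $L^0\neq(0)$ from Lemma \ref{Lemma: L0 nonzero}, show that $L^0$ is stable under the commuting $d_i^{(r)}$, take a common eigenvector whose eigenvalue tuple is restricted because $b_i(u)=1$ in $\yp$, and then combine the surjection $Z^{[p]}(\lambda(u))\twoheadrightarrow L$ with Proposition \ref{prop: zpl unique max submodule}; your handling of $d_2(u)^{-1}$ in $b_2(u)$ and of the parity of the eigenvector just fills in details the paper passes over.
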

		\begin{proof}
			By Lemma \ref{Lemma: L0 nonzero}, we know that $L^0\neq (0)$.
			Next we show that the subsapce $L^0$ is invariant with respect to the action of all elements $d_i^{(r)}$. 
			If $v\in L^0$, then \eqref{bracket dir esfs} implies that $[d^{(r)},e^{(s)}].v=0$ for all $r,s>0$. It follows that $e^{(s)}d_i^{(r)}.v=0$. As a result, $d_i^{(r)}.v\in L^0$.
			
			Furthermore, \eqref{bracket dirdjs} implies that the elements $d_i^{(r)}$ with $i=1,2$ and $r>0$ act on $L^0$ as pairwise commuting operators.
			Hence, there exists a non-zero vector $\zeta\in L^0$ such that $d_i^{(r)}.\zeta=\lambda_i^{(r)}\zeta$, where $\lambda_i^{(r)}\in\kk$.
			Letting $\lambda_i(u):=1+\sum_{r\geq 1}\lambda_i^{(r)}u^{-r}\in\kk[[u^{-1}]]$, we put $\lambda(u):=(\lambda_1(u),\lambda_2(u))$.
			Then $d_i(u).\zeta=\lambda_i(u)\zeta$.
			Since $1=b_i(u)=d_i(u)d_i(u-1)\cdots d_i(u-p+1)$ in $\yp$ (Remark \ref{Remark: generators of p-center}),
			it follows that $\lambda(u)$ is restricted.
			As $L$ is irreducible, there results a surjective homomorphism $Z^{[p]}(\lambda(u))\twoheadrightarrow L;~1_{\lambda(u)}\mapsto \zeta$.
			Proposition \ref{prop: zpl unique max submodule} now yields an isomorphism $L\cong L^{[p]}(\lambda(u))$.
		\end{proof}
		\subsection{Finite dimensionality conditions}
		For any $\lambda_1,\lambda_2\in \FF_p$, we consider the restricted irreducible $\fgl_{1|1}$-module $L(\lambda_1,\lambda_2)$. 
		The module $L(\lambda_1,\lambda_2)$ is generated by a vector $\xi$ and 
		\begin{align}\label{eij act on xi}
			e_{i,i}.\xi=\lambda_i\xi;~i=1,2, \quad e_{1,2}.\xi=0.
		\end{align}
		Since 
		\begin{align}\label{e12e21xi}
			e_{1,2}e_{2,1}.\xi=[e_{1,2},e_{2,1}].\xi=(e_{1,1}+e_{2,2}).\xi=(\lambda_1+\lambda_2)\xi,   
		\end{align}
		the module $L(\lambda_1,\lambda_2)$ is two-dimensional if $\lambda_1+\lambda_2\neq 0$ and one-dimensional otherwise.
		Lemma \ref{Lemma: restricted ev hom} allows us to view $L(\lambda_1,\lambda_2)$ as a $\yp$-module. 
		We will keep the same notation $L(\lambda_1,\lambda_2)$ for this $\yp$-module and call it the {\it evaluation module}.
		By Proposition \ref{prop: zpl defind by rtt},
		$L(\lambda_1,\lambda_2)$ is isomorphic to the module $L^{[p]}(\lambda_1(u),\lambda_2(u))$, where $\lambda_1(u)=1+\lambda_1u^{-1}$ and $\lambda_2(u)=1-\lambda_2u^{-1}$.
		
		Since $\yp$ is a Hopf quotient of $\Y$ (Theorem \ref{thm: yp hopf}), the comultiplication of $\Y$ descends to $\yp$, and we will use the same symbol $\Delta$ for the comultiplication of $\yp$.
		If $L$ and $M$ are any two $\yp$-modules, then the tensor product $L\otimes M$ can be equipped with a $\yp$-action with the use of the comultiplication $\Delta$ on $\yp$ by the rule
		\[
		x.(l\otimes m):=\Delta(x)(l\otimes m),\quad x\in\yp, l\in L, m\in M.
		\]
		
		Denote by $\tau$ the anti-automorphism of the algebra $\Y$, defined by
		\begin{align}\label{tau-antiauto}
			\tau:\Y\rightarrow \Y;~t_{i,j}(u)\rightarrow t_{3-i,3-j}(u).   
		\end{align}
		This is the composition of the anti-automorphism (\cite[(2.15)]{Na20}) and automorphism (\cite[Lemma 1]{Gow07}) of $\Y$.
		By Section \ref{subsection:hopf}, $\tau$ induces the anti-automorphism of the algebra $\yp$, which we again denote by $\tau$.
		Given any finite dimensional $\yp$-module $L$, we will denote by $L^*$ the dual vector space whose elements are linear maps $\omega:L\rightarrow\kk$. Equip $L^*$ with an $\yp$-module structure by setting
		\[
		(y.\omega)(\eta):=\omega(\tau(y).\eta)\quad \text{for}\quad y\in\yp\quad \text{and}\quad \omega\in L^*, \eta\in L.
		\]
		Using the same argument as in \cite[p. 110]{Mol07} one immediately verifies that $L(\lambda_1,\lambda_2)^*$ is also irreducible.
		Let us consider a two-dimensional irreducible module $L(\lambda_1,\lambda_2)$.
		The module $L(\lambda_1,\lambda_2)^*$ can be generated by the vector $\xi^*$ defined by $\xi^*(\xi)=1$ and $\xi^*(e_{2,1}.\xi)=0$. 
		Direct computation shows that 
		\[
		e_{1.2}.\xi^*=0, e_{1,1}.\xi^*=-\lambda_2\xi^*, e_{2,2}.\xi^*=-\lambda_1\xi^*.
		\]
		Observing \eqref{eij act on xi}, we thus obtain $L(\lambda_1,\lambda_2)^*\cong L(-\lambda_2,-\lambda_1)$.
		
		Now let $\lambda_1(u)$ and $\lambda_2(u)$ be two restricted polynomials in $u^{-1}$ of degree not more than $k$.
		Write the decompositions
		\begin{align*}
			\lambda_1(u):=(1+\lambda_1^{(1)}u^{-1})\cdots(1+\lambda_1^{(k)}u^{-1}),\\
			\lambda_2(u):=(1-\lambda_2^{(1)}u^{-1})\cdots(1-\lambda_2^{(k)}u^{-1}).
		\end{align*}
		Thanks to \cite[Lemma 3.2]{CHT25}, we have all the constants $\lambda_i^{(r)}$ belong to the finite field $\FF_p$.
		
		The following result is the modular analogue \cite[Theorem 5]{Zhang95}.
		Our proof is adapted from \cite[Proposition 3.7]{CHT25} (see also \cite[Proposition 3.3.2]{Mol07}).
		Since our setting is different from that given in \cite[p. 112]{Mol07}, 
		we provide a detailed proof here.
		\begin{Proposition}\label{Prop:tensor product module irre}
			Suppose that $\lambda_1^{(i)}+\lambda_2^{(j)}\neq 0$ for all $1\leq i,j\leq k$. Then the representation $L^{[p]}(\lambda_1(u),\lambda_2(u))$ of $\yp$ is isomorphic to the tensor product module
			\begin{align}\label{tensor product module}
				L(\lambda_1^{(1)},\lambda_2^{(1)})\otimes L(\lambda_1^{(2)},\lambda_2^{(2)})\otimes \cdots \otimes L(\lambda_1^{(k)},\lambda_2^{(k)}).
			\end{align}
		\end{Proposition}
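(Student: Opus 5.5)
Let $V$ denote the tensor product \eqref{tensor product module} and let $\xi_s$ be the highest vector of the $s$-th factor $L(\lambda_1^{(s)},\lambda_2^{(s)})$. The hypothesis with $i=j=s$ gives $\lambda_1^{(s)}+\lambda_2^{(s)}\neq 0$, so by \eqref{e12e21xi} each factor is two-dimensional, with basis $\xi_s,\ e_{2,1}.\xi_s$. Hence $\dim V=2^k$.

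The plan has three steps:
\begin{enumerate}
\item show that $\xi:=\xi_1\otimes\cdots\otimes\xi_k$ is a highest weight vector of weight $(\lambda_1(u),\lambda_2(u))$;
\item show that $V$ is irreducible;
\item conclude from the uniqueness of simple quotients.
\end{enumerate}

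\emph{Step 1.} In each factor the evaluation map \eqref{evaluation map} gives $t_{1,2}(u).\xi_s=0$, $t_{1,1}(u).\xi_s=(1+\lambda_1^{(s)}u^{-1})\xi_s$ and $t_{2,2}(u).\xi_s=(1-\lambda_2^{(s)}u^{-1})\xi_s$. Iterate the coproduct \eqref{comul and antipode}:
\[
\Delta^{(k)}(t_{i,j}(u))=\sum t_{i,a_1}(u)\otimes t_{a_1,a_2}(u)\otimes\cdots\otimes t_{a_{k-1},j}(u),
\]
with the signs coming from the super tensor product. For $t_{1,2}(u)$ applied to $\xi$, every summand contains a factor $t_{1,2}$ acting on some $\xi_s$, or else a $t_{2,b}$ sitting to the right of an earlier index $1$. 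In every summand the rightmost transition from index $1$ to index $2$ is a $t_{1,2}$, and it acts on a highest vector. So $t_{1,2}(u).\xi=0$.

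For $t_{i,i}(u)$, the only surviving term is the diagonal one, which gives the products $\lambda_1(u)$ and $\lambda_2(u)$. Any off-diagonal path must at some point go up via $t_{1,2}$, and that kills the vector. (Here $t_{2,1}$ followed later by $t_{1,2}$ contributes zero because the $t_{1,2}$ acts on some $\xi_s$.)

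By Proposition \ref{prop: zpl defind by rtt}, there is then a homomorphism $Z^{[p]}(\lambda(u))\to V$ with $1_{\lambda(u)}\mapsto\xi$.

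\emph{Step 2.} Irreducibility is the main obstacle. Following \cite[Proposition 3.7]{CHT25} and \cite[Proposition 3.3.2]{Mol07}, argue by induction on $k$, and prove two claims about $V$:
\begin{enumerate}
\item[(a)] every nonzero vector $\eta\in V$ with $t_{1,2}(u).\eta=0$ is proportional to $\xi$;
\item[(b)] $\xi$ generates $V$.
\end{enumerate}
Given both, any nonzero submodule $N\subseteq V$ has $N^0\neq0$ by Lemma \ref{Lemma: L0 nonzero} (using $e(u)=d_1(u)^{-1}t_{1,2}(u)$), so (a) gives $\xi\in N$ and (b) gives $N=V$.

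Write $V=W\otimes L$ with $L=L(\lambda_1^{(k)},\lambda_2^{(k)})$, and assume $W$ is irreducible by induction.

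For (a), write $\eta=w_0\otimes\xi_k+w_1\otimes e_{2,1}\xi_k$. The component of $t_{1,2}(u)\eta$ along $e_{2,1}\xi_k$ comes from $t_{1,1}(u)\otimes t_{1,2}(u)+t_{1,2}(u)\otimes t_{2,2}(u)$. These rational identities in $u$ force $t_{1,2}(u).w_1=0$ and $w_1\propto\xi_1\otimes\cdots\otimes\xi_{k-1}$. The nonvanishing of $\lambda_1^{(i)}+\lambda_2^{(k)}$ is what then forces $w_1=0$: it guarantees that a pole of $(u+\lambda_1^{(i)})$ type does not cancel one of $(u-\lambda_2^{(k)})$ type. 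After that, $t_{1,2}(u).w_0=0$ and induction gives $\eta\propto\xi$.

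For (b), symmetrically, use the dual modules and $L(\lambda_1,\lambda_2)^*\cong L(-\lambda_2,-\lambda_1)$. The hypothesis is symmetric under $(\lambda_1,\lambda_2)\mapsto(-\lambda_2,-\lambda_1)$, and $\tau$ reverses tensor order. Applying (a) to $V^*$ shows that the vector $\xi^*$ generates any submodule of $V^*$ it lies in; equivalently, no proper submodule of $V$ contains $\xi$, so $\xi$ is cyclic.

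The delicate computation is the explicit coefficient comparison in $u$, where the hypothesis on the sums $\lambda_1^{(i)}+\lambda_2^{(j)}$ enters. Alternatively, one can check directly that the $2^k$ vectors $f^{(r_1)}\cdots f^{(r_m)}.\xi$ span $V$, using a Vandermonde-type determinant in the distinct parameters.

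\emph{Step 3.} $V$ is a simple quotient of $Z^{[p]}(\lambda(u))$. By Proposition \ref{prop: zpl unique max submodule} it is isomorphic to $L^{[p]}(\lambda_1(u),\lambda_2(u))$.
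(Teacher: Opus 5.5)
Your proposal is correct and follows essentially the paper's proof. You show $\xi$ is a highest-weight vector, prove by induction on $k$ that every vector killed by $t_{1,2}(u)$ is proportional to $\xi$, get cyclicity of $\xi$ by applying this to $V^*\cong\bigotimes_i L(-\lambda_2^{(i)},-\lambda_1^{(i)})$ through the annihilator of $\yp.\xi$, and finish with uniqueness of the simple quotient of $Z^{[p]}(\lambda(u))$. The only real difference is in the induction: you split off the last factor and in effect evaluate at $u=\lambda_2^{(k)}$, using $\lambda_1^{(i)}+\lambda_2^{(k)}\neq 0$, whereas the paper splits off the first factor and evaluates at $u=-\lambda_1^{(1)}$.

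A few minor corrections:
\begin{itemize}
\item What claim (a) for $V^*$ actually gives is that every nonzero submodule of $V^*$ contains $\xi^*$. A proper ${\rm Ann}(\yp.\xi)$ cannot contain it, since $\xi^*(\xi)=1$.
\item The paper has $\Delta\circ\tau=(\tau\otimes\tau)\circ\Delta$, so $\tau$ does not reverse the tensor order. This is harmless, because the hypothesis is symmetric in the factors.
\item Drop the closing Vandermonde alternative, since the parameters need not be distinct.
\end{itemize}
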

		\begin{proof}
			Denote the module \eqref{tensor product module} by $L$. 
			Let $\xi_i$ be the generating vector of $L(\lambda_1^{(i)},\lambda_2^{(i)})$ for $i=1,\dots, k$.
			Using the definition \eqref{comul and antipode} of $\Delta$,
			we obtain $t_{1,2}(u).\xi=0$ and $t_{i,i}(u).\xi=\lambda_i(u)\xi$,
			where $\xi=\xi_1\otimes\cdots\otimes \xi_k$.
			The proposition will follow if we prove that the module $L$ is irreducible.
			
			We claim that any vector $\zeta\in L$ satisfying $t_{1,2}(u).\zeta=0$ is proportional to $\xi$. Now proceed by induction on $k$.
			By assumption, $\lambda_1^{(1)}+\lambda_2^{(1)}\neq 0$.
			The case $k=1$ immediately follows
			from \eqref{e12e21xi}.
			Suppose that $k\geq 2$.
			Write any such vector 
			\[
			\zeta=\xi_1\otimes \zeta_1+e_{2,1}.\xi_1\otimes \zeta_2,\quad \text{where}~\zeta_r\in L(\lambda_1^{(2)},\lambda_2^{(2)})\otimes \cdots \otimes L(\lambda_1^{(k)},\lambda_2^{(k)}),~r=1,2.
			\]
			We first assume that $\zeta_2\neq 0$.
			Applying $t_{1,2}(u)$ to $\zeta$, we get
			\begin{align}\label{proof zhongde formula-1}
				t_{1,2}(u)e_{2,1}.\xi_1\otimes t_{2,2}(u).\zeta_2+t_{1,1}(u).\xi_1\otimes t_{1,2}(u).\zeta_1+t_{1,1}(u)e_{2,1}.\xi_1\otimes t_{1,2}(u).\zeta_2=0      
			\end{align}
			Using the definition of the Yangian action on $L(\lambda_1^{(1)},\lambda_2^{(1)})$ in conjunction with \eqref{e12e21xi}, we obtain
			\[
			t_{1,2}(u)e_{2,1}.\xi_1= u^{-1}e_{1,2}e_{2,1}.\xi_1=u^{-1}(\lambda_1^{(1)}+\lambda_2^{(1)})\xi_1,
			\]
			and
			\[
			t_{1,1}(u)e_{2,1}.\xi_1=(1+e_{1,1}u^{-1})e_{2,1}.\xi_1=(1+(\lambda_1^{(1)}-1)u^{-1})e_{2,1}.\xi_1.
			\]
			Hence, taking the coefficient of $e_{2,1}.\xi_1$ in \eqref{proof zhongde formula-1} gives
			\[
			(1+(\lambda_1^{(1)}-1)u^{-1})t_{1,2}(u).\zeta_2=0,  
			\]
			implying the relation $t_{1,2}(u).\zeta_2=0$.
			The induction hypothesis implies that $\zeta_2$ must be proportional to $\xi_2\otimes\cdots\otimes \xi_k$.
			Therefore, we obtain by \eqref{comul and antipode} and \eqref{evaluation map} that
			\begin{align}\label{proof zhongde formula-2}
				t_{2,2}(u).\zeta_2=(1-\lambda_2^{(2)}u^{-1})\cdots ((1-\lambda_2^{(k)}u^{-1}))\zeta_2.
			\end{align}
			Then taking the coefficient of $\xi_1$ in \eqref{proof zhongde formula-1} we derive
			\[
			(1+\lambda_1^{(1)}u^{-1})t_{1,2}(u).\zeta_1+u^{-1}(\lambda_1^{(1)}+\lambda_2^{(1)})t_{2,2}(u).\zeta_2=0.
			\]
			Hence, multiplying by $u^k$ and taking into account \eqref{proof zhongde formula-2} we get
			\[
			(u+\lambda_1^{(1)})u^{k-1}t_{1,2}(u).\zeta_1+(\lambda_1^{(1)}+\lambda_2^{(1)})(u-\lambda_2^{(2)})\cdots (u-\lambda_2^{(k)})\zeta_2=0.
			\]
			By taking the value $u=-\lambda_1^{(1)}$ we obtain the relation
			\[
			(\lambda_1^{(1)}+\lambda_2^{(1)})(\lambda_1^{(1)}+\lambda_2^{(2)})\cdots (\lambda_1^{(1)}+\lambda_2^{(k)})=0.  
			\]
			But this contradicts the conditions. 
			Thus, $\zeta_2=0$.
			That is, $\zeta=\xi_1\otimes \zeta_1$. 
			The above argument in conjunction with induction hypothesis prove the claim.
			
			Suppose now that $M$ is a nonzero submodule of $L$.
			Acccording to Lemma \ref{Lemma: L0 nonzero} and \eqref{t12 t21 drinfeld},
			$M$ must contain a nonzero vector $\zeta$ such that $t_{1,2}(u).\zeta=0$.
			The foregoing observation implies that $M$ contains the vector $\xi$.
			It remains to prove that the cyclic span $K:=\yp.\xi$ coincides with $L$.
			Recall that we already have $L(\lambda_1^{(i)},\lambda_2^{(i)})^*\cong L(-\lambda_2^{(i)},-\lambda_1^{(i)})$.
			Observe that $\Delta\circ\tau=(\tau\otimes\tau)\circ\Delta$,
			it is easy to verify that (cf. \cite[Proposition 3.2.12]{Mol07}) the dual module $L^*$ is isomorphic to the tensor product
			\[
			L(-\lambda_2^{(1)},-\lambda_1^{(1)})\otimes L(-\lambda_2^{(2)},-\lambda_1^{(2)})\otimes \cdots \otimes L(-\lambda_2^{(k)},-\lambda_1^{(k)}).  
			\]
			As before, the generating vector of $L(-\lambda_2^{(i)},-\lambda_1^{(i)})\cong L(\lambda_1^{(i)},\lambda_2^{(i)})^*$ will be denoted by $\xi_i^*$.
			Remember that $\xi_i^*(\xi_i)=1$ and $\xi_i^*(e_{2,1}.\xi_i)=0$.
			Suppose now that the $K\subsetneq L$ and consider its annihilator
			\[
			{\rm Ann} K:=\{\omega\in L^*;~\omega(\eta)=0~\text{for all}~\eta\in K\}.
			\]
			Then ${\rm Ann} K$ is a nonzero submodule of $L^*$,
			which does not contain the vector $\xi_1^*\otimes\cdots\otimes\xi_k^*$ (see also \cite[p. 564]{Mol23-1}).
			However, this contradicts the claim verified in the first part of the proof,
			because the conditions on the parameters $\lambda_1^{(i)}$ and $\lambda_2^{(i)}$ stated in the theorem will remain satisfied after we replace each $\lambda_1^{(i)}$ by $-\lambda_2^{(i)}$ and each $\lambda_2^{(i)}$ by $-\lambda_1^{(i)}$.
		\end{proof}
		
		For any power series $f(u)\in 1+u^{-1}\kk[[u^{-1}]]$,
		it follows from the defining relation \eqref{def relation-series} (see also \cite[Section  3.7]{CH23}) that there is an automorphism defined via
		\begin{align}\label{auto:Multiplication by a power series}
			\mu_f:\Y\rightarrow \Y;~t_{i,j}(u)\mapsto f(u)t_{i,j}(u).  
		\end{align}
		\begin{Lemma}\label{lemma: flambdai are poly}
			If $L^{[p]}(\lambda(u))=L^{[p]}(\lambda_1(u),\lambda_2(u))$ is finite dimensional, then there exists a formal series $f(u)\in 1+u^{-1}\kk[[u^{-1}]]$ such that $f(u)\lambda_1(u)$ and $f(u)\lambda_2(u)$ are polynomials in $u^{-1}$.
		\end{Lemma}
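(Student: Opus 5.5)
The plan is to show that the ratio $\lambda_1(u)^{-1}\lambda_2(u)$ is a rational function of $u^{-1}$, using finite dimensionality through a Hankel-matrix rank argument (Kronecker's criterion). Write $\nu(u):=\lambda_1(u)^{-1}\lambda_2(u)=\sum_{r\geq 0}\nu_r u^{-r}$, with $\nu_0=1$. Suppose we can write $\nu(u)=Q(u)/P(u)$ with $P,Q\in 1+u^{-1}\kk[u^{-1}]$. Then we put
\[
f(u):=P(u)\lambda_1(u)^{-1}\in 1+u^{-1}\kk[[u^{-1}]],
\]
which gives $f(u)\lambda_1(u)=P(u)$ and $f(u)\lambda_2(u)=P(u)\nu(u)=Q(u)$. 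Both are polynomials in $u^{-1}$, as the lemma requires.

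\emph{Step 1: a Hankel matrix computed inside $L$.} Let $\xi\in L:=L^{[p]}(\lambda(u))$ be the (nonzero) image of $1_{\lambda(u)}$. Then $e(u).\xi=0$ and $d_i(u).\xi=\lambda_i(u)\xi$, hence also $d_1'(u).\xi=\lambda_1(u)^{-1}\xi$. By \eqref{erfs},
\[
e^{(r)}f^{(s)}.\xi=[e^{(r)},f^{(s)}].\xi=\sum_{t=0}^{r+s-1}d_1^{\prime(t)}d_2^{(r+s-1-t)}.\xi .
\]
The right-hand sum is the coefficient of $u^{-(r+s-1)}$ in $d_1'(u)d_2(u)$, and the $d$'s commute by \eqref{bracket dirdjs}. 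Hence $e^{(r)}f^{(s)}.\xi=\nu_{r+s-1}\xi$.

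Let $V\subseteq L$ be the span of the vectors $f^{(s)}.\xi$ for $s>0$; it has dimension at most $N:=\dim L$. Each $e^{(r)}$ maps $V$ into $\kk\xi$. Fix a linear functional $\phi$ on $\kk\xi$ with $\phi(\xi)=1$. Then the matrix $H=(\nu_{r+s-1})_{r,s\geq 1}$ is the matrix of the bilinear pairing $(r,s)\mapsto \phi(e^{(r)}f^{(s)}.\xi)$. This pairing factors through $V$, so every finite minor of $H$ of size larger than $N$ vanishes. In other words, $H$ has rank at most $N$.

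\emph{Step 2: Kronecker's criterion.} This is the main point, and I would prove it directly. Since $H$ has finite rank, there is a smallest $m$ such that the column $C_{m+1}$ is a linear combination of $C_1,\dots,C_m$. Here $C_s:=(\nu_{r+s-1})_{r\geq1}$, and the shift operator on $H$ sends $C_s$ to $C_{s+1}$ after deleting the first row. Write
\[
C_{m+1}=\sum_{s=1}^m a_sC_s,
\]
or entrywise $\nu_{r+m}=\sum_{s=1}^m a_s\nu_{r+s-1}$ for all $r\geq 1$. Deleting the first row of this identity shows that the same linear recurrence holds for all shifted columns. Consequently $(\nu_j)_{j\geq 1}$ satisfies a linear recurrence with constant coefficients. (If $m=0$, then $\nu_j=0$ for all $j\geq1$ and $\nu=1$ trivially.)

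Multiplying $\nu(u)$ by $P(u):=1-\sum_{s=1}^m a_s u^{-(m+1-s)}$ kills every coefficient of $u^{-j}$ for $j$ large. So $Q(u):=P(u)\nu(u)$ is a polynomial in $u^{-1}$ with constant term $1$. The thing to double-check is the index bookkeeping in this recurrence, namely that the degree shift makes $P\nu$ genuinely terminate. If this gets messy, I would instead invoke the standard statement that a sequence whose Hankel matrix has finite rank is eventually linearly recurrent, a statement valid over any field.

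\emph{Step 3.} Define $f(u)$ from $P$ as in the first paragraph. That completes the argument.
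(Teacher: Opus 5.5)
Your argument is correct and is essentially the proof the paper defers to (Molev's Proposition 3.3.1). There, finite dimensionality gives a linear dependence among the lowering vectors $t_{2,1}^{(s)}.\xi$ (your $f^{(s)}.\xi$), and testing it against all raising operators turns it into a linear recurrence for the coefficients of $\lambda_1(u)^{-1}\lambda_2(u)$, i.e.\ a polynomial identity $P(u)\lambda_2(u)=Q(u)\lambda_1(u)$. Your Hankel/Kronecker packaging and your use of \eqref{erfs} instead of the RTT commutator are cosmetic, and the index bookkeeping you flagged is fine: for $j=r+m\geq m+1$ the coefficient of $u^{-j}$ in $P(u)\nu(u)$ is $\nu_{r+m}-\sum_{s=1}^m a_s\nu_{r+s-1}=0$.
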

		\begin{proof}
			The proof is the same as that of \cite[Proposition 3.3.1]{Mol07}, and will be omitted here. 
		\end{proof}
		\begin{Remark}\label{remark: f-lambdai poly not ness restricted}
			Let $f(u)\in 1+u^{-1}\kk[[u^{-1}]]$ be a formal power series.
			If the automorphism $\mu_f$ \eqref{auto:Multiplication by a power series}  factors to an automorphism of $\yp$, then clearly $f$ must be restricted.
		\end{Remark}
		\begin{Theorem}\label{theorem: lplambda fd condition}
			The irreducible representation $L^{[p]}(\lambda(u))=L^{[p]}(\lambda_1(u),\lambda_2(u))$ is finite dimensional if and only if there exists monic polynomials $P_1(u)$ and $P_2(u)$ in $u$ such that
			\begin{align}\label{Drinfeld poly}
				\frac{\lambda_1(u)}{\lambda_2(u)}=\frac{P_1(u)}{P_2(u)},
			\end{align}
			where the polynomials $P_1(u)$ and $P_2(u)$ are of the same degree and have no common roots.
		\end{Theorem}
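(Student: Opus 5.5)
We prove the two implications separately. For necessity, suppose $L^{[p]}(\lambda(u))$ is finite dimensional. By Lemma \ref{lemma: flambdai are poly} there is $f(u)\in 1+u^{-1}\kk[[u^{-1}]]$ such that $\mu_1(u):=f(u)\lambda_1(u)$ and $\mu_2(u):=f(u)\lambda_2(u)$ are polynomials in $u^{-1}$ with constant term $1$. Then $\lambda_1(u)/\lambda_2(u)=\mu_1(u)/\mu_2(u)$. Let $k$ be the larger of the two degrees. Multiplying numerator and denominator by $u^k$ gives a ratio of two monic polynomials in $u$ of degree $k$. Cancelling common linear factors (we are over the algebraically closed field $\kk$) keeps both monic and of equal degree, and leaves them coprime. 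This direction uses nothing beyond Lemma \ref{lemma: flambdai are poly}, and it does not need $f$ to be restricted (cf. Remark \ref{remark: f-lambdai poly not ness restricted}).

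For sufficiency, assume \eqref{Drinfeld poly} holds with $P_1,P_2$ monic of degree $k$ with no common roots. Write $P_1(u)=\prod_{i}(u+a_i)$ and $P_2(u)=\prod_i(u-b_i)$, so that
\[
\frac{\lambda_1(u)}{\lambda_2(u)}=\frac{\prod_i(1+a_iu^{-1})}{\prod_i(1-b_iu^{-1})}.
\]
Coprimality gives $a_i+b_j\neq0$ for all $i,j$. We want to show $a_i,b_j\in\FF_p$, and here restrictedness enters. The ratio $\lambda_1(u)/\lambda_2(u)$ is restricted, being a quotient of restricted series. Hence
\[
\prod_{s=0}^{p-1}P_1(u-s)=\prod_{s=0}^{p-1}P_2(u-s),
\]
so $\prod_i\bigl((u+a_i)^p-(u+a_i)\bigr)=\prod_i\bigl((u-b_i)^p-(u-b_i)\bigr)$. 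The root multiset $\{-a_i+\FF_p\}$ therefore equals $\{b_j+\FF_p\}$. If some $a_i\notin\FF_p$, then $-a_i+\FF_p$ contains no root of $P_1$ other than shifts of $-a_i$, and it must contain a $b_j$; a counting argument over each coset $c+\FF_p$ then forces a common root. I expect this step to be the main obstacle. It may fail as stated, in which case I would instead aim for the weaker claim that $a_i,b_j\in\FF_p$ after a suitable restricted renormalization, following \cite[Lemma 3.2]{CHT25}.

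Granting $a_i,b_j\in\FF_p$, set $\nu_1(u)=\prod_i(1+a_iu^{-1})$ and $\nu_2(u)=\prod_i(1-b_iu^{-1})$. These are restricted, since each factor $1+cu^{-1}$ with $c\in\FF_p$ is restricted. By Proposition \ref{Prop:tensor product module irre}, $L^{[p]}(\nu_1,\nu_2)$ is a tensor product of evaluation modules, hence finite dimensional. Next put $g(u):=\lambda_1(u)/\nu_1(u)=\lambda_2(u)/\nu_2(u)$, which is restricted. By Remark \ref{Remark: generators of p-center}, $\mu_g(b_i(u))=\bigl(\prod_s g(u-s)\bigr)^{\pm1}b_i(u)=b_i(u)$, so $\mu_g$ preserves $I_p$ and descends to an automorphism of $\yp$. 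Twisting $L^{[p]}(\nu_1,\nu_2)$ by $\mu_g$ gives an irreducible module. Its highest vector is killed by $t_{1,2}(u)$, and $t_{i,i}(u)$ acts on it by $g(u)\nu_i(u)=\lambda_i(u)$. By Proposition \ref{prop: zpl defind by rtt} and Theorem \ref{thm: fd irr iso lplambda}, this twisted module is $L^{[p]}(\lambda(u))$, so $L^{[p]}(\lambda(u))$ is finite dimensional.
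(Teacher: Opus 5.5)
Your necessity argument is the same as the paper's and is fine. The sufficiency argument has a genuine gap, exactly at the step you flagged: the claim $a_i,b_j\in\FF_p$ is false.

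\textbf{Counterexample.} Take $k=1$, $c\in\kk\setminus\FF_p$, $P_1(u)=u+c$ and $P_2(u)=u+c+1$. These are monic, coprime and of equal degree. Both products below run over a full residue system mod $p$, so
\[
\prod_{s=0}^{p-1}\frac{u-s+c}{u-s+c+1}=\frac{u^p-u+c^p-c}{u^p-u+c^p-c}=1.
\]
Hence $\lambda_1(u)=(1+cu^{-1})(1+(c+1)u^{-1})^{-1}$, $\lambda_2(u)=1$ is a restricted tuple satisfying \eqref{Drinfeld poly}, yet $-c\notin\FF_p$.

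Your coset identity $\{a_i+\FF_p\}=\{-b_j+\FF_p\}$ is correct. However, two distinct roots lying in the same coset are not forced to coincide, so no common root follows. The fallback fails as well. Multiplying $\lambda_1,\lambda_2$ by a common series does not change $\lambda_1/\lambda_2$, hence does not change the unique coprime pair $P_1,P_2$ or its roots.

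Consequently Proposition \ref{Prop:tensor product module irre} is unavailable. For $a\notin\FF_p$, the central series $b_1(u)$ acts on the evaluation module $L(a,b)$ by $1+(a^p-a)/(u^p-u)\neq 1$. Since $\Delta(b_1(u))=b_1(u)\otimes b_1(u)$, the tensor product is then not a $\yp$-module either.

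\textbf{The missing idea.} This is what the paper does: build the module over the unrestricted $\Y$, and pass to $\yp$ only after a non-restricted normalization.
\begin{enumerate}
\item Put $\nu_1(u)=\prod_i(1+a_iu^{-1})$ and $\nu_2(u)=\prod_i(1-b_iu^{-1})$. Let $L$ be the tensor product of the $\Y$-modules $L(a_i,b_i)$, pulled back along $\ev$ rather than $\ev^{[p]}$. Then $L$ is finite dimensional, and $\xi=\xi_1\otimes\cdots\otimes\xi_k$ satisfies $t_{1,2}(u).\xi=0$ and $t_{i,i}(u).\xi=\nu_i(u)\xi$.
\item Twist the cyclic span $\Y.\xi$ by $\mu_f$ with $f=\nu_2(u)^{-1}$. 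This is an automorphism of $\Y$, though in general not of $\yp$. Afterwards $\xi$ has weight $(\lambda_1/\lambda_2,1)$, which is restricted.
\item So $b_1(u)$ and $b_2(u)$ act on $\xi$ by $1$. Their coefficients are central, so they act by $1$ on the whole cyclic module. That module is therefore a finite dimensional $\yp$-module and a quotient of $Z^{[p]}(\lambda_1/\lambda_2,1)$. Hence $L^{[p]}(\lambda_1/\lambda_2,1)$ is finite dimensional.
\item Your last step, twisting by a restricted series (which does descend to $\yp$), now applies with $g=\lambda_2$ and yields $L^{[p]}(\lambda(u))$.
\end{enumerate}
Irreducibility of the tensor product, and hence Proposition \ref{Prop:tensor product module irre}, is never needed.
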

		\begin{proof}
			Suppose that the representation $L^{[p]}(\lambda_1(u),\lambda_2(u))$ is finite dimensional. Then by Lemma \ref{lemma: flambdai are poly} we can find a formal series $f(u)$ such that
			\begin{align*}
				f(u)\lambda_1(u)=(1+\lambda_1^{(1)}u^{-1})\cdots(1+\lambda_1^{(k)}u^{-1}),\\
				f(u)\lambda_2(u)=(1+\lambda_2^{(1)}u^{-1})\cdots(1+\lambda_2^{(k)}u^{-1}),
			\end{align*}
			for some $k\geq 0$ and some scalars $\lambda_i^{(r)}$.
			It follows that
			\[
			\frac{\lambda_1(u)}{\lambda_2(u)}=\frac{f(u)\lambda_1(u)}{f(u)\lambda_2(u)}=\frac{(1+\lambda_1^{(1)}u^{-1})\cdots(1+\lambda_1^{(k)}u^{-1})}{(1+\lambda_2^{(1)}u^{-1})\cdots(1+\lambda_2^{(k)}u^{-1})}.
			\]
			Without loss of generality, we may
			assume further that $\lambda_1^{(i)}\neq \lambda_2^{(j)}$ for any $1\leq i,j\leq k$.
			Then the polynomials 
			\[
			P_1(u):=(u+\lambda_1^{(1)})\cdots(u+\lambda_1^{(k)})\quad \text{and}\quad P_2(u):=(u+\lambda_2^{(1)})\cdots(u+\lambda_2^{(k)})
			\]
			satisfy \eqref{Drinfeld poly}.
			
			Conversely, suppose \eqref{Drinfeld poly} holds for polynomials 
			\[
			P_1(u)=(u+\mu_1^{(1)})\cdots(u+\mu_1^{(s)})\quad \text{and}\quad P_2(u)=(u-\mu_2^{(1)})\cdots(u-\mu_2^{(s)}).
			\]
			Set 
			\begin{align*}
				\mu_1(u):=(1+\mu_1^{(1)}u^{-1})\cdots(1+\mu_1^{(s)}u^{-1}),\\
				\mu_2(u):=(1-\mu_2^{(1)}u^{-1})\cdots(1-\mu_2^{(s)}u^{-1}).
			\end{align*}
			By construction we have
			\begin{align}\label{useful formula-1}
				\frac{\mu_1(u)}{\mu_2(u)}=\frac{P_1(u)}{P_2(u)}.  
			\end{align}
			For each $1\leq r\leq s$, we consider the irreducibe $\fgl_{1|1}$-module $L(\mu_1^{(r)},\mu_2^{(r)})$ which is generated by $\xi_r$ and the module structure is given by
			\begin{align}\label{proof zhong de 2d module}
				e_{i,i}.\xi_r=\mu_i^{(r)}\xi_r, i=1,2,\quad e_{1,2}.\xi_r=0.
			\end{align}
			Since $\mu_1^{(r)}+\mu_2^{(r)}\neq 0$ by assumption,
			the module $L(\mu_1^{(r)},\mu_2^{(r)})$ is two-dimensional with basis $\{\xi_r,e_{2,1}.\xi_r\}$.
			We can regard them as $\Y$-modules via \eqref{evaluation map} consider the tensor product module
			\[
			L:=L(\mu_1^{(1)},\mu_2^{(1)})\otimes\cdots\otimes L(\mu_1^{(s)},\mu_2^{(s)}). 
			\]
			Obviously, this module is finite dimensional.
			We put $\xi:=\xi_1\otimes\cdots\otimes\xi_s$.
			Using the comultiplication \eqref{comul and antipode} in conjunction
			with \eqref{proof zhong de 2d module} one can show by direct computation that
			\[
			t_{i,i}(u).\xi=\mu_i(u)\xi, i=1,2, \quad t_{1,2}(u).\xi=0.
			\]
			Let $M:=\Y.\xi\subseteq L$ be the cyclic span of $L$.
			Twisting the action of $\Y$ on $M$ by the automorphism \eqref{auto:Multiplication by a power series} with $f(u)=\mu_2(u)^{-1}$,
			we obtain a $\Y$-module which will be denoted by $\widetilde{M}$.
			Clearly, $\widetilde{M}$ is also generated by $\xi$.
			By abuse of notation, we still write $\widetilde{M}=\Y.\xi$.
			Thanks to \eqref{Drinfeld poly} and \eqref{useful formula-1}, we have
			\[
			t_{1,1}(u).\xi=\frac{\mu_1(u)}{\mu_2(u)}\xi=\frac{\lambda_1(u)}{\lambda_2(u)}\xi,\quad t_{2,2}(u).\xi=\xi,\quad t_{1,2}(u).\xi=0.
			\]
			Notice that $\lambda_1(u)/\lambda_2(u)$ is restricted.
			This implies that $\widetilde{M}=\Y.\xi=\yp.\xi$,
			and there results a surjective homomorphism 
			\[
			Z^{[p]}(\frac{\lambda_1(u)}{\lambda_2(u)},1)\twoheadrightarrow\widetilde{M}=\yp.\xi.
			\]
			As a result, $L^{[p]}(\frac{\lambda_1(u)}{\lambda_2(u)},1)$ is finite dimensional.
			Since $\lambda_2(u)$ is restricted, we apply again the twisted action of $\yp$ on $L^{[p]}(\frac{\lambda_1(u)}{\lambda_2(u)},1)$ with $f(u)=\lambda_2(u)$ (see Remark \ref{remark: f-lambdai poly not ness restricted}) to obtain the module $L^{[p]}(\lambda_1(u),\lambda_2(u))$.
			We conclude that the module
			$L^{[p]}(\lambda_1(u),\lambda_2(u))$ is also finite dimensional.
		\end{proof}
		\begin{Remark}
			Suppose that $Q_1(u)$ and $Q_2(u)$ are another two co-prime polynomials and 
			\[
			\frac{P_1(u)}{P_2(u)}=\frac{Q_1(u)}{Q_2(u)}.
			\]
			This means that $P_1(u)Q_2(u)=P_2(u)Q_1(u)$.
			As $P_1$ and $P_2$ have no common roots, 
			it follows immediately that $P_i(u)=Q_i(u)$ for $i=1,2$,
			so that the monic polynomials occurring in the above theorem are unique.
			We will refer them as the modular Drinfeld polynomials of the representations, cf. \cite{Drin88}. 
			This is somewhat different from the modular Yangian case (\cite[Remark, p. 6985]{Kal20} and \cite[Remark 3]{CHT25}).
		\end{Remark}
		\section{Restricted shifted super Yangians}\label{section name: restricted shifted super Yangian}
		Over the field of complex numbers, Brown, Brundan and Goodwin \cite{BBG13} introduced the {\it (truncated) shifted super Yangians} for $\fgl_{1|1}(\mathbb{C})$ which parallel the definition of shifted Yangians from \cite{BK06} (Actually, this can be defined for all $\fgl_{m|n}$, cf. \cite[Section 4]{Peng21} for example).
		In this section, we will study the modular representations of (restricted) truncated shifted Yangians for $\fgl_{1|1}$.
		\subsection{Shifted super Yangains}\label{section name:Shifted super Yangains}
		Let $\sigma=(s_{i,j})_{1\leq i,j\leq 2}$ be a $2\times 2$ matrix of nonnegative integers with $s_{i,i}=0$ for $i=1,2$.
		We refer to such a matrix as a {\it shift matrix}.
		Following \cite{BBG13} (see also \cite{BK06}),
		the {\it shifted super Yangian} associated to the matrix $\sigma$, denoted by $\Y_\sigma$,
		is the superalgebra with even generators $\{d_i^{(r)};~i=1,2, r>0\}$ and odd generators $\{e^{(r)},f^{(s)};~r>s_{1,2},s>s_{2,1}\}$ subject to all of the relations from Theorem \ref{thm: Drinfeld presentation} that make sense.
		Clearly, there is a homomorphism $\Y_\sigma\rightarrow \Y$ that sends the generators of $\Y$ to the generators with the same name in $\Y$. 
		Using the PBW theorem (Theorem \ref{theorem: PBW-drinfeld}), 
		we see that the homomorphism $\Y_\sigma\rightarrow \Y$ is injective (cf. \cite[Theorem 2.8]{BBG13} and \cite[Corollary 4.5]{Peng21}).
		Hence $\Y_\sigma$ can be identified as a subalgebra of $\Y$.
		
		Assume that $s_{1,2}>0$.
		Define $\sigma_+$ by
		\begin{equation*}
			\sigma_+:=
			\begin{pmatrix}
				0 & s_{1,2}-1 \\
				s_{2,1} & 0
			\end{pmatrix}.
		\end{equation*}
		Similarly, if $s_{2,1}>0$, then we define the shift matrix $\sigma_-$ by
		\begin{equation*}
			\sigma_-:=
			\begin{pmatrix}
				0 & s_{1,2} \\
				s_{2,1}-1 & 0
			\end{pmatrix}.
		\end{equation*}
		It follows from \cite[(2-14), (2-15)]{BBG13} that there exists well-defined algebra homomorphisms
		\begin{align}\label{delta +}
			\Delta_+:&\Y_\sigma\rightarrow \Y_{\sigma_+}\otimes \,U(\fgl_1)\\
			d_1^{(r)}\mapsto d_1^{(r)}\otimes 1,&\quad  d_2^{(r)}\mapsto d_2^{(r)}\otimes 1-d_{2}^{(r-1)}\otimes e_{1,1},\nonumber\\
			e^{(r)}\mapsto e^{(r)}\otimes 1-e^{(r-1)}\otimes e_{1,1},&\quad  f^{(r)}\mapsto f^{(r)}\otimes 1,\nonumber
		\end{align}
		and 
		\begin{align}\label{delta -}
			\Delta_-:&\Y_\sigma\rightarrow U(\fgl_1)\otimes \Y_{\sigma_-}\\
			d_1^{(r)}\mapsto 1\otimes d_1^{(r)},&\quad  d_2^{(r)}\mapsto 1\otimes d_2^{(r)}-e_{1,1}\otimes d_{2}^{(r-1)},\nonumber\\
			f^{(r)}\mapsto 1\otimes f^{(r)}-e_{1,1}\otimes f^{(r-1)},&\quad  e^{(r)}\mapsto 1\otimes e^{(r)},\nonumber
		\end{align}
		where $U(\fgl_1)$ is the universal enveloping algebra of the one-dimensional Lie algebra $\fgl_1$ with basis $e_{1,1}$.
		
		Let $\sigma=(s_{i,j})_{1\leq i,j\leq 2}$ be a shift matrix. 
		Suppose also that we are given an integer $\ell\geq s_{1,2}+s_{2,1}$, and set
		\begin{align}\label{definition of k}
			k:=\ell-s_{1,2}-s_{2,1}\geq 0.
		\end{align}
		By \cite[Lemma 2.1]{BBG13}, we can iterate $\Delta_+$ a total of $s_{1,2}$ times, $\Delta_-$ a total of $s_{2,1}$ times and the comultiplication $\Delta$ a total of $k-1$ times.
		There results a well-defined morphism 
		\[
		\Delta_{\sigma,\ell}:\Y_\sigma\rightarrow U(\fgl_1)^{\otimes s_{2,1}}\otimes \Y^{\otimes k}\otimes \,U(\fgl_1)^{\otimes s_{1,2}}.
		\]
		Let 
		\[
		U_{\sigma,\ell}:=U(\fgl_1)^{\otimes s_{2,1}}\otimes U(\fgl_{1|1})^{\otimes k}\otimes U(\fgl_1)^{\otimes s_{1,2}}. 
		\]
		Recalling \eqref{evaluation map}, we obtain a homomorphism
		\[
		\ev_{\sigma,\ell}:=(\id^{\otimes s_{2,1}}\otimes \ev^{\otimes k}\otimes \id^{\otimes s_{1,2}})\circ\Delta_{\sigma,\ell}:\Y_\sigma\rightarrow U_{\sigma,\ell}.
		\]
		We define the {\it shifted super Yangian of level $\ell$}
		\[
		\Y_{\sigma,\ell}:= \ev_{\sigma,\ell}(\Y_\sigma)\subseteq U_{\sigma,\ell}.
		\]
		Let $I_{\sigma,\ell}$ be the two-sided ideal of $\Y_\sigma$ generated by the elements $d_1^{(r)}$ for $r>k$.
		
		\begin{Theorem}
			The homomorphism $\ev_{\sigma,\ell}$ induces an algebra isomorphism $\Y_\sigma/I_{\sigma,\ell}\cong\Y_{\sigma,\ell}$.
			Moreover,
			ordered supermonomials in the elements
			\begin{align}\label{Omega definition}
				\Omega:=\{d_1^{(r)};~0<r\leq k\}\cup \{d_2^{(r)};&~0<r\leq \ell\}\\
				&\cup\{e^{(r)};~s_{1,2}<r\leq s_{1,2}+k\}\cup \{f^{(r)};~s_{2,1}<r\leq s_{2,1}+k\}\nonumber
			\end{align}
			taken in any fixed ordering form a basis for $\Y_{\sigma,\ell}$.
		\end{Theorem}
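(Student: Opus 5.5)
We follow the strategy of \cite[Theorem 3.5]{BBG13} (and \cite[Theorem 6.2]{BK06}), checking that nothing breaks in characteristic $p$. The plan has four steps: show $I_{\sigma,\ell}\subseteq\ker\ev_{\sigma,\ell}$, show the images of the monomials in $\Omega$ span $\Y_{\sigma,\ell}$, show these images are linearly independent, and then conclude.

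\emph{Step 1: $\ev_{\sigma,\ell}(d_1^{(r)})=0$ for $r>k$.} We track $d_1(u)$ through the iterated maps. The homomorphisms $\Delta_\pm$ send $d_1(u)$ to $d_1(u)\otimes 1$, resp.\ $1\otimes d_1(u)$. Since $d_1(u)=t_{1,1}(u)$ is the top-left entry of $T(u)$, we have
\[
\Delta(t_{1,1}(u))=t_{1,1}(u)\otimes t_{1,1}(u)+t_{1,2}(u)\otimes t_{2,1}(u).
\]
Applying $\ev^{\otimes k}$, the image of $d_1(u)$ becomes the $(1,1)$-entry of the product of $k$ matrices $1+\mathrm{diag}(1,-1)\,(e_{i,j})\,u^{-1}$. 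Hence $u^k\ev_{\sigma,\ell}(d_1(u))$ is a polynomial in $u$ of degree $k$, and so $\ev_{\sigma,\ell}(d_1^{(r)})=0$ for $r>k$. This gives a surjection $\Y_\sigma/I_{\sigma,\ell}\twoheadrightarrow\Y_{\sigma,\ell}$.

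\emph{Step 2: spanning.} We show that $\Y_\sigma/I_{\sigma,\ell}$ is spanned by ordered supermonomials in $\Omega$. The key inputs are the following relations modulo $I_{\sigma,\ell}$:
\begin{itemize}
\item Taking $i=1$ in \eqref{bracket dir esfs} and inducting on $r$ expresses $e^{(r)}$ for $r>s_{1,2}+k$, and similarly $f^{(r)}$ for $r>s_{2,1}+k$, in terms of lower ones times $d_1$'s.
\item Relation \eqref{erfs}, combined with $d_1'(u)=d_1(u)^{-1}$ (where $d_1(u)$ is now a polynomial of degree $k$ in $u^{-1}$), shows that $d_2^{(r)}$ for $r>\ell$ lies in the subalgebra generated by lower elements.
\end{itemize}
It is convenient to first pass to $c(u):=d_1'(u)d_2(u)$, as in \cite{BBG13}, and then translate back. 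A filtration argument using the PBW theorem for $\Y_\sigma$ (inherited from Theorem~\ref{theorem: PBW-drinfeld} via the embedding $\Y_\sigma\subseteq\Y$) then reduces every monomial to an ordered monomial in $\Omega$. These relations are integral and involve no division by integers other than those already present in the characteristic-$p$ Drinfeld presentation, so they hold over $\kk$.

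\emph{Step 3: linear independence.} This is the main obstacle. We use the loop filtration on $U_{\sigma,\ell}$, or equivalently the Kazhdan-type grading in which $e_{i,j}$ placed in tensor position $m$ has degree~$1$. Taking top-degree symbols, the images of the generators in $\Omega$ become explicit elements of the associated graded algebra, which is the supersymmetric algebra $S(\mathfrak g)$ on the appropriate even and odd spaces. Concretely:
\begin{itemize}
\item $\mathrm{gr}\,\ev_{\sigma,\ell}(d_1^{(r)})$ is the $r$-th elementary symmetric polynomial in the $e_{1,1}$'s of the $k$ middle factors;
\item $\mathrm{gr}\,\ev_{\sigma,\ell}(e^{(r)})$ and $\mathrm{gr}\,\ev_{\sigma,\ell}(f^{(r)})$ are odd linear combinations of $e_{1,2}^{[m]}$, resp.\ $e_{2,1}^{[m]}$, with coefficients given by elementary symmetric polynomials;
\item similarly for $d_2^{(r)}$.
\end{itemize}
We must prove these symbols are algebraically independent in the supercommutative sense. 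For the odd part this amounts to nondegeneracy of a Vandermonde-type matrix in the auxiliary variables. For the even part we use the classical fact that $e_1,\dots,e_k$ in $k$ variables, together with $\ell$ further independent even functions, are algebraically independent over any field. That fact holds in characteristic $p$, since the polynomial ring is free over the symmetric polynomials.

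The delicate point is the odd symbols. Over $\mathbb C$ one uses the $\fgl_1$ parameters in generic position, but here we must make sure the required determinant does not vanish modulo $p$. We will first try to realise the independence by a triangularity argument with respect to the tensor position, so that only leading terms equal to $\pm1$ appear. If that fails, we will instead compare dimensions of graded pieces with the PBW count over $\mathbb{Z}$ and reduce modulo $p$.

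\emph{Step 4: conclusion.} Combining Steps 1--3, the spanning set of $\Y_\sigma/I_{\sigma,\ell}$ maps onto a linearly independent set in $\Y_{\sigma,\ell}$. Hence it is a basis of $\Y_\sigma/I_{\sigma,\ell}$, the surjection of Step 1 is injective, and both assertions follow.
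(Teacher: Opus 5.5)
The gap is in Step 3, which is exactly where characteristic $p$ has to be checked. Steps 1, 2 and 4 are sound and follow the Brown--Brundan--Goodwin argument, which the paper simply asserts goes through in characteristic $p$. In Step 3, however, you describe two possible strategies and carry out neither.

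The leading terms you list are also not the right ones. With $\deg e_{i,j}^{[c]}=1$, the degree-$r$ symbol of $\ev_{\sigma,\ell}(d_1^{(r)})$ is not $e_r$ of the $e_{1,1}$'s alone; it also contains odd--odd products such as $\bar e_{1,2}^{[c]}\bar e_{2,1}^{[c']}$ with $c<c'$. The symbol of $\ev_{\sigma,\ell}(e^{(r)})$ has cubic and higher odd terms. Its part linear in the odd variables has coefficients coming from $t_{1,1}(u)^{-1}$, which gives complete rather than elementary symmetric functions, and from the factors $1-e_{1,1}u^{-1}$ contributed by $\Delta_+$. So the algebraic independence you propose to verify is not the statement that is needed.

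Your fallback fails as stated. Full rank over $\mathbb{Z}$ or $\mathbb{Q}$ does not survive reduction modulo $p$ unless some maximal minor is a $p$-adic unit, and that is precisely the point at issue.

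Here is one way to close the gap. In $\mathrm{gr}\,U_{\sigma,\ell}=\kk[w,x,y,z]\otimes\Lambda(\theta,\eta)$, write $w_j$ and $z_j$ for the $e_{1,1}$'s of the left and right $\fgl_1$ factors, and $x_c,y_c,\theta_c,\eta_c$ for $e_{1,1},e_{2,2},e_{1,2},e_{2,1}$ in the $c$-th $\fgl_{1|1}$ factor.

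First, a general criterion. Ordered supermonomials in parity-homogeneous elements of this algebra are linearly independent provided two conditions hold. The even elements must be algebraically independent modulo the ideal generated by the $\theta$'s and $\eta$'s. The exterior-degree-one parts of the odd elements must be linearly independent over $\kk(w,x,y,z)$. To prove this, look at the lowest exterior degree occurring in a putative relation.

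Even part: modulo the odd variables, the symbols of $d_1^{(r)}$ and $d_2^{(r)}$ are $e_r(x)$ and $(-1)^r e_r(w,y,z)$. These are algebraically independent over any field.

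Odd part for the $e^{(r)}$: the exterior-degree-one part of the symbol of $e^{(r)}$ is $\sum_c\theta_c\,\gamma_{c,r}$. Here $\gamma_{c,r}$ is the coefficient of $u^{-r}$ in $u^{-1}N_c(u^{-1})/P(u^{-1})$, where $P(v)=\prod_{c}(1+x_cv)$ and $N_c(v)=\prod_{c'<c}(1+x_{c'}v)\prod_{c'>c}(1-y_{c'}v)\prod_j(1-z_jv)$.

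Suppose $\sum_c\alpha_c\gamma_{c,r}=0$ for $s_{1,2}<r\leq s_{1,2}+k$. Multiply by $P$ and compare degrees, using $\deg N_c=k+s_{1,2}-1$. This gives $\sum_c\alpha_cN_c=QP$ with $\deg Q<s_{1,2}$. Now evaluate at $v=-1/x_1,-1/x_2,\dots,-1/x_k$ in turn. At $v=-1/x_c$ we have $P(-1/x_c)=0$, $N_{c'}(-1/x_c)=0$ for $c'>c$, and $N_c(-1/x_c)\neq0$. Hence $\alpha_1=\dots=\alpha_k=0$.

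The $f^{(r)}$ are handled symmetrically, using $\prod_j(1-w_jv)$ and evaluating at $-1/x_k,\dots,-1/x_1$. This argument works in any characteristic, which is what Step 3 needs.
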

		\begin{proof}
			The proof given in characteristic zero in \cite[Theorem 3.6 and Corollary 3.6]{BBG13}, with the use of \cite[Theorem 1]{Gow07} (as stated in \cite[Theorem 3.1]{CH23}) works as well in positive characteristic.
		\end{proof}
		
		Henceforth, we will identify $\Y_{\sigma,\ell}$ with the quotient $\Y_\sigma/I_{\sigma,\ell}$, and we will abuse notation by denoting the canonical images in $\Y_{\sigma,\ell}$ of the elements $d_i^{(r)}, e^{(r)},\dots$ of $\Y_\sigma$ by the same symbols.
		Observe that the elements $\{b_i^{(rp)};~i=1,2,r>0\}$ \eqref{generators of p center} are contained in $\Y_\sigma$,  they are certainly central in the subalgebra $\Y_\sigma$. Also define the {\it $p$-center} $Z_p(\Y_\sigma)$ of $\Y_\sigma$ to be the subalgebra generated by 
		\[
		\{b_i^{(rp)};~i=1,2,r>0\}.
		\]
		Moreover, we define the {\it $p$-center} of $\Y_{\sigma,\ell}$ to be the image of the $p$-center of $\Y_\sigma$ in $\Y_{\sigma,\ell}$, and denote it by $Z_p(\Y_{\sigma,\ell})$.
		We let $Z_p(\Y_{\sigma,\ell})_+$ to be the ideal of $Z_p(\Y_{\sigma,\ell})$ generated by the elements $\{b_i^{(r)};~i=1,2, r>0\}$ (see Remark \ref{Remark: generators of p-center}) and define the {\it restricted truncated shifted super Yangian} 
		\[
		\Y_{\sigma,\ell}^{[p]}:=\Y_{\sigma,\ell}/\Y_{\sigma,\ell}Z_p(\Y_{\sigma,\ell})_+.
		\]
		\subsection{Irreducible representations of $\Y_{\sigma,\ell}^{[p]}$}
		We continue using the notations of section \ref{section name:Shifted super Yangains}.
		Then one can obtain a two-rowed {\it pyramid} $\pi$ from the tuple $(\sigma,\ell)$.
		The first row of $\pi$ consists of $k$ \eqref{definition of k} boxes and the section row consists of $\ell$ boxes.
		If $k>0$, then $\pi$ has $s_{2,1}$ columns of height $1$ on its left side and $s_{1,2}$ columns of height $1$ on its right side.
		For example, let
		\begin{equation*}
			\sigma:=
			\begin{pmatrix}
				0 & 2 \\
				1& 0
			\end{pmatrix}
		\end{equation*}
		and take $\ell=5$, the resulted pyramid $\pi$ is
		\begin{equation*}
			\begin{array}{c}
				\begin{picture}(65,26)
					\put(0,0){\line(1,0){65}}
					\put(0,13){\line(1,0){65}} 
					\put(13,26){\line(1,0){26}}
					\put(0,0){\line(0,1){13}} 
					\put(13,0){\line(0,1){26}}
					\put(26,0){\line(0,1){26}} 
					\put(39,0){\line(0,1){26}}
					\put(52,0){\line(0,1){13}}
					\put(65,0){\line(0,1){13}}
				\end{picture}
			\end{array}.
		\end{equation*}
		Conversely, given a two-rowed pyramid $\pi$, we are able to recover the shift matrix $\sigma$ and $\ell$ (cf. \cite[Proposition 2.8]{Peng21}).
		
		A {\it $\pi$-tableau} is a diagram obtained by filling the boxes of $\pi$ with elements of $\kk$. 
		The set of all tableaux of shape $\pi$ is denoted $\Tab_{\kk}(\pi)$.
		We represent the $\pi$-tableau with the entries $a_1,\dots,a_k$ along its first row and $b_1,\dots,b_{\ell}$ along its second row simply by the array
		$\begin{tiny}
			\begin{array}{ccc}
				a_1\cdots a_k\\
				b_1\cdots b_{\ell}
			\end{array}
		\end{tiny}$.
		We say that $A,B\in \Tab_{\kk}(\pi)$ are {\it row equivalent}, denoted $A\sim B$, if $B$ can be obtained from $A$ by permuting the entries in the rows.
		
		We will write $\Y_\pi$ (resp. $\Y_\pi^{[p]}$) instead of $\Y_{\sigma,\ell}$ (resp. $\Y_{\sigma,\ell}^{[p]}$) to ease notation.
		By the relations, $\Y_\pi$ admits a $\ZZ$-grading (cf. \cite[Section 6]{BBG13}\footnote{The authors mainly considered the principal $W$-algebra $W_\pi$ for $\fgl_{m|n}$ in {\it loc. cit.}
			They identify $\Y_{\sigma,\ell}$ with $W_\pi$ via the isomorphism in \cite[Theorem 4.5]{BBG13}.})
		\begin{align}\label{z-grading}
			\Y_\pi=\bigoplus\limits_{g\in\ZZ}\Y_{\pi;g}
		\end{align}
		such that the generators $d_i^{(r)}$ are of degree $0$,
		the generators $e^{(r)}$ are of degree $1$ and the generators $f^{(r)}$ are of degree $-1$.
		Moreover, the algebra $\Y_\pi$ admits a triangular decomposition. 
		We first introduce some subsets of $\Omega$ \eqref{Omega definition}.
		Put
		\[
		\Omega_0:=\{d_1^{(r)}, d_2^{(s)} ;~0<r\leq k, 0<s\leq \ell\}
		\]
		and 
		\[
		\Omega_+:=\{e^{(r)};~s_{1,2}<r\leq s_{1,2}+k\} ,\quad  \Omega_-:=\{f^{(r)};~s_{2,1}<r\leq s_{2,1}+k\},
		\]
		and let $\Y_\pi^0$, $\Y_{\pi}^{-}$ and $\Y_\pi^\sharp$ be the subalgebras of $\Y_\pi$ generated by $\Omega_0$, $\Omega_-$ and $\Omega_0\cup\Omega_+$, respectively.
		
		For $A=\!\!\begin{tiny}
			\begin{array}{ccc}
				a_1\cdots a_k\\
				b_1\cdots b_{\ell}
			\end{array}
		\end{tiny}\!\!\in \Tab_{\kk}(\pi)$, let $\kk_A$ be the one-dimensional $\Y_\pi^0$-module on basis $1_A$ such that
		\begin{align}\label{ukd1u}
			u^kd_1(u).1_A=(u+a_1)\cdots(u+a_k)1_A,
		\end{align}
		\begin{align}\label{uld2u}
			u^{\ell}d_2(u).1_A=(u+b_1)\cdots(u+b_{\ell})1_A.
		\end{align}
		Thus, $d_1^{(r)}.1_A=e_r(a_1,\dots,a_k)1_A$ and $d_2^{(r)}.1_A=e_r(b_1,\dots,b_l)1_A$,
		where $e_r$ denotes the $r$-th elementary symmetric polynomial.
		Since $\Y_\pi^0$ is the polynomial algebra on $\Omega_0$,
		every irreducible $\Y_\pi^0$ is isomorphic to $\kk_A$ for some $A\in\Tab_{\kk}(\pi)$, and $\kk_A\cong\kk_B$ if and only if $A\sim B$.
		
		Thanks to \cite[Theorem 6.1]{BBG13}, there is a unique surjective homormorphism 
		$\Y_\pi^\sharp\twoheadrightarrow \Y_\pi^0$ sending $e^{(r)}\mapsto 0$ for all $r>s_{1,2}$. We can view $\kk_A$ as a $\Y_\pi^\sharp$-module via pullback along the surjection. We define the {\it Verma module}
		\begin{align}\label{verma module for y pi}
			M(A):=\Y_\pi\otimes_{\Y_\pi^\sharp}\kk_A.    
		\end{align}
		From the PBW theorem, $\Y_\pi$ is a free right $\Y_\pi^\sharp$-module with basis given by the ordered supermonomials in $\Omega_-$. 
		This yields $\dim M(A)=2^k$.
		
		The following lemma in characteristic zero is given in \cite[Lemma 7.1]{BBG13}.
		In our situation, we are going to employ the similar proof of Proposition \ref{prop: zpl unique max submodule}.
		\begin{Lemma}\label{lemma:MA has unique max submodule}
			For $A=\!\!\begin{tiny}
				\begin{array}{ccc}
					a_1\cdots a_k\\
					b_1\cdots b_{\ell}
				\end{array}
			\end{tiny}\!\!\in \Tab_{\kk}(\pi)$, 
			the Verma module $M(A)$ has a unique irreducible quotient $L(A)$.
			The image $v_+$ of $1\otimes 1_A$ is the unique  (up to scalars) nonzero vector in $L(A)$ such that $e^{(r)}.v_+ = 0 $ for all $r>s_{1,2}$.
			Moreover, we have that $ d_1^{(r)}.v_+ = e_r(a_1, \dots, a_k)v_+$ and $ d_2^{(s)}.v_+ = e_r(b_1,\dots, b_{\ell})v_+ $ for all $r\geq 0$.
		\end{Lemma}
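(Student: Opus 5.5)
The plan is to copy the proof of Proposition \ref{prop: zpl unique max submodule}, using the $\ZZ$-grading \eqref{z-grading} in place of the bare PBW description. By the PBW theorem for $\Y_\pi$, the Verma module $M(A)$ has basis $\{f^I\otimes 1_A\}$. Here $f^I$ runs over the ordered supermonomials in $\Omega_-$, and each exponent is $0$ or $1$, since $(f^{(r)})^2=0$ by \eqref{bracket eres frfs}. The grading on $\Y_\pi$ induces a grading $M(A)=\bigoplus_{g\leq 0}M(A)_g$, where we declare $1\otimes 1_A$ to have degree $0$. The degree-$0$ piece is $\kk(1\otimes 1_A)$, because every nonempty monomial in $\Omega_-$ has negative degree.

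\emph{Unique maximal submodule.} Let $N$ be a proper submodule. I claim that $N\subseteq \bigoplus_{g<0}M(A)_g=(\Y_\pi^-)_+.(1\otimes 1_A)$. Suppose instead that $N$ contains $y=(c-x).(1\otimes 1_A)$ with $c\neq 0$ and $x\in(\Y_\pi^-)_+$. The subalgebra $\Y_\pi^-$ is generated by finitely many odd elements that pairwise anticommute, and each squares to zero. Hence $(\Y_\pi^-)_+$ is nilpotent, and in particular $x$ is nilpotent. Rescaling $y$ to $c=1$ and applying $1+x+\cdots+x^{n-1}$ gives $1\otimes 1_A\in N$, so $N=M(A)$, a contradiction. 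Therefore the sum of all proper submodules is still proper. It is the unique maximal submodule, and $L(A)$ is the resulting irreducible quotient. The image $v_+$ is nonzero.

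\emph{The eigenvalues.} The formulas for $d_i^{(r)}.v_+$ follow directly from the definition \eqref{ukd1u}, \eqref{uld2u} of $\kk_A$. We also have $e^{(r)}.v_+=0$ for $r>s_{1,2}$, because $\Y_\pi^\sharp$ acts on $\kk_A$ through the surjection killing all $e^{(r)}$.

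\emph{Uniqueness of $v_+$.} This is the main step. Let $v\in L(A)$ be nonzero with $e^{(r)}.v=0$ for all $r>s_{1,2}$. The maximal submodule is graded, since it is the sum of all proper submodules and each of these lies in negative degrees. So $L(A)$ inherits the grading, with $L(A)_0=\kk v_+$. Each $e^{(r)}$ is homogeneous of degree $1$, so each homogeneous component of $v$ is again annihilated by all $e^{(r)}$. It therefore suffices to treat a homogeneous $v$ of degree $g$. Suppose $g<0$. Since $\Y_\pi^0$ acts diagonalizably on each graded piece and preserves the annihilator of the $e$'s, the cyclic module $\Y_\pi.v$ equals $\Y_\pi^-\Y_\pi^0\Y_\pi^+.v$ by the PBW theorem in the order $f,d,e$. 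This reduces to $\Y_\pi^-\Y_\pi^0.v$, which lies in degrees $\leq g<0$. Then $\Y_\pi.v$ is a nonzero submodule missing $v_+$, contradicting irreducibility. Hence $g=0$, and $v$ is proportional to $v_+$.

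The one point to check is that the PBW basis can be taken in the ordering $f$, then $d$, then $e$ within $\Y_\pi$; the PBW statement for $\Omega$ allows any fixed ordering, so this holds.
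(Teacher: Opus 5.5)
Your proof is correct and is essentially the paper's. The unique maximal submodule comes from nilpotency of $(\Y_\pi^-)_+$ exactly as in Proposition~\ref{prop: zpl unique max submodule}, and uniqueness of $v_+$ comes from the PBW ordering $\Omega_-<\Omega_0<\Omega_+$ together with the $\ZZ$-grading. The paper differs only in splitting $v=a_0v_++x.v_+$ and showing $\Y_\pi.(x.v_+)\subseteq(\Y_\pi^-)_+.v_+$, which avoids your reduction to homogeneous $v$ and hence the need for $L(A)$ to be graded.

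Two small repairs are needed. First, delete the clause that $\Y_\pi^0$ acts diagonalizably. It is false in general: for $\sigma=0$, $\ell=2$ and $a_1=a_2\notin\{b_1,b_2\}$ one has $L(A)=M(A)$, and $d_1^{(2)}$ acts by a Jordan block on the degree $-1$ piece. You never use it anyway, since $\Y_\pi.v=\Y_\pi^-\Y_\pi^0.v$ follows from PBW and $e^{(r)}.v=0$ alone. Second, justify that the maximal submodule is graded by noting that the span of the homogeneous components of the elements of a submodule is again a submodule, and it is still proper because it lies in negative degrees.
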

		\begin{proof}
			We know that
			\[
			X:=\{(f^{(s_{2,1}+1)})^{i_1}(f^{(s_{2,1}+2)})^{i_2}\cdots(f^{(s_{2,1}+k)})^{i_k};~i_j\in \{0,1\}, j=1,\dots, k\}.
			\]
			is a basis of $\Y_{\pi}^{-}$ and $\Y_{\pi}^{-}\otimes 1_A\cong M(A)$.
			Write 
			\[f_{i_1,\dots,i_k}:=(f^{(s_{2,1}+1)})^{i_1}(f^{(s_{2,1}+2)})^{i_2}\cdots(f^{(s_{2,1}+k)})^{i_k}
			\]
			for short and define
			\[
			(\Y_{\pi}^{-})_+:=\Span_\kk\{f_{i_1,\dots,i_k}\in X;~(i_1,\dots,i_k)\neq (0,\dots,0)\}.
			\]
			It suffices to prove that any proper submodule $M$ of $M(A)$ is contained in $(\Y_{\pi}^{-})_+\otimes 1_A$.
			For any $x\in (\Y_{\pi}^{-})_+$, we observe that $x$ is nilpotent.
			Then we may apply precisely the same argument as for Proposition \ref{prop: zpl unique max submodule} to get that $M(A)$ has a unique simple quotient $L(A)$.
			
			The foregoing observations in conjunction with \eqref{ukd1u} and \eqref{uld2u} imply that $v_+$ is a nonzero vector which satisfies our requirements.
			It remains to show that any vector $v\in L(A)$ annihilated by all $e^{(r)}$ is  proportional to $v_+$.
			Using the $\ZZ$-grading of $\Y_\pi$ \eqref{z-grading},
			we know that
			\[
			L(A)=\Y_\pi.v_+=\sum\limits_{g\in\ZZ}\Y_{\pi;g}.v_+.
			\]
			We fix the order of generators of $\Y_\pi$ such that $\Omega_{-}<\Omega_0<\Omega_+$.
			Since $e^{(r)}.v_+=0$,
			the PBW theorem implies that $\Y_{\pi;0}.v_+=\kk v_+$ and $ \Y_{\pi;g}.v_+=(0)$ for $g>0$.
			It follows that 
			\[
			L(A)=\kk v_+ +\sum\limits_{g<0}\Y_{\pi;g}.v_+=\kk v_+ +(\Y_{\pi}^{-})_+.v_+.
			\]
			Recall that $(\Y_{\pi}^{-})_+$ consists of nilpotent elements,
			hence, $L(A)=\kk v_+ \oplus (\Y_{\pi}^{-})_+.v_+$.
			Write $v=a_0v_+ + x.v_+$ with $a_0\in\kk$ and $x\in (\Y_{\pi}^{-})_+$.
			We need to show that $x.v_+=0$.
			Next, we observe that 
			\[
			0=e^{(r)}.v=e^{(r)}.(a_0v_+ + x.v_+)=e^{(r)}x.v_+   
			\] and $x.v_+\in\sum_{g<0}\Y_{\pi;g}.v_+$.
			This means the submodule $\Y_\pi.(x.v_+)\subseteq\sum_{g<0}\Y_{\pi;g}.v_+=(\Y_{\pi}^{-})_+.v_+$,
			so it must be $0$.
		\end{proof}
		
		\begin{Theorem}\label{thm:classification of simple ypimodule}
			Every simple $\Y_\pi$-module is finite-dimensional and is isomorphic to one of the modules $L(A)$ from Lemma \ref{lemma:MA has unique max submodule} for some $A\in\Tab_{\kk}(\pi)$. 
			Moreover, $L(A)\cong L(B)$ if and only if $ A \sim B$.
			Hence, fixing a set $\Tab_{\kk}(\pi)/\sim $ of representatives for the $ \sim$-equivalence classes in $\Tab_{\kk}(\pi)$, the modules
			\[
			\{L(A);~A \in \Tab_{\kk}(\pi)/\sim \}
			\]
			give a complete set of pairwise inequivalent simple $\Y_\pi$-modules.
		\end{Theorem}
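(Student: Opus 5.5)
Three things need to be shown: every simple $\Y_\pi$-module is finite dimensional, every simple module is some $L(A)$, and $L(A)\cong L(B)$ exactly when $A\sim B$.

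\emph{Finite dimensionality.} I would start from the PBW basis of $\Y_\pi$ in the elements of $\Omega$. Let $V$ be a simple $\Y_\pi$-module and take any nonzero $v\in V$. The elements of $\Omega_+\cup\Omega_-$ are odd and square to zero by \eqref{bracket eres frfs}. Hence the ordered supermonomials in $\Omega_-$ and in $\Omega_+$ span finite dimensional spaces, of dimension at most $2^k$ each. So $V$ is a quotient of $\Y_\pi\otimes_{\Y_\pi^\sharp}\Y^\sharp_\pi v$ and is spanned by $\Y_\pi^-\Y^0_\pi\Y^+_\pi v$. This does not yet give finite dimensionality, because $\Y^0_\pi$ is a polynomial algebra.

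\emph{Highest weight vector.} I would use the $\ZZ$-grading \eqref{z-grading} instead. The subspace $\Y_\pi^+ v$ is finite dimensional, and $\Y^+_\pi$ is spanned by monomials in at most $k$ odd elements. Choose $w$ in $\Y^+_\pi v$ nonzero with $e^{(r)}.w=0$ for all $r>s_{1,2}$: take a nonzero $e$-monomial applied to $v$ of maximal length. This is the analogue of Lemma \ref{Lemma: L0 nonzero}, but easier since $\Omega_+$ is finite. Then I would show that $W:=\{w\in V;\ e^{(r)}.w=0 \text{ for all } r>s_{1,2}\}$ is $\Y^0_\pi$-stable. The relation \eqref{bracket dir esfs} gives $[d_i^{(r)},e^{(s)}]$ as a combination of $d_i^{(t)}e^{(r+s-1-t)}$ with $r+s-1-t\ge s>s_{1,2}$, so these brackets kill $W$.

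\emph{Reducing $W$ and identifying $V$.} Next I would show $W$ is finite dimensional, which is the main obstacle. The plan is to use simplicity: $V=\Y_\pi.w_0$ for any nonzero $w_0\in W$. The PBW theorem with ordering $\Omega_-<\Omega_0<\Omega_+$ gives $V=\Y^-_\pi\Y^0_\pi.w_0$. Taking the degree-$0$ part relative to $w_0$, the span $\Y^0_\pi.w_0$ is a $\Y^0_\pi$-module on which the commutative algebra acts. A simple module over the countable-dimensional algebra $\Y_\pi$ over an algebraically closed field has $\End=\kk$ by Dixmier's lemma. The central-type argument here is that the degree-$0$ part $V_0=\Y_{\pi;0}w_0$ is a simple $\Y_{\pi;0}$-module, and $\Y_{\pi;0}$ acts on $W\cap V_0$ through $\Y^0_\pi$. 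So $W\cap V_0$ is a simple module over a commutative countably generated algebra, hence one dimensional, spanned by a common eigenvector $\zeta$. I would then write the eigenvalues of $u^kd_1(u)$ and $u^\ell d_2(u)$ on $\zeta$ as monic polynomials. These factor over the algebraically closed field $\kk$, giving a tableau $A$ with \eqref{ukd1u} and \eqref{uld2u}. This yields a surjection $M(A)\twoheadrightarrow V$, so $V\cong L(A)$ by Lemma \ref{lemma:MA has unique max submodule}, and $\dim V\le 2^k$.

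\emph{Isomorphism classes.} If $A\sim B$, then $\kk_A\cong\kk_B$, hence $M(A)\cong M(B)$ and $L(A)\cong L(B)$. Conversely, suppose $L(A)\cong L(B)$. By the uniqueness statement in Lemma \ref{lemma:MA has unique max submodule}, the highest weight vectors correspond up to scalars. So the eigenvalues of $d_1(u)$ and $d_2(u)$ agree, the multisets of roots of the polynomials coincide, and therefore $A\sim B$.
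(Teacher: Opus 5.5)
Your overall route is the right one, and it is essentially what the paper does by deferring to [BBG13, Theorem 7.2]. You produce a nonzero vector killed by $\Omega_+$ (your maximal-length $e$-monomial argument is fine) and note that $W$ is $\Y^0_\pi$-stable. You then find a common eigenvector, map $M(A)$ onto $V$, and read off the isomorphism classes from Lemma \ref{lemma:MA has unique max submodule}, exactly as the paper does.

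The gap is at the step you yourself call the main obstacle: producing a common $\Y^0_\pi$-eigenvector. First, the tools you cite fail in this setting.
\begin{itemize}
\item Dixmier's lemma needs $\kk$ uncountable, but the paper allows, e.g., $\kk=\overline{\FF}_p$.
\item A simple module over a countably generated commutative algebra need not be one-dimensional over a countable field: $\kk(t)$ over $\kk=\overline{\FF}_p$ is a counterexample.
\end{itemize}
What saves you is that $\Y^0_\pi$ is finitely generated (a polynomial algebra on the finite set $\Omega_0$), so the Nullstellensatz applies. Second, and more seriously, you only assert that $V_0=\Y_{\pi;0}w_0=\Y^0_\pi w_0$ is a simple $\Y_{\pi;0}$-module. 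The usual lemma (``the degree-zero part of a simple graded module is simple over the degree-zero subalgebra'') needs $V$ to be graded. But $V$ is an arbitrary simple module, and nothing tells you a priori that $\sum_{g\le 0}\Y_{\pi;g}w_0$ is a direct sum. What you need is exactly $V_0\cap N=0$, where $N:=\sum_{I\neq\emptyset}f^IV_0$ and $f^I$ runs over the nonempty ordered monomials in $\Omega_-$.

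This can be filled with the nilpotency trick of Proposition \ref{prop: zpl unique max submodule}.
\begin{itemize}
\item Since $\Omega_+$ kills $V_0\subseteq W$, use the PBW basis in the order $\Omega_-<\Omega_0<\Omega_+$ together with the $\ZZ$-grading. For $r\in\Y^0_\pi$, the element $rf^I$ has degree $-|I|$, and its surviving PBW terms on $V_0$ are $f^{I'}d^{J'}$ with $|I'|=|I|$. Hence $\Y^0_\pi N\subseteq N$, and clearly $\Y^-_\pi N\subseteq N$.
\item Suppose $0\neq y\in V_0\cap N$. Then $V=\Y_\pi y=\Y^-_\pi\Y^0_\pi y\subseteq N$, so $V_0\subseteq N$. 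Because the elements of $\Omega_-$ anticommute and square to zero, the spaces $N_j:=\sum_{|I|\ge j}f^IV_0$ then satisfy $N_j\subseteq N_{j+1}$. This forces $V=N_{k+1}=0$, a contradiction, so $V_0\cap N=0$.
\item Now take $0\neq x\in V_0$, write $w_0=ax$, and expand $a$ in the PBW basis. This gives $w_0=rx+n$ with $r\in\Y^0_\pi$ and $n\in N$. Then $n=w_0-rx\in V_0\cap N=0$, so $w_0\in\Y^0_\pi x$.
\end{itemize}
Thus $V_0$ is a simple module over the finitely generated commutative algebra $\Y^0_\pi$, hence one-dimensional by the Nullstellensatz, and $w_0$ itself is the desired common eigenvector. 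With this inserted and the Dixmier remark dropped, the rest of your argument goes through, including $\dim V\le 2^k$ and the criterion $L(A)\cong L(B)\iff A\sim B$.
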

		\begin{proof}
			Given $A,B\in \Tab_{\kk}(\pi)$, 
			we apply Lemma \ref{lemma:MA has unique max submodule} to see that $L(A)\cong L(B)$ if and only if $A\sim B$.
			The rest of the proof relies on the $\ZZ$-grading of $\Y_\pi$ \eqref{z-grading} which works just as well in our situation.
			Then applying verbatim the proof of \cite[Theorem 7.2]{BBG13}
			gives the assertions.
		\end{proof}
		
		Recall that the pyramid $\pi$ has $\ell$ boxes on its second row.
		For each $c=1,\dots,\ell$, let $q_c$ be the height of the $c$-th column in the pyramid $\pi$.
		Given $A\in\Tab_{\kk}(\pi)$, let $A_c$ be its $c$-th column.
		If $q_c=1$, then $A_c$ has just a single entry $b$.
		We let $L(A_c)$ be the one-dimensional $U(\fgl_1)$-module with an even basis vector $v_+$ such that $e_{1,1}.v_+=-bv_+$.
		If $q_c=2$, then $A_c$ has two entries, $a$ in the first row and $b$ in the second row.
		In this case,
		let $L(A_c)$ be the irreducible $U(\fgl_{1|1})$-module with an even basis vector $v_+$ such that $e_{1,1}.v_+=av_+, e_{2,2}.v_+=-bv_+$ and $e_{1,2}.v_+=0$.
		Then $L(A_c)$ is one- or two-dimensional according to whether $a=b$ (see also \eqref{e12e21xi}).
		
		We consider the tensor product 
		\[
		L(A_1)\otimes\cdots\otimes L(A_\ell)
		\]
		which can be naturally thought of as a $U_{\sigma,\ell}$-module.
		Since $\Y_\sigma\stackrel{\ev_{\sigma,\ell}}{\twoheadrightarrow}\Y_\pi\subseteq U_{\sigma,\ell}$, 
		we may restrict it to a $\Y_\pi$-module.
		Then we obtain a well-defined $\Y_\pi$-module
		\begin{align}\label{definition V(A)}  
			V(A):=L(A_1)\otimes\cdots\otimes L(A_\ell).
		\end{align}
		Note that $\dim V(A)=2^{k-h}$,
		where $h$ is the number of $c=1,\dots,\ell$ such that $A_c$ has two equal entries.
		
		We write $v:=v_+\otimes\cdots\otimes v_+\in V(A)$.
		The following lemma shows that $v$ is a highest weight vector in $V(A)$ with the same weight as $1_A$ (see \eqref{ukd1u} and \eqref{uld2u}).
		This is the modular version of \cite[Lemma 8.3]{BBG13}.
		Since we don't use the $W$-algebra,
		we shall prove it by a direct induction.
		\begin{Lemma}\label{lemma: ma to va}
			For any $A\in\Tab_{\kk}(\pi)$,
			there is a nonzero homormorphism
			\[
			M(A)\rightarrow V(A)
			\]
			sending the cyclic vector $1\otimes 1_A\in M(A)$ to $v\in V(A)$.
			In particular,
			$V(A)$ contains a subquotient isomorphic to $L(A)$.
		\end{Lemma}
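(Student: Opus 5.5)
By the universal property of $M(A)=\Y_\pi\otimes_{\Y_\pi^\sharp}\kk_A$, the lemma reduces to a statement about $v$. It suffices to show that $e^{(r)}.v=0$ for all $r>s_{1,2}$, and that $d_1(u)$ and $d_2(u)$ act on $v$ by the scalars in \eqref{ukd1u} and \eqref{uld2u}. Then $\kk v\cong\kk_A$ as $\Y_\pi^\sharp$-modules: $\Y_\pi^\sharp$ acts on $\kk v$ through the surjection $\Y_\pi^\sharp\twoheadrightarrow\Y_\pi^0$ of \cite[Theorem 6.1]{BBG13}, because the two-sided ideal generated by the $e^{(r)}$ kills $v$ once the $e^{(r)}$ do and $\Omega_0$ preserves $\kk v$. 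Frobenius reciprocity then gives a homomorphism $M(A)\to V(A)$ with $1\otimes 1_A\mapsto v\neq 0$. Its image is a cyclic highest weight module, hence a quotient of $M(A)$, so it has $L(A)$ as its head by Lemma~\ref{lemma:MA has unique max submodule}. This gives the subquotient.

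To compute the action on $v$ I will induct on $s_{1,2}+s_{2,1}+k$, working at the level of $\Y_\sigma$, since $V(A)$ is pulled back along $\ev_{\sigma,\ell}$. The base case $s_{1,2}=s_{2,1}=0$ with $k=\ell$ is a tensor product of evaluation modules of $\fgl_{1|1}$. There I will use \eqref{comul and antipode} and \eqref{evaluation map}, as in the proof of Proposition~\ref{Prop:tensor product module irre}. First, $t_{1,2}(u).v=0$ and $t_{i,i}(u).v$ is the product of the factor eigenvalues, namely $\prod(1+a_cu^{-1})$ and $\prod(1+b_cu^{-1})$; note $e_{2,2}$ acts by $-b$ and the sign $(-1)^{|2|}$ compensates. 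Then \eqref{t11t22 drinfeld} and \eqref{t12 t21 drinfeld} give $d_1(u).v=t_{1,1}(u).v$, and $e(u).v=d_1(u)^{-1}t_{1,2}(u).v=0$. Finally $d_2(u).v=t_{2,2}(u).v-f(u)d_1(u)e(u).v=t_{2,2}(u).v$, which matches \eqref{ukd1u} and \eqref{uld2u}.

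For the inductive step, suppose $s_{1,2}>0$. Then $\Delta_{\sigma,\ell}=(\Delta_{\sigma_+,\ell-1}\otimes\id)\circ\Delta_+$, up to the order in which the iterations are composed; I will take this from \cite[Lemma 2.1]{BBG13}, whose coassociativity-type compatibility lets me peel off the rightmost column. Write $A'$ for $A$ with its last column (a single entry $b_\ell$) removed, and $v=v'\otimes v_+$. By induction, $e^{(r)}.v'=0$ for $r>s_{1,2}-1$, $d_1(u).v'$ is given by the first row, and $u^{\ell-1}d_2(u).v'=\prod_{c<\ell}(u+b_c)v'$. Applying \eqref{delta +} with $e_{1,1}.v_+=-b_\ell v_+$:
\begin{itemize}
\item $e^{(r)}.v=(e^{(r)}.v'+b_\ell e^{(r-1)}.v')\otimes v_+=0$ for $r>s_{1,2}$, since then $r-1>s_{1,2}-1$;
\item $d_1$ is unchanged;
\item in generating series, $d_2(u)\mapsto d_2(u)\otimes1-u^{-1}d_2(u)\otimes e_{1,1}$, so $d_2(u).v=(1+b_\ell u^{-1})d_2(u).v'\otimes v_+$, which yields the factor $(u+b_\ell)$ required by \eqref{uld2u}.
\end{itemize}
The case $s_{2,1}>0$ is symmetric, using \eqref{delta -} and peeling off the leftmost column. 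Here $e$ and $d_1$ are unchanged, and $d_2$ picks up the factor $(1+b_1u^{-1})$ in the same way.

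I expect the main obstacle to be bookkeeping rather than computation: making sure that the iterated map $\Delta_{\sigma,\ell}$ really factors as $\Delta_+$ (or $\Delta_-$) followed by the level $\ell-1$ map, compatibly with the column order of $\pi$. Over $\kk$ this needs \cite[Lemma 2.1]{BBG13} to hold verbatim. Its proof is a formal check of relations in Theorem~\ref{thm: Drinfeld presentation}, which is characteristic free, so I will cite it with that remark. A secondary point is that $v\neq0$, so the homomorphism is nonzero; this holds because each factor vector $v_+$ is nonzero.
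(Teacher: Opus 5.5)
Your proposal is correct and follows essentially the same route as the paper. You reduce, via the universal property of $M(A)$, to computing how $e^{(r)}$, $d_1(u)$ and $d_2(u)$ act on $v$; you verify this directly for $\sigma=(0)$ through the RTT generators together with \eqref{t11t22 drinfeld}--\eqref{t12 t21 drinfeld}; and you then induct on $\ell$, peeling off height-one columns with $\Delta_+$ and $\Delta_-$. There are only cosmetic differences. In the base case you apply the coproduct recursively, as in Proposition~\ref{Prop:tensor product module irre}, where the paper quotes the expanded formula \cite[(3-4)]{BBG13}. You also spell out the identification $\kk v\cong\kk_A$ as $\Y_\pi^\sharp$-modules and the subquotient claim, which the paper leaves implicit.
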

		\begin{proof}
			Suppose that $A=\!\!\begin{tiny}
				\begin{array}{ccc}
					a_1\cdots a_k\\
					b_1\cdots b_{\ell}
				\end{array}
			\end{tiny}\!\!\in \Tab_{\kk}(\pi)$.
			By the universal property of $M(A)$,
			it suffices to show that $v$ is annihilated by all $e^{(r)}$ for $r>s_{1,2}$ and that $d_1^{(r)}.v=e_r(a_1,\dots,a_k)v$ and $d_2^{(r)}.v=e_r(b_1,\dots,b_{\ell})v$ for all $r>0$.
			
			First we treat the case where $\sigma=(0)$.
			In order to ease notation,
			we put in the following
			\[
			\bar{e}_{i,j}^{[c]}:=(-1)^{|i|}1^{\otimes (c-1)}\otimes e_{i,j}\otimes 1^{\otimes(\ell-c)}  
			\]
			for any $1\leq i,j\leq 2$ and $1\leq c\leq \ell$.
			In view of \eqref{t11t22 drinfeld} and \eqref{t12 t21 drinfeld},
			it is equivalent to show that
			$t_{1,2}^{(r)}.v=0$ and 
			\[
			t_{1,1}^{(r)}.v=e_r(a_1,\dots,a_k)v,~t_{2,2}^{(r)}.v=e_r(b_1,\dots,b_\ell)v
			\]
			for all $r>0$.
			According to \cite[(3-4)]{BBG13} (see also \cite[Section 2]{Gow07}), 
			we have that 
			\[
			t_{i,j}^{(r)}.v=\sum\limits_{1\leq c_1<\cdots<c_r\leq \ell}\sum\limits_{1\leq h_1,\dots,h_{r-1}\leq 2}\bar{e}_{i,h_1}^{[c_1]}\bar{e}_{h_1,h_2}^{[c_2]}\cdots \bar{e}_{h_{r-1},j}^{[c_r]}.v.
			\]
			If $(i,j)=(1,2)$, then every summand in the above sum is zero because it contains a factor of the form $\bar{e}_{1,2}^{[c_m]}.v_+$ with $1\leq m\leq \ell$, which is zero.
			Similarly, if $i=j$,
			then the only nonzero summand corresponds to the case where each index $h_m$ equals $i$.
			Thus, the summation reduces just to 
			\[
			t_{i,i}^{(r)}.v=\sum\limits_{1\leq c_1<\cdots<c_r\leq \ell}\bar{e}_{i,i}^{[c_1]}\bar{e}_{i,i}^{[c_2]}\cdots \bar{e}_{i,i}^{[c_r]}.v,
			\]
			so that our assertions follow from an easy straightforward computation.
			
			Assume that $\sigma\neq (0)$.
			We proceed by induction on $\ell$.
			The case $\ell=1$ is clear. 
			If $s_{1,2}>0$,
			then $\Delta_+$ \eqref{delta +} induces a map $\Y_{\sigma,\ell}\rightarrow \Y_{\sigma_+,\ell-1}\otimes\,U(\fgl_1)$ (cf. \cite[Remark 8.2]{BBG13}).
			We write $v':=v_+\otimes \cdots \otimes v_+$ for the tensor product of the first $\ell-1$ maximal vectors. Then $v=v'\otimes v_+$.
			We conclude from \eqref{delta +} and the induction hypothesis
			\begin{align*}
				d_1^{(r)}.v=&d_1^{(r)}.v'\otimes v_+=e_r(a_1,\dots,a_k)v'\otimes v_+=e_r(a_1,\dots,a_k)v\\
				d_2^{(r)}.v=&d_2^{(r)}.v'\otimes v_+-d_2^{(r-1)}.v'\otimes e_{1,1}.v_+\\ 
				=&e_r(b_1,\dots,b_{\ell-1})v-e_{r-1}(b_1,\dots,b_{\ell-1})\cdot (-b_{\ell})v\\
				=&e_r(b_1\dots,b_\ell)v
			\end{align*}
			and 
			\[
			e^{(r)}.v=e^{(r)}.v'\otimes v_+-e^{(r-1)}.v'\otimes e_{1,1}.v_+=0.      
			\]
			One argues similarly for the case $s_{2,1}>0$ using \eqref{delta -} instead of \eqref{delta +}.
			Then we can reduce the proof to the case $\sigma=(0)$ by using the maps $\Delta_+$ and $\Delta_{-}$.
		\end{proof}
		
		Now we state the modular version of \cite[Theorem 8.4]{BBG13}.
		\begin{Theorem}\label{main theorem 1}
			Take any $A\in\Tab_{\kk}(\pi)$,
			and let $h\geq 0$ be maximal such that distinct $1\leq i_1,\dots,i_h\leq k$
			and $1\leq j_1,\dots,j_h\leq \ell$ with $a_{i_1}=b_{j_1},\dots,a_{i_h}=b_{j_h}$ exist.
			Choose $B\sim A$ such that $B$ has $h$ columns of height $2$ containing equal entries.
			Then
			\[
			L(A)\cong V(B).
			\]
			In particular, $\dim L(A)=2^{k-h}$.
		\end{Theorem}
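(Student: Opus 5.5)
Since $B\sim A$, Theorem \ref{thm:classification of simple ypimodule} gives $L(A)\cong L(B)$. The statement therefore reduces to showing that $V(B)$ is irreducible. Once that is known, Lemma \ref{lemma: ma to va} gives a nonzero map $M(B)\to V(B)$ whose image is a nonzero submodule, hence all of $V(B)$. So $V(B)$ is an irreducible quotient of $M(B)$, and Lemma \ref{lemma:MA has unique max submodule} yields $V(B)\cong L(B)\cong L(A)$. The dimension formula $\dim L(A)=2^{k-h}$ then follows from $\dim V(B)=2^{k-h}$, noted after \eqref{definition V(A)}.

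To prove irreducibility I will follow the two-step strategy of Proposition \ref{Prop:tensor product module irre}.

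\emph{Step 1.} Every vector $\zeta\in V(B)$ with $e^{(r)}.\zeta=0$ for all $r>s_{1,2}$ is proportional to $v$. First I strip off the height-one columns using $\Delta_\pm$. On the right, $\Delta_+$ identifies $V(B)$ with $V(B')\otimes L(B_\ell)$, where $L(B_\ell)$ is one-dimensional, and the formula $e^{(r)}\mapsto e^{(r)}\otimes1-e^{(r-1)}\otimes e_{1,1}$ acts on $V(B')\otimes\kk$ as $e^{(r)}+b_\ell e^{(r-1)}$. The equations $(e^{(r)}+b_\ell e^{(r-1)})\zeta'=0$ for all $r>s_{1,2}$ form a triangular system in the $e^{(r)}$ with $r>s_{1,2}-1$. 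Since $e^{(r)}=0$ on $V(B')$ for $r$ large, a downward induction on $r$ gives $e^{(r)}.\zeta'=0$ for all $r>s_{1,2}-1$. The left columns are handled with $\Delta_-$, where $e$ is untouched. This reduces Step 1 to the unshifted case $\sigma=(0)$, with $V(B)$ a tensor product of evaluation modules $L(a,b)$ for $\fgl_{1|1}$.

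In the unshifted case I run the induction of Proposition \ref{Prop:tensor product module irre} over the tensor factors. Factors with $a=b$ are one-dimensional and only rescale $t_{1,1}(u)$ and $t_{2,2}(u)$ by the same polynomial, so they can be split off; the remaining factors are two-dimensional. The induction step again leads to the requirement that $\prod_j(-a_1+b_j)\neq 0$, after translating the sign conventions: here $e_{1,1}v_+=av_+$ and $e_{2,2}v_+=-bv_+$, so the earlier condition $\lambda_1^{(i)}+\lambda_2^{(j)}\neq0$ becomes $a_i\neq b_j$ across distinct two-dimensional columns. This is exactly where the maximality of $h$ is used. If some $a_i$ in a column with unequal entries equalled some $b_j$ in another column with unequal entries, one could re-pair the entries to obtain $h+1$ equal columns. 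The same applies to $b_j$ lying in a height-one column, via the $\Delta_\pm$ reduction.

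\emph{Step 2.} $V(B)$ is generated by $v$. I plan to use the dual module, twisted by $\tau$, together with the annihilator argument exactly as in Proposition \ref{Prop:tensor product module irre}. Dualizing replaces $(a,b)$ by a tableau of the same shape with the same coincidence pattern, so Step 1 applies to the dual as well.

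The main obstacle is the bookkeeping in Step 1. The delicate points are the interplay between the shifts (height-one columns) and the non-commutativity of the order of tensor factors, and in particular checking that the inductive condition really reduces to ``no further coincidence $a_i=b_j$,'' whatever order of columns the chosen $B$ has. If this ordering becomes problematic, I would fall back on a dimension count instead. That would mean showing that $L(A)$ has dimension at least $2^{k-h}$ by exhibiting enough linearly independent monomials $f_{i_1,\dots,i_k}.v$ in $V(B)$, using the nilpotent $(\Y_\pi^-)_+$ structure.
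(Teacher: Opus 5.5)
Your reduction to irreducibility of $V(B)$ is fine, and so is the unshifted case. One-dimensional columns $L(a,a)$ merely twist the action by $\mu_f$ with $f(u)=1+au^{-1}$. Maximality of $h$ then gives exactly the hypothesis of Proposition~\ref{Prop:tensor product module irre} for the remaining two-dimensional columns, and its proof works verbatim over $\Y$ with entries in $\kk$.

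The genuine gap is the $\Delta_+$ reduction in Step~1. The assertion that $e^{(r)}$ acts as $0$ on $V(B')$ for large $r$ is false. Truncation only forces $\sum_{t=0}^{k}d_1^{(t)}e^{(m-t)}=0$ for large $m$; it does not kill the $e^{(r)}$. Already on one evaluation module $\ev(e(u))=(1+e_{1,1}u^{-1})^{-1}e_{1,2}u^{-1}$, so $e^{(r)}$ acts by $(-e_{1,1})^{r-1}e_{1,2}$. This sends $e_{2,1}.\xi$ to $(-a)^{r-1}(a-b)\xi$, which is nonzero for every $r$ once $a\neq 0,b$.

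Worse, the reduction itself fails without hypotheses. Take $k=1$, $s_{1,2}=1$, $s_{2,1}=0$, $\ell=2$, so that $V=L(a_1,b_1)\otimes L(b_2)$. Then for $r\geq 2$ one computes $(e^{(r)}+b_2e^{(r-1)}).e_{2,1}\xi=(-a_1)^{r-2}(b_2-a_1)(a_1-b_1)\xi$. So if $a_1=b_2\neq b_1$, the vector $e_{2,1}\xi\otimes v_+$ is killed by all $e^{(r)}$ with $r>s_{1,2}$, yet it is not $\Y_{\sigma_+}$-singular, since $e^{(1)}.e_{2,1}\xi=(a_1-b_1)\xi\neq 0$.

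Hence the condition $b_\ell\neq a_i$ for the unmatched $a_i$ must be used precisely at this step, and your proposal gives no mechanism for it. Supplying one is real work, not bookkeeping. $d_1(u)$ acts on $V(B')$ with eigenvalues built from $a_i$ and $a_i-1$ on different weight spaces. You would have to show that only coincidences $b_\ell=a_i$, and not for instance $b_\ell=a_i-1$, can produce extra singular vectors.

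Step~2 does not transfer either. For $\sigma\neq 0$ the anti-automorphism $\tau$ does not preserve $\Y_\sigma$: $\tau(d_1^{(2)})=t_{2,2}^{(2)}=d_2^{(2)}+f^{(1)}e^{(1)}$, which is not in $\Y_\sigma$ by the PBW theorem. So $V(B)^*$ is not a $\Y_\pi$-module this way, and ``a tableau of the same shape'' is unsupported. You would need a transposition-type anti-isomorphism $\Y_\sigma\to\Y_{\sigma^t}$, compatible with $\Delta_\pm$ and the truncation, landing on the reflected pyramid. The left height-one columns then enter through $f^{(r)}\mapsto f^{(r)}+bf^{(r-1)}$, with the same difficulty as above.

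Your fallback does not close the gap as formulated. Linearly independent vectors $f_{i_1,\dots,i_k}.v$ in $V(B)$ only show $V(B)=\Y_\pi.v$. They do not bound $\dim L(A)$, since $L(A)$ is the simple head of $\Y_\pi.v$; a lower bound needs a Shapovalov-type rank statement for $L(A)$ itself.

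That is the input the paper imports instead of proving irreducibility of $V(B)$ directly. It combines Lemma~\ref{lemma: ma to va}, so that $L(A)\cong L(B)$ is a subquotient of the $2^{k-h}$-dimensional module $V(B)$, with \cite[Theorem 7.3]{BBG13}, whose proof it notes works in characteristic $p$. It then argues as in \cite[Theorem 8.4]{BBG13}.
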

		\begin{proof}
			The proof, which uses Lemma \ref{lemma: ma to va} and \cite[Theorem 7.3]{BBG13},
			is similar to the proof of \cite[Theorem 8.4]{BBG13}.
			Notice that the proof of \cite[Theorem 7.3]{BBG13} works as well in positive characteristic.
		\end{proof}
		
		Now we turn to the irreducible $\Y_\pi^{[p]}$-modules.
		Recall from Section \ref{section name:Shifted super Yangains} that
		\[
		\Y_\sigma\stackrel{\ev_{\sigma,\ell}}{\twoheadrightarrow}\Y_\pi\twoheadrightarrow \Y_\pi^{[p]}:=\Y_{\sigma,\ell}/\Y_{\sigma,\ell}Z_p(\Y_{\sigma,\ell})_+.
		\]
		By Theorem \ref{thm:classification of simple ypimodule}
		we have a classification of simple $\Y_\pi$-modules given by the
		modules $L(A)$ for $A\in\Tab_{\kk}(\pi)$ ranging over a set of representatives of row equivalence classes of $\pi$-tableaux.
		We shall determine for which of these simple modules the action of $\Y_\pi$ factors through the quotient $\Y_\pi\twoheadrightarrow \Y_\pi^{[p]}$ to give an irreducible $\Y_\pi^{[p]}$-module.
		
		Given $A=\!\!\begin{tiny}
			\begin{array}{ccc}
				a_1\cdots a_k\\
				b_1\cdots b_{\ell}
			\end{array}
		\end{tiny}\!\!\in \Tab_{\kk}(\pi)$,
		we let $L(A)$ be an irreducible $\Y_\pi$-module from Theorem \ref{thm:classification of simple ypimodule}.
		Denote by $v_+$ the unique maximal vector of $L(A)$ (see Lemma \ref{lemma:MA has unique max submodule}).
		We note that the module $L(A)$ factors to a module for $\Y_\pi^{[p]}$ if and only if $b_i^{(r)}.v_+=0$ for $i=1,2$.
		In view of \eqref{def of biu} and \eqref{ukd1u},
		we have 
		\begin{align*}
			b_1(u).v_+=&d_1(u)d_1(u-1)\cdots d_1(u-p+1).v_+\\
			=&\frac{\prod\limits_{i=1}^k \prod\limits_{j=0}^{p-1}(u+a_i-j)}{(\prod\limits_{j=0}^{p-1}(u-j))^k}v_+.
		\end{align*}
		As a result, $b_1^{(r)}.v_+=0$ if and only if $a_i\in \FF_p$ all $1\leq i\leq k$.
		Similarly, one obtains from \eqref{uld2u} that
		$b_2^{(r)}.v_+=0$ if and only if $b_j\in \FF_p$ all $1\leq j\leq \ell$.
		
		We write $\Tab_{\FF_p}(\pi)\subseteq \Tab_{\kk}(\pi)$ for those tableaux with entries in $\FF_p$.
		Taking into account Theorem \ref{thm:classification of simple ypimodule},
		we readily obtain:
		\begin{Theorem}\label{thm:classification of simple restricted ypi-module}
			Every simple $\Y_\pi^{[p]}$-module is finite-dimensional and is isomorphic to one of the modules $L(A)$ from Lemma \ref{lemma:MA has unique max submodule} for some $A\in\Tab_{\FF_p}(\pi)$. 
			Moreover, $L(A)\cong L(B)$ if and only if $ A \sim B$.
			Hence, fixing a set $\Tab_{\FF_p}(\pi)/\sim $ of representatives for the $ \sim$-equivalence classes in $\Tab_{\FF_p}(\pi)$, the modules
			\[
			\{L(A);~A \in \Tab_{\FF_p}(\pi)/\sim \}
			\]
			give a complete set of pairwise inequivalent simple $\Y_\pi^{[p]}$-modules.
		\end{Theorem}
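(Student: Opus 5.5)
The plan is to deduce the statement from Theorem \ref{thm:classification of simple ypimodule} together with the computation preceding the theorem. The restricted algebra $\Y_\pi^{[p]}$ is a quotient of $\Y_\pi$, so every simple $\Y_\pi^{[p]}$-module $L$ is, by pullback along $\Y_\pi\twoheadrightarrow\Y_\pi^{[p]}$, a simple $\Y_\pi$-module on which the ideal $\Y_\pi Z_p(\Y_\pi)_+$ acts by zero. Conversely, a simple $\Y_\pi$-module on which this ideal acts trivially is a simple $\Y_\pi^{[p]}$-module. Isomorphism classes are the same whether computed over $\Y_\pi$ or over $\Y_\pi^{[p]}$. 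Hence the simple $\Y_\pi^{[p]}$-modules are exactly those $L(A)$, $A\in\Tab_{\kk}(\pi)/\sim$, on which $Z_p(\Y_\pi)_+$ acts as zero. Finite dimensionality and the criterion $L(A)\cong L(B)\iff A\sim B$ are then inherited directly from Theorem \ref{thm:classification of simple ypimodule}.

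It remains to show that $Z_p(\Y_\pi)_+$ annihilates $L(A)$ if and only if $A\in\Tab_{\FF_p}(\pi)$ up to row equivalence. Let $b_i^{(r)}$ denote the images of the generators. They are central in $\Y_\pi$, since they are central in $\Y_\sigma$ (Remark \ref{Remark: generators of p-center}) and $\ev_{\sigma,\ell}$ is surjective. By Schur's lemma each $b_i^{(r)}$ therefore acts on the simple finite-dimensional module $L(A)$ by a scalar. Alternatively, because $L(A)=\Y_\pi.v_+$, it suffices to check the action on the maximal vector $v_+$. So $Z_p(\Y_\pi)_+$ kills $L(A)$ precisely when $b_i(u).v_+=v_+$ for $i=1,2$, that is, when $b_i^{(r)}.v_+=0$ for all $r>0$.

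By Lemma \ref{lemma:MA has unique max submodule} and \eqref{ukd1u}, \eqref{uld2u}, $v_+$ is a joint eigenvector for $d_1(u)$ and $d_2(u)$. This gives $b_1(u).v_+=\prod_{i=1}^k\prod_{j=0}^{p-1}\frac{u+a_i-j}{u-j}\,v_+$, and similarly $b_2(u).v_+=\prod_{j=1}^{\ell}\prod_{m=0}^{p-1}\frac{u-m}{u+b_j-m}\,v_+$. Using $\prod_{j\in\FF_p}(u+a-j)=(u+a)^p-(u+a)$, the first product equals $\prod_i\frac{u^p-u+(a_i^p-a_i)}{u^p-u}$. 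This is identically $1$ if and only if $\prod_i(x+c_i)=x^k$ with $x=u^p-u$ and $c_i=a_i^p-a_i$. That holds if and only if all $c_i=0$, i.e.\ all $a_i\in\FF_p$, because the polynomial $\prod_i(x+c_i)$ has roots $-c_i$. The same argument shows $b_2(u).v_+=v_+$ if and only if all $b_j\in\FF_p$. Since membership of $A$ in $\Tab_{\FF_p}(\pi)$ is invariant under row permutations, the condition is well defined on $\sim$-classes. This gives the classification, with $\Tab_{\FF_p}(\pi)/\sim$ as index set.

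The main obstacle is the careful bookkeeping in this last step: that $Z_p(\Y_\pi)_+$ is generated by all coefficients $b_i^{(r)}$ rather than only by the $b_i^{(rp)}$, and that the ideal kills all of $L(A)$ once it kills $v_+$. The first point is handled by Remark \ref{Remark: generators of p-center}, which is how $Z_p(\Y_{\sigma,\ell})_+$ was defined. The second follows from centrality and cyclicity. The polynomial identity step is elementary.
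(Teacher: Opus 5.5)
Your proposal is correct and follows essentially the same route as the paper. It pulls back simple $\Y_\pi^{[p]}$-modules to $\Y_\pi$, invokes the classification of simple $\Y_\pi$-modules, and decides when $b_i(u)$ acts trivially by evaluating it on the maximal vector $v_+$ of $L(A)$ via \eqref{ukd1u} and \eqref{uld2u}. The extra details you supply are exactly what the paper leaves implicit in ``As a result'': centrality plus cyclicity to pass from $v_+$ to all of $L(A)$, and the identity $\prod_{j\in\FF_p}(u+a-j)=(u^p-u)+(a^p-a)$ justifying the $\FF_p$-criterion.
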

		
		\begin{Remark}\label{remark last}
			In \cite{BBG13},
			the authors proved that there is a superalgerba isomorphism $\mu: W_\pi\cong \Y_\pi$,
			where $W_\pi$ is the principal $W$-algebra associated to the Lie superalgebra $\fgl_{M|N}$, see Theorem 4.5 of {\it loc. cit.} for details.
			In joint work with Hu and Topley \cite{CHT25} the first author studied the certain modular representations of the general linear Lie algebra $\fgl_{2n}$
			by employing the restricted version of Brundan–Kleshchev’s isomorphism (see \cite[Theorem 1.1]{GT21}, \cite[Theorem 10.1]{BK06}).
			We expect the isomorphism $\mu$ factor through the restricted quotients,
			i.e., $W_\pi^{[p]}\cong \Y_\pi^{[p]}$.
			Here, $W_\pi^{[p]}$ is the restricted principal $W$-superalgebra (cf. \cite[Theorem 4.4]{WZ09}, \cite[Theorem 2.24]{ZS15}).
			In conjunction with Theorem \ref{thm:classification of simple restricted ypi-module} this will lead to a classification of irreducible modules of the reduced enveloping algebra of $\fgl_{m|n}$ associated to the regular nilpotent $p$-character (cf. \cite{WZ09}).

		\end{Remark}
		\bigskip
		\appendix
		\section{Proof of Proposition \ref{prop: delta send p to p}}\label{section name:appendix}
		In the appendix, 
		we provide a detailed proof for Proposition \ref{prop: delta send p to p}.
		The result is rather parallel to that of the Yangian $Y_2$ (see \cite[Section 2.3]{CHT25}).
		
		\begin{Lemma}\label{lemma A-1}
			The following identities hold in $\Y[[u^{-1}]]$:  
			\begin{itemize}
				\item[(1)] $t_{1,1}(u-1)t_{1,2}(u)=t_{1,2}(u-1)t_{1,1}(u)$;
				\item[(2)] $t_{1,1}(u)t_{2,1}(u-1)=t_{2,1}(u)t_{1,1}(u-1)$;
				\item[(3)] $t_{1,2}(u-1)t_{1,2}(u)=0$;
				\item[(4)] $(k+1)t_{2,1}(u)t_{1,1}(u-k)=kt_{1,1}(u-k)t_{2,1}(u)+t_{2,1}(u-k)t_{1,1}(u)$~\text{for}~$k\in\ZZ_{\geq 0}$.
			\end{itemize}
		\end{Lemma}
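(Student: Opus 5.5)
All four identities should follow by specializing the defining relation \eqref{def relation-series} to particular indices and then substituting $v=u-1$ (or $v=u-k$). This substitution is legitimate: the series $t_{i,j}(u-c)$ for $c\in\ZZ$ are well defined in $\Y[[u^{-1}]]$ by expanding $(u-c)^{-r}$ in powers of $u^{-1}$, and $v\mapsto u-c$ respects products of series in $u^{-1},v^{-1}$. For each identity I first compute the sign $(-1)^{|i||j|+|i||k|+|j||k|}$ and decide whether the bracket is a commutator or an anticommutator, then specialize. After the substitution the factor $(u-v)$ becomes $1$ or $k$.

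For (1), take $(i,j,k,l)=(1,1,1,2)$. The sign is $+$ and $t_{1,1}$ is even, so
\[
(u-v)[t_{1,1}(u),t_{1,2}(v)]=t_{1,1}(u)t_{1,2}(v)-t_{1,1}(v)t_{1,2}(u).
\]
Setting $v=u-1$ and cancelling $t_{1,1}(u)t_{1,2}(u-1)$ on both sides leaves $t_{1,2}(u-1)t_{1,1}(u)=t_{1,1}(u-1)t_{1,2}(u)$, which is (1).

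For (2), take $(i,j,k,l)=(2,1,1,1)$. Again the sign is $+$, so
\[
(u-v)[t_{2,1}(u),t_{1,1}(v)]=t_{1,1}(u)t_{2,1}(v)-t_{1,1}(v)t_{2,1}(u).
\]
Setting $v=u-1$ and cancelling $t_{1,1}(u-1)t_{2,1}(u)$ gives (2).

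For (3), take $(i,j,k,l)=(1,2,1,2)$. The sign is $+$ because $|i|=|k|=0$, and since $t_{1,2}$ is odd the bracket is an anticommutator:
\[
(u-v)\bigl(t_{1,2}(u)t_{1,2}(v)+t_{1,2}(v)t_{1,2}(u)\bigr)=t_{1,2}(u)t_{1,2}(v)-t_{1,2}(v)t_{1,2}(u).
\]
With $v=u-1$ this yields $2\,t_{1,2}(u-1)t_{1,2}(u)=0$. Since $p>2$, we get (3).

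Item (4) is the only one needing an extra idea. Applying the relation from (2) with $u$ and $v$ interchanged multiplies both sides by $-1$, so
\[
[t_{2,1}(u),t_{1,1}(v)]=[t_{2,1}(v),t_{1,1}(u)].
\]
Putting $v=u-k$ in this symmetric identity gives
\[
t_{2,1}(u)t_{1,1}(u-k)-t_{1,1}(u-k)t_{2,1}(u)=t_{2,1}(u-k)t_{1,1}(u)-t_{1,1}(u)t_{2,1}(u-k). \qquad (*)
\]
Separately, putting $v=u-k$ directly in the relation from (2) gives
\[
k\bigl(t_{2,1}(u)t_{1,1}(u-k)-t_{1,1}(u-k)t_{2,1}(u)\bigr)=t_{1,1}(u)t_{2,1}(u-k)-t_{1,1}(u-k)t_{2,1}(u).
\]
Add this to $(*)$. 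The terms $t_{1,1}(u)t_{2,1}(u-k)$ cancel, and so does one copy of $t_{1,1}(u-k)t_{2,1}(u)$. What remains is
\[
(k+1)t_{2,1}(u)t_{1,1}(u-k)-k\,t_{1,1}(u-k)t_{2,1}(u)=t_{2,1}(u-k)t_{1,1}(u),
\]
which is (4). The case $k=0$ is trivially true.

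I do not expect a genuine obstacle. The only points needing care are the sign and super-bracket bookkeeping in (3), where $p>2$ is used, and spotting the symmetry $[t_{2,1}(u),t_{1,1}(v)]=[t_{2,1}(v),t_{1,1}(u)]$ in (4).
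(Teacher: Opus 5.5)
Your proof is correct: the signs, the anticommutator in (3) together with the use of $p>2$, and the bookkeeping in (4) all check out. It differs from the paper's argument in two minor ways.

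For (1)--(3) the paper does no computation. It transports three identities already proved in earlier work through the anti-automorphism $\tau$ of \eqref{tau-antiauto}. Your direct specializations of \eqref{def relation-series} instead make the lemma self-contained.

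For (4) the paper uses the index tuple $(1,1,2,1)$ and substitutes $u=v-k$. That tuple's right-hand side, $t_{2,1}(u)t_{1,1}(v)-t_{2,1}(v)t_{1,1}(u)$, already has the factors in the needed order, and $u-v=-k$, so (4) follows after one rearrangement. Your route through the tuple $(2,1,1,1)$ plus the symmetry $[t_{2,1}(u),t_{1,1}(v)]=[t_{2,1}(v),t_{1,1}(u)]$ effectively rederives that relation. It costs an extra step but is valid.

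The one thing you leave implicit is the cancellation of $u-v$ when you obtain the symmetry. This is legitimate because $u-v$ is not a zero divisor on $\Y[[u^{-1},v^{-1}]]$. Alternatively, the right side of \eqref{def relation-coeff} for these indices is visibly symmetric in $r$ and $s$. You were right to cancel before setting $v=u-k$: afterwards it would mean dividing by $k$, which is impossible when $p\mid k$.
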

		\begin{proof}
			These follow from the defining relation \eqref{def relation-series}.
			Using the anti-automorphism $\tau$ \eqref{tau-antiauto},
			we see that the first three equations follow immediately from \cite[(4.4)-(4.6)]{CH23-2}.
			To get (4), set $(i,j,k,l)=(1,1,2,1)$ and $u:=v-k$ in \eqref{def relation-series}, simplify, then replace $v$ by $u$.
		\end{proof}
		\begin{Lemma}\label{lemma A-2}
			For any positive integer $n$, we have that
			\begin{align*}
				\Delta(t_{1,1}(u-n+1)&t_{1,1}(u-n+2)\cdots t_{1,1}(u))\\
				=t_{1,1}&(u-n+1)t_{1,1}(u-n+2)\cdots t_{1,1}(u)\otimes t_{1,1}(u)\cdots t_{1,1}(u-n+2)t_{1,1}(u-n+1)\\
				&+n t_{1,2}(u-n+1)t_{1,1}(u-n+2)\cdots t_{1,1}(u)\otimes t_{1,1}(u)\cdots t_{1,1}(u-n+2)t_{2,1}(u-n+1).
			\end{align*}
		\end{Lemma}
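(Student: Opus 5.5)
We will prove Lemma \ref{lemma A-2} by induction on $n$. Write $X_n:=t_{1,1}(u-n+1)\cdots t_{1,1}(u)$ and $X_n^{\rm rev}:=t_{1,1}(u)\cdots t_{1,1}(u-n+1)$. For $n=1$ the statement is just the coproduct formula \eqref{comul and antipode}:
\[
\Delta(t_{1,1}(u))=t_{1,1}(u)\otimes t_{1,1}(u)+t_{1,2}(u)\otimes t_{2,1}(u).
\]
For the inductive step we use $X_{n+1}=t_{1,1}(u-n)X_n$, which rewrites as $X_{n+1}(u)=t_{1,1}(u-n)\,X_n(u)$ with all shifts taken from the same $u$. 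Multiplicativity of $\Delta$, together with the induction hypothesis, then gives the product of
\[
t_{1,1}(u-n)\otimes t_{1,1}(u-n)+t_{1,2}(u-n)\otimes t_{2,1}(u-n)
\]
with $X_n\otimes X_n^{\rm rev}+n\,t_{1,2}(u-n+1)t_{1,1}(u-n+2)\cdots t_{1,1}(u)\otimes t_{1,1}(u)\cdots t_{2,1}(u-n+1)$. The product must be taken with the super sign rule in the tensor product; since $t_{1,2},t_{2,1}$ are odd, the cross terms pick up signs that will have to be tracked.

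Expanding gives four terms.

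\textbf{The main term.} The term $t_{1,1}(u-n)X_n\otimes t_{1,1}(u-n)X_n^{\rm rev}$ must be reordered in the second tensor factor to $X_n^{\rm rev}t_{1,1}(u-n)$. This is harmless because the $t_{1,1}(v)$ commute among themselves (\eqref{bracket dirdjs} with $d_1=t_{1,1}$). The result is exactly $X_{n+1}\otimes X_{n+1}^{\rm rev}$.

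\textbf{The vanishing term.} The term with $t_{1,2}(u-n)t_{1,2}(u-n+1)\cdots$ in the first factor vanishes by Lemma \ref{lemma A-1}(3), applied with $u$ replaced by $u-n+1$.

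\textbf{The two remaining terms.} These are:
\begin{itemize}
\item[(a)] $t_{1,2}(u-n)X_n\otimes t_{2,1}(u-n)X_n^{\rm rev}$;
\item[(b)] $n\,t_{1,1}(u-n)t_{1,2}(u-n+1)t_{1,1}(u-n+2)\cdots\otimes t_{1,1}(u-n)t_{1,1}(u)\cdots t_{2,1}(u-n+1)$.
\end{itemize}

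In (b), we apply Lemma \ref{lemma A-1}(1) at shift $u-n+1$ to turn $t_{1,1}(u-n)t_{1,2}(u-n+1)$ into $t_{1,2}(u-n)t_{1,1}(u-n+1)$. The first factor then becomes $t_{1,2}(u-n)t_{1,1}(u-n+1)\cdots t_{1,1}(u)$, which is the desired first factor for $n+1$. Term (a) already has this first factor.

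So the claim reduces to an identity in the second factor:
\[
t_{2,1}(u-n)X_n^{\rm rev}+n\,t_{1,1}(u-n)\,t_{1,1}(u)\cdots t_{1,1}(u-n+2)\,t_{2,1}(u-n+1)=(n+1)\,t_{1,1}(u)\cdots t_{1,1}(u-n+1)\,t_{2,1}(u-n),
\]
up to the super signs, which should come out consistently because both terms carry one odd element in each factor.

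\textbf{The main obstacle.} Proving this second-factor identity is the hard part. I would try two tools:
\begin{itemize}
\item Lemma \ref{lemma A-1}(2) (at shift $u-n+1$), which gives $t_{1,1}(u-n+1)t_{2,1}(u-n)=t_{2,1}(u-n+1)t_{1,1}(u-n)$.
\item Lemma \ref{lemma A-1}(4), which moves $t_{2,1}(w)$ past $t_{1,1}(w-k)$ at the cost of producing $t_{2,1}(w-k)t_{1,1}(w)$.
\end{itemize}
Concretely, I would move $t_{2,1}(u-n)$ rightward through $t_{1,1}(u),\dots,t_{1,1}(u-n+1)$ in the first term using (4) with the appropriate $k$, and use the commutativity of the $t_{1,1}$'s. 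The coefficient $k+1$ in (4) is the source of the factor $n+1$.

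If this direct bookkeeping becomes messy, the fallback is a secondary induction: prove separately that
\[
t_{2,1}(v)\,t_{1,1}(v+n)\cdots t_{1,1}(v+1)+n\,t_{1,1}(v)\cdots t_{2,1}(v+1)=(n+1)\cdots
\]
by induction on $n$ with (4). Since only integer coefficients appear, the argument is characteristic-free, which is all we need.
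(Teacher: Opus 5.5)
Your reduction is the paper's own: induction on $n$, expanding $\Delta(t_{1,1}(u-n))$ against the inductive expression, killing the $t_{1,2}(u-n)t_{1,2}(u-n+1)$ term by Lemma \ref{lemma A-1}(3), and using Lemma \ref{lemma A-1}(1) to give the two mixed terms the common first factor $t_{1,2}(u-n)t_{1,1}(u-n+1)\cdots t_{1,1}(u)$.

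On signs, state the facts rather than leaving things ``up to super signs''. With $(a\otimes b)(c\otimes d)=(-1)^{|b||c|}ac\otimes bd$, a sign appears only when $b$ and $c$ are both odd. That happens only in the vanishing term, so terms (a) and (b) carry no sign.

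\textbf{The gap.} The second-factor identity is the only non-formal content of the inductive step, and you leave it open. The mechanism you sketch does not work as described. You propose to push $t_{2,1}(u-n)$ rightward through $t_{1,1}(u),\dots,t_{1,1}(u-n+1)$ using (4) with varying $k$. But (4) is a three-term exchange relation, not a commutation rule. Solving it for $t_{2,1}(u-n)t_{1,1}(u-m)$ gives terms in which the arguments of $t_{2,1}$ and $t_{1,1}$ are swapped, so nothing is ``moved past'' anything, and the new terms do not visibly recombine. Your fallback induction is also not actually stated.

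\textbf{The missing idea.} First telescope the other two terms with Lemma \ref{lemma A-1}(2), in the form $t_{1,1}(w)t_{2,1}(w-1)=t_{2,1}(w)t_{1,1}(w-1)$. Applying it successively at $w=u-n+1,\dots,u$ gives
\begin{align*}
t_{1,1}(u)\cdots t_{1,1}(u-n+1)\,t_{2,1}(u-n)&=t_{2,1}(u)\,t_{1,1}(u-1)\cdots t_{1,1}(u-n),\\
t_{1,1}(u)\cdots t_{1,1}(u-n+2)\,t_{2,1}(u-n+1)&=t_{2,1}(u)\,t_{1,1}(u-1)\cdots t_{1,1}(u-n+1).
\end{align*}
Put $R:=t_{1,1}(u-1)\cdots t_{1,1}(u-n+1)$, with $R=1$ if $n=1$. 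Since the $t_{1,1}$'s commute, your identity becomes
\[
\bigl(t_{2,1}(u-n)\,t_{1,1}(u)+n\,t_{1,1}(u-n)\,t_{2,1}(u)\bigr)R=(n+1)\,t_{2,1}(u)\,t_{1,1}(u-n)\,R .
\]
This is exactly Lemma \ref{lemma A-1}(4) with $k=n$, multiplied on the right by $R$. So a single application of (4) suffices once all three terms share the right factor $R$.

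This matches the paper's one-line justification ``(4) taking $k:=n-1$''; the paper's induction runs from $n-1$ to $n$. The exchange relation actually needed is (2). No division occurs, so the argument holds for every $n$ in any characteristic. This matters because the lemma is applied at $n=p$.
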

		\begin{proof}
			We proceed by induction on $n$.
			For $n=1$, this holds by the definition of $\Delta$ \eqref{comul and antipode}.
			So assume that $n\geq 2$ and the statement is true for $n-1$. 
			It follows that
			\begin{align*}
				\Delta\big(t_{1,1}(u-n+1)t_{1,1}(u-n+2)\cdots &t_{1,1}(u)\big)\\ 
				=\,\,\Delta\big(t_{1,1}(u-n+1)\big)\Delta\big(t_{1,1}&(u-n+2)\cdots t_{1,1}(u)\big)\\
				=\,\,\big(t_{1,1}(u-n+1)\otimes&t_{1,1}(u-n+1)+t_{1,2}(u-n+1)\otimes t_{2,1}(u-n+1)\big)\\
				\times\big(t_{1,1}(u-n+2)&\cdots t_{1,1}(u)\otimes t_{1,1}(u)\cdots t_{1,1}(u-n+2)\\
				+(n-1)t_{1,2}&(u-n+2)t_{1,1}(u-n+3)\cdots t_{1,1}(u)\\
				\otimes & t_{1,1}(u)\cdots t_{1,1}(u-n+3)t_{2,1}(u-n+2)\big).
			\end{align*}
			Using Lemma \ref{lemma A-1}(3) and simplifying the above,
			we only need to show that
			\begin{align*}
				n t_{1,2}(u-n+1)t_{1,1}(u-n+2)&\cdots t_{1,1}(u)\otimes t_{1,1}(u)\cdots t_{1,1}(u-n+2)t_{2,1}(u-n+1)\\
				=t_{1,2}(u-n+1)t_{1,1}(u-n+2)&\cdots t_{1,1}(u)\otimes t_{2,1}(u-n+1)t_{1,1}(u)\cdots t_{1,1}(u-n+2)\\
				+ (n-1)t_{1,1}(u-n+1)&t_{1,2}(u-n+2)t_{1,1}(u-n+3)\cdots t_{1,1}(u)\\
				\otimes t_{1,1}(u-n+1)&t_{1,1}(u)\cdots t_{1,1}(u-n+3)t_{2,1}(u-n+2).
			\end{align*}
			Moreover,  
			Lemma \ref{lemma A-1}(1) implies that all the left sides of the above tensor products are equal.
			It remains to check that 
			\begin{align*}
				nt_{1,1}(u)&\cdots t_{1,1}(u-n+2)t_{2,1}(u-n+1)\\
				=t_{2,1}&(u-n+1)t_{1,1}(u)\cdots t_{1,1}(u-n+2)\\
				&+ (n-1) t_{1,1}(u-n+1)t_{1,1}(u)\cdots t_{1,1}(u-n+3)t_{2,1}(u-n+2).
			\end{align*}
			This follows from Lemma \ref{lemma A-1}(1) and (4) taking $k:=n-1$.
		\end{proof}
		\begin{Proposition}{(=Proposition \ref{prop: delta send p to p})}
			We have
			\begin{align*}
				\Delta(b_i(u))=b_i(u)\otimes b_i(u).
			\end{align*}
			In particular, $\Delta(I_p)\subseteq I_p \otimes \Y+\Y\otimes I_p$.
		\end{Proposition}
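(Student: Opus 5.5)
For $i=1$ I will apply Lemma \ref{lemma A-2} with $n=p$. Since $d_1(u)=t_{1,1}(u)$ by \eqref{t11t22 drinfeld} and the $d_1^{(r)}$ commute by \eqref{bracket dirdjs}, we have $b_1(u)=t_{1,1}(u-p+1)t_{1,1}(u-p+2)\cdots t_{1,1}(u)$, and the product may be written in any order. Lemma \ref{lemma A-2} then gives
\[
\Delta(b_1(u))=b_1(u)\otimes b_1(u)+p\,(\cdots)\otimes(\cdots).
\]
Here I have reordered the second tensor factor of the first term using commutativity of the $t_{1,1}$'s. The coefficient $p$ is zero in $\kk$, so the second term vanishes and $\Delta(b_1(u))=b_1(u)\otimes b_1(u)$.

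For $i=2$ I will reduce to the case $i=1$ by a symmetry. We have $b_2(u)=\prod_{j=0}^{p-1}t'_{2,2}(u-j)$, because $d_2(u)^{-1}=t'_{2,2}(u)$ by the Gauss decomposition. I will use the standard fact that $T(u)\mapsto T(u)^{-1}$, composed with the transposition/parity-swap symmetry, defines an (anti)automorphism $\omega$ of $\Y$. This $\omega$ is compatible with $\Delta$ up to the flip $P$ of tensor factors, in the sense that $\Delta\circ\omega=(\omega\otimes\omega)\circ\Delta^{\rm op}$ with the appropriate super signs. It also sends $t_{1,1}(u)$ to $t'_{2,2}(u)$ after the index swap $i\mapsto 3-i$, as with $\tau$ in \eqref{tau-antiauto}. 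Concretely, I would use the anti-automorphism $t_{i,j}(u)\mapsto t'_{3-j,3-i}(u)$ or a variant obtained by composing $S$ with $\tau$. Applying this map to the identity $\Delta(b_1(u))=b_1(u)\otimes b_1(u)$ yields $\Delta(b_2(u))=b_2(u)\otimes b_2(u)$. The flip is harmless here, since both tensor factors are equal and even.

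Should this symmetry argument be awkward to set up, the alternative is a direct route: prove analogues of Lemmas \ref{lemma A-1} and \ref{lemma A-2} for $t'_{2,2}$. Because $T(u)^{-1}$ satisfies the RTT relation with $u,v$ interchanged and the opposite coproduct $\Delta(t'_{i,j}(u))=\sum_k (-1)^{\cdots}t'_{k,j}(u)\otimes t'_{i,k}(u)$, the same induction produces
\[
\Delta\Bigl(\prod_{j} t'_{2,2}(u-j)\Bigr)=\prod_j t'_{2,2}(u-j)\otimes\prod_j t'_{2,2}(u-j)+n(\cdots)\otimes(\cdots),
\]
and again $n=p$ kills the correction term.

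The "in particular" then follows immediately. By Remark \ref{Remark: generators of p-center} and \eqref{generators of p center}, $I_p$ is the ideal generated by the coefficients $b_i^{(r)}$ with $r>0$, and comparing coefficients in $\Delta(b_i(u))=b_i(u)\otimes b_i(u)$ gives
\[
\Delta(b_i^{(r)})=\sum_{s=0}^{r}b_i^{(s)}\otimes b_i^{(r-s)},
\]
where $b_i^{(0)}=1$. Every summand has at least one factor lying in $I_p$, so $\Delta(b_i^{(r)})\in I_p\otimes\Y+\Y\otimes I_p$. Since $\Delta$ is an algebra homomorphism, the same containment holds for all of $\Delta(I_p)$.

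The main obstacle is the case $i=2$, specifically getting the signs right in the symmetry, or in the inverse-matrix coproduct formula for the odd entries $t'_{1,2}$ and $t'_{2,1}$. I would first try to verify directly that $S\circ\tau$, or a similar composite, interchanges $t_{1,1}$ and $t'_{2,2}$ and intertwines $\Delta$ with $\Delta^{\rm op}$.
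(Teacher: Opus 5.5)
Your proposal is correct and follows the paper's proof: the case $i=1$ is Lemma \ref{lemma A-2} with $n=p$, and for $i=2$ the composite $S\circ\tau$ you suggest is exactly the paper's automorphism $\zeta: t_{i,j}(u)\mapsto t'_{3-i,3-j}(u)$ (Gow's Proposition 1). That map satisfies $\Delta=(\zeta\otimes\zeta)\circ P\circ\Delta\circ\zeta$ and sends $b_1(u)$ to $b_2(u)$. Your derivation of the ``in particular'' from Remark \ref{Remark: generators of p-center} and $\Delta(b_i^{(r)})=\sum_{s}b_i^{(s)}\otimes b_i^{(r-s)}$ correctly spells out a step the paper leaves implicit.
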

		\begin{proof}
			Recall from \eqref{t11t22 drinfeld} that
			$t_{1,1}(u)=d_1(u)$ ,
			so that $b_1(u)=t_{1,1}(u)\cdots t_{1,1}(u-p+1)$.
			Taking in Lemma \ref{lemma A-2} $n:=p$ gives
			\[
			\Delta(b_1(u))=b_1(u)\otimes b_1(u). 
			\]
			We now consider the case $i=2$.
			In view of \cite[Proposition 1]{Gow07},
			the map
			\[
			\zeta:\Y\rightarrow \Y;~t_{i,j}(u)\mapsto t'_{3-i,3-j}(u)
			\]
			gives an algebra isomorphism.
			It is easy to verify that $\zeta(t'_{i,j}(u))=t_{3-i,3-j}(u)$.
			Moreover,
			let $P:\Y\otimes \Y\rightarrow \Y\otimes \Y$ be the map given by 
			\[
			P(x\otimes y)=y\otimes x (-1)^{|x||y|}
			\]
			for all homogeneous elements $x,y\in\Y$.
			Then we have $\Delta=(\zeta\otimes \zeta)\circ P\circ \Delta\circ\zeta$ (cf. \cite[Proposition 2]{Gow07}).
			As $t'_{2,2}(u)=d_2(u)^{-1}$ (cf. \cite[(14)]{Gow07}),
			we obtain
			\begin{align*}
				\Delta(b_2(u))=&\Delta(t'_{2,2}(u)\cdots t'_{2,2}(u-p+1))\\
				=&(\zeta\otimes \zeta)\circ P\circ \Delta\circ\zeta(t'_{2,2}(u)\cdots t'_{2,2}(u-p+1))\\
				=&(\zeta\otimes \zeta)\circ P\circ \Delta (b_1(u)).
			\end{align*}
			Remember that we already proved $\Delta(b_1(u))=b_1(u)\otimes b_1(u)$.
			Observing that $\zeta(b_1(u))=b_2(u)$,
			we thus obtain $\Delta(b_2(u))=b_2(u)\otimes b_2(u)$,
			as desired.
			
		\end{proof}
		
		\bigskip
		\noindent
		\textbf{Acknowledgment.}
		This work is supported by the Natural Science Foundation of Hubei Province (No. 2025AFB716).
		
	\end{document}